\documentclass[11pt]{article}
\usepackage[margin=1in]{geometry}
\usepackage[T1]{fontenc}
\usepackage[utf8]{inputenc}
\usepackage{lmodern,microtype}
\usepackage{amsmath,amssymb,amsthm,mathtools,bm,mathrsfs}
\usepackage{aliascnt}
\usepackage{booktabs,array,longtable,enumitem}
\usepackage{xcolor}
\usepackage[colorlinks=true,linkcolor=blue!42!black,citecolor=blue!42!black,urlcolor=blue!42!black]{hyperref}
\usepackage[nameinlink,capitalize,noabbrev]{cleveref}
\newtheorem{theorem}{Theorem}[section]
\newaliascnt{proposition}{theorem}
\newtheorem{proposition}[proposition]{Proposition}
\aliascntresetthe{proposition}
\newaliascnt{lemma}{theorem}
\newtheorem{lemma}[lemma]{Lemma}
\aliascntresetthe{lemma}
\newaliascnt{corollary}{theorem}
\newtheorem{corollary}[corollary]{Corollary}
\aliascntresetthe{corollary}
\theoremstyle{definition}
\newaliascnt{definition}{theorem}

\aliascntresetthe{definition}
\newaliascnt{remark}{theorem}
\newtheorem{remark}[remark]{Remark}
\aliascntresetthe{remark}

\newcommand{\R}{\mathbb R}
\newcommand{\Pp}{\mathbb P}

\newcommand{\cE}{\mathcal E}
\newcommand{\cA}{\mathcal A}
\newcommand{\Qop}{\mathcal Q}
\newcommand{\Rres}{\mathcal R}

\title{Axisymmetric Navier--Stokes with Swirl: Regularity and Blow-Up Constraints}
\author{Rishad Shahmurov\\\small Cellular Products Research and Development, Roswell, Georgia, USA}
\date{}

\begin{document}
\maketitle

\begin{abstract}
For smooth finite-energy axisymmetric Navier--Stokes solutions with swirl,
we give a continuation criterion in terms of the signed cumulative work
of the actual post-cut source-generated meridional current, with quartic
viscous dissipation counted once. In the stated strong Sobolev class the
criterion is necessary and sufficient and requires neither a global
circulation bound nor spatial moments. We also prove cumulative inward-compression criteria, collect
quantitative necessary conditions at a first singular time, and give
a complete-history mixed-residual test for noncontinuation.
Auxiliary results quantify quartic dissipation,
energy-funded source bounds, and causal source--receiver identities.
An explicit smooth whole-space datum has positive newborn source
production.
\end{abstract}

\medskip
\noindent\textit{2020 Mathematics Subject Classification.} 35Q30, 76D05, 35B44, 35B65.

\noindent\textit{Key words and phrases.} Axisymmetric Navier--Stokes, swirl,
regularity, circulation, blow-up constraints.

\section{Introduction}\label{sec:intro}
We consider the incompressible Navier--Stokes equations
\begin{equation}\label{eq:NS}
 \partial_tu-\nu\Delta u+(u\cdot\nabla)u+\nabla p=0,
 \qquad \nabla\cdot u=0
 \quad\text{in }\mathbb R^3\times(0,T),
\end{equation}
for smooth finite-energy axisymmetric data,
$u=u^re_r+u^\theta e_\theta+u^ze_z$.
The no-swirl case is globally regular
\cite{Ladyzhenskaya1968,UkhovskiiYudovich1968}; global regularity with
arbitrary swirl is open.

The circulation $\Gamma=ru^\theta$, the meridional vorticity variable
$G=\omega^\theta/r$, and $U=u^r/r$ obey a coupled system in which the
source $\partial_z[(u^\theta/r)^2]$ generates the meridional current.
A criterion based solely on the size of that current loses its signed
interaction with the swirl. Our first result instead uses its complete
post-cut history, including the viscous cost. The second result quantifies
cumulative inward compression; the third is a finite-prefix test for
noncontinuation.

We compare our criteria with classical circulation and critical-regularity
results \cite{ChaeLee2002,LeiZhang2011,LeiZhang2017,Wei2016,ChenFangZhang2017},
Type-I and Liouville theorems \cite{CSTY2008,KNSS2009,SereginSverak2009},
and recent radial and Euler-length conditions
\cite{Zhang2026,CIVEulerLength2026}.
The auxiliary estimates quantify radial and axial quartic dissipation,
energy-funded source norms, and exact causal source--receiver identities.
The newborn example rules out a universal negative sign for generated
source production.

\subsection*{Axisymmetric variables and equations}
We suppress the angular factor $2\pi$ and use $d\mu=r\,dr\,dz$ for the physical axisymmetric measure. The five-dimensional operator and lifted measure are
\[
 \Delta_5=\partial_r^2+\frac3r\partial_r+\partial_z^2,
 \qquad dm_5=r^3dr\,dz.
\]
The coupled vorticity estimates use $d\mu$; lifted pairings use $dm_5$ when indicated.
With $F=u^\theta/r$ and $G=\omega^\theta/r$, one has
\begin{align}
 (\partial_t+b\cdot\nabla-\nu\Delta_5)F&=-2UF,\label{eq:F}\\
 (\partial_t+b\cdot\nabla-\nu\Delta_5)G&=\partial_z(F^2),\label{eq:G}
\end{align}
where $b=u^re_r+u^ze_z$ and $U=u^r/r$.  Write the five-dimensional stream function as
\[
 -\Delta_5\psi=G,\qquad u^r=-r\psi_z,\qquad u^z=2\psi+r\psi_r,
\]
so $U=-\psi_z$.  The circulation $\Gamma=r^2F=ru^\theta$ solves
\begin{equation}\label{eq:Gamma}
 \Gamma_t+b\cdot\nabla\Gamma
 =\nu\bigl(\Gamma_{rr}-r^{-1}\Gamma_r+\Gamma_{zz}\bigr),
\end{equation}
and satisfies the circulation maximum principle whenever the initial circulation bound is finite.

\subsection*{Main results}
Throughout the main results, let $u_0\in H^m(\R^3)$, $m\ge3$, be
smooth, divergence free and axisymmetric, let $T<\infty$, and let
$u$ denote its strong solution on $[0,T)$. Fix a regular cut $a<T$
whenever required. Set
\[
 s(t)=\log\frac{T-a}{T-t},\qquad
 M(t)=\|u^\theta(t)\|_\infty,\qquad
 \Gamma_*=\|ru_0^\theta\|_\infty.
\]
For nonzero swirl define
\begin{equation}\label{eq:quartic-dissipation-def}
\begin{aligned}
 V_4(t)&=\int_{\R_+\times\R}|u^\theta(r,z,t)|^4\,d\mu,\\
 D_4(t)&=3\int |u^\theta|^2|\nabla u^\theta|^2\,d\mu
       +\int\frac{|u^\theta|^4}{r^2}\,d\mu.
\end{aligned}
\end{equation}
When $u^\theta(a)\not\equiv0$, split $G=G_{\rm old}+G_{\rm src}$
under the same actual meridional drift, with
$G_{\rm old}(a)=G(a)$ and $G_{\rm src}(a)=0$. Write
\[
 \sigma_{\rm src}(t)=-\frac1{V_4(t)}
       \int U_{\rm src}(t)|u^\theta(t)|^4\,d\mu,
 \qquad
 N_a(t)=\int_a^t\left(\sigma_{\rm src}(q)
                -\nu\frac{D_4(q)}{V_4(q)}\right)dq.
\]
For the cumulative compression statement, at almost every time choose
a derivative-realizing maximum point $x_*(t)$ of $|u^\theta|$ and put
$\sigma(t)=-U(x_*(t),t)$. The first theorem assumes no global
circulation bound or spatial moment; the second explicitly assumes
$0<\Gamma_*<\infty$.

\begin{theorem}\label{thm:signed-source-intro}
For any regular cut $a<T$ with $u^\theta(a)\not\equiv0$, smooth
continuation through $T$ is equivalent to
\begin{equation}\label{eq:signed-source-intro}
 \int_a^t e^{8N_a(q)}\,dq
 =o\!\left(\log\frac{T-a}{T-t}\right)\quad(t\uparrow T).
\end{equation}
\end{theorem}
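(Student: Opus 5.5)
The plan is to derive an exact evolution identity for $V_4$ and read off both directions of the equivalence from it. Test the equation for $u^\theta$ against $|u^\theta|^2u^\theta$ with respect to $d\mu$. The physical swirl equation is
\[
\partial_t u^\theta+b\cdot\nabla u^\theta-\nu\bigl(\Delta-r^{-2}\bigr)u^\theta=-Uu^\theta .
\]
Here we use $u^ru^\theta/r=Uu^\theta$. Since $b$ is divergence free with respect to $d\mu$, the transport term integrates to zero. The viscous term yields exactly $-\nu D_4$ with the stated constants $3$ and $1$: $\nabla(|u|^2u)\cdot\nabla u=3|u|^2|\nabla u|^2$, and the $r^{-2}$ term gives $|u|^4/r^2$. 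The boundary terms vanish by finite energy and the $H^m$ regularity. This gives
\[
\tfrac14\tfrac{d}{dt}V_4=-\int U|u^\theta|^4\,d\mu-\nu D_4 .
\]
Now split $U=U_{\rm old}+U_{\rm src}$, where the two pieces come from the linear split $G=G_{\rm old}+G_{\rm src}$ through $-\Delta_5\psi=G$. This gives
\[
\tfrac{d}{dt}\log V_4=4\bigl(\sigma_{\rm src}-\nu D_4/V_4\bigr)+4\sigma_{\rm old},
\]
with $\sigma_{\rm old}=-V_4^{-1}\int U_{\rm old}|u^\theta|^4\,d\mu$. Integrating from $a$ gives
\[
V_4(t)^2=V_4(a)^2\,e^{8N_a(t)}\,e^{8\int_a^t\sigma_{\rm old}} .
\]
So the hypothesis is, up to the old factor, a statement about $\int_a^t V_4^2$ compared with $s(t)$. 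Here $V_4(a)>0$ because $u^\theta(a)\not\equiv0$, so the logarithm is defined on $[a,T)$.

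\textbf{Step 1: the old part is harmless.} $G_{\rm old}$ solves the homogeneous drift–diffusion equation $(\partial_t+b\cdot\nabla-\nu\Delta_5)G_{\rm old}=0$ with smooth data $G(a)$. Since $b$ is divergence free for $dm_5$, every $L^p(dm_5)$ norm of $G_{\rm old}$ is nonincreasing, uniformly in the drift. Through the Biot–Savart bound for $U=-\psi_z$ (for instance $\|U_{\rm old}\|_\infty\lesssim\|G_{\rm old}\|_{L^{p}}^{\theta}\|G_{\rm old}\|_{L^{q}}^{1-\theta}$ with $p<5<q$, dimension $5$), this gives $\sup_{[a,T)}\|U_{\rm old}\|_\infty<\infty$. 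Hence $|\int_a^t\sigma_{\rm old}|\le C(T-a)$, and $e^{8N_a}$ is comparable to $V_4^2/V_4(a)^2$ up to constants depending only on the cut.

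\textbf{Step 2: necessity.} If the solution continues smoothly through $T$, then $V_4$ and $D_4$ stay bounded on $[a,T]$. Indeed, $u^\theta$ is bounded and in $L^2$, and the axis term $|u^\theta|^4/r^2$ is controlled by $|\nabla u^\theta|^2\|u^\theta\|_\infty^2$ via Hardy. Therefore $e^{8N_a}$ is bounded, its integral is $O(T-t_0)=O(1)$, and this is $o(s(t))$ because $s(t)\to\infty$.

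\textbf{Step 3: sufficiency, the main obstacle.} Suppose blow-up at $T$. I want a lower bound $\int_a^t V_4^2\,dq\gtrsim s(t)$ along $t\uparrow T$, which contradicts the hypothesis by Step 1. My first attempt is to combine the $V_4$ identity with the known fact that $\|u^\theta\|_{L^4(d\mu)}$ is a borderline quantity. The plan is to show that $V_4^2$ bounded in $L^1$ on average at the rate $o(\log)$ forces a Prodi–Serrin type control of $u^\theta$. That control in turn closes the $G$ equation (using the source $\partial_z(F^2)$ energy estimate, i.e. the energy-funded source bounds mentioned in the abstract) and yields regularity.

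Concretely, I would use the local-in-time estimate from the $H^m$ theory: if $V_4(t_0)$ is finite, the solution survives on an interval of length $c\,V_4(t_0)^{-2}$ (the scaling-consistent lifespan for this norm). Near a blow-up time this forces $V_4(t)^2\ge c/(T-t)$. Integrating gives $\int_a^tV_4^2\ge c\,s(t)-C$, which contradicts the $o(s)$ hypothesis. The delicate points are two. The first is checking that $V_4$ with the weight $d\mu$ really has the critical lifespan $(T-t)^{-1/2}$; this is a scaling check using the fact that $d\mu$ is two-dimensional, so $|u^\theta|^4$ has dimension matching $u^\theta\in L^4(\R^3)$ scaled by $1/r$. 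The second is verifying that the blow-up rate lower bound holds without a circulation bound. If the pure $V_4$ lifespan fails, I would fall back on a lifespan in terms of the combined quantity $\|u^\theta\|_{L^4}$ plus $\|G\|_{L^2}$, controlling the latter by the source energy identity over the same interval.
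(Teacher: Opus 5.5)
Your quartic balance, the factorization $V_4(t)^2=V_4(a)^2e^{8N_a(t)}e^{8\int_a^t\sigma_{\rm old}}$, the treatment of the inherited current, and the necessity direction all follow the paper's argument. There is one slip in Step 1. The drift $b$ is divergence free for $d\mu=r\,dr\,dz$, not for $dm_5$; with respect to $dm_5$ its divergence is $2U$, so the $L^p(dm_5)$ norms of $G_{\rm old}$ are not monotone. Use the physical $L^p(d\mu)$ norms instead. These do contract, because the extra $\tfrac2r\partial_r$ in $\Delta_5$ produces a favorable axis term. Combine them with the maximum principle and a physical Hodge bound, or, as the paper does, with the $L^2$ energy plus Hodge--Agmon, which gives $\int_a^T\|U_{\rm old}\|_\infty<\infty$.

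The genuine gap is Step 3. The lifespan bound $T-t_0\ge c\,V_4(t_0)^{-2}$ does not follow from the $H^m$ theory. That lifespan depends on $\|u(t_0)\|_{H^m}$, in particular on the meridional vorticity $G(t_0)$, which $V_4$ does not see. Scaling consistency is only a necessary condition. A pointwise rate $V_4(t)^2\ge c/(T-t)$ at \emph{every} time near $T$ would be a Leray-type lower bound for the swirl alone, stronger than the angular Prodi--Serrin criterion. It is not a known result, and nothing in your proposal proves it. Heuristically, the inherited or newly generated current can compress swirl through $-4\int U|u^\theta|^4\,d\mu$ on time scales unrelated to $V_4$.

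Your fallback does not close either. No source energy identity controls $\|G\|_2$: the source is $\partial_z(F^2)=-2FJ$ with $J=\omega^r/r$, and $J$ is itself driven by $\nabla U$. What is missing is the paired vorticity estimate for $Y=\|J\|_2^2+\|G\|_2^2$,
\[
 Y'+\nu Z\le C_4\nu^{-7}V_4^2\,Y.
\]
It comes from H\"older, the $L^4$ Hodge bound $\|\nabla U\|_4\lesssim\|G\|_4$, the weighted estimate for $\|r^{-1/2}f\|_{8/3}$, and Young with exponents $8$ and $8/7$. The rest of the sufficiency argument then runs as follows:
\begin{enumerate}
\item Set $H=C_4\nu^{-7}\int_a^tV_4^2$, which is $o(s)$ under your hypothesis. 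The integrating factor bounds both $Y$ and $\nu\int_a^tZe^{-H}$.
\item The interpolation $\|U\|_\infty\lesssim Y^{1/4}Z^{1/4}$ and H\"older in the logarithmic time $s$ give $\int_a^t\|U\|_\infty=o(s)$.
\item The swirl maximum principle then yields $\|u^\theta(t)\|_\infty\le C_\varepsilon(T-t)^{-\varepsilon}$.
\item Chen--Fang--Zhang applies with $u^\theta\in L^2_tL^\infty_x$.
\end{enumerate}
Quoting the critical $L^8_tL^4_x$ angular criterion directly would not suffice. The condition $\int_a^tV_4^2=o(s)$ still allows $\int_a^TV_4^2=\infty$.
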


\begin{theorem}\label{thm:cumulative-compression-intro}
Assume $0<\Gamma_*<\infty$. If
\begin{equation}\label{eq:intro-cumulative-compression}
 \limsup_{t\uparrow T}
 \frac{\displaystyle\int_a^t\sigma(q)\,dq}{s(t)}
 <\frac12+\frac{\nu^2}{6\Gamma_*^2},
\end{equation}
then the solution continues smoothly through $T$. At the endpoint
$c_*=\frac12+\frac{\nu^2}{6\Gamma_*^2}$, it is also sufficient that
\begin{equation}\label{eq:cstar-log}
 \int_a^{t(s)}\sigma(q)\,dq
 \le c_*s+d\log(1+s)+C,
 \qquad 0\le d\le\frac{1}{18(\Gamma_*/\nu)^2}.
\end{equation}
\end{theorem}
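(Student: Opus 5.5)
We argue via the growth of $M(t)=\|u^\theta(t)\|_\infty$ and a continuation criterion stated in terms of $M$. The cumulative compression $\int\sigma$ then controls $\log M$. The plan has four steps.

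\textbf{Step 1: Lagrangian inequality for $M$.} We start from the $u^\theta$ equation in physical variables,
\[
\partial_t u^\theta+b\cdot\nabla u^\theta-\nu\Bigl(\Delta-\tfrac1{r^2}\Bigr)u^\theta=-Uu^\theta .
\]
At a derivative-realizing maximum point $x_*(t)$ of $|u^\theta|$, the transport term vanishes. The diffusion term $\nu\Delta u^\theta\,\mathrm{sgn}\,u^\theta$ is nonpositive, and the zeroth-order term contributes $-\nu|u^\theta|/r_*^2$. By Danskin's lemma we therefore get, for a.e.\ $t$,
\[
\frac{d}{dt}\log M\le \sigma(t)-\frac{\nu}{r_*(t)^2}.
\]
The circulation maximum principle gives $M r_*\le\Gamma_*$, so $r_*^{-2}\ge M^2/\Gamma_*^2$. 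Writing $y=\log M$, this yields
\[
y'\le\sigma-\frac{\nu}{\Gamma_*^2}e^{2y}.
\]

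\textbf{Step 2: continuation criterion and comparison.} We use the known Type-I-type regularity criterion for axisymmetric flows with bounded circulation: if $M(t)\le C(T-t)^{-1/2}$ with a sufficiently small constant (depending on $\nu,\Gamma_*$), the solution continues. Equivalently, $y\le \tfrac12 s+\text{const}$ with a small constant suffices, and a slowly growing logarithmic correction can be absorbed. The extra threshold $\nu^2/(6\Gamma_*^2)$ should come from a sharper version of this criterion. We set $y=\tfrac12 s+w$ and convert to the $s$ variable using $dq=(T-a)e^{-s}ds$, so that $e^{2y}\,dq$ becomes $(T-a)e^{2w}\,ds$. The damping term then acts on $w$ in $s$-time with strength proportional to $e^{2w}$. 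Integrating $y'\le\sigma-\dots$ and inserting the hypothesis $\int\sigma\le (c_*-\varepsilon)s$ gives
\[
w(s)\le \Bigl(c_*-\tfrac12-\varepsilon\Bigr)s-\frac{\nu(T-a)}{\Gamma_*^2}\int e^{2w}\,ds.
\]
A Riccati/Bihari comparison then bounds $w$. If $w$ were large on a long $s$-interval, the damping integral would exceed the linear gain. Hence $e^{2w}$ has $s$-average at most of order $\Gamma_*^2(c_*-\tfrac12)/\nu$, which turns the averaged slope condition into a pointwise bound.

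\textbf{Step 3: main obstacle.} The hardest step is to make the constant $\nu^2/(6\Gamma_*^2)$ match exactly. That requires combining the averaged bound on $e^{2w}$ with a quantitative criterion, presumably $\limsup(T-t)M^2<$ an explicit multiple of $\nu$ derived from the quartic dissipation $D_4$ and $\Gamma_*$. The factor $6$ suggests using $D_4\ge\int |u^\theta|^4/r^2\,d\mu$ together with the $3|u^\theta|^2|\nabla u^\theta|^2$ term through a Hardy-type inequality. My first attempt is to run Step 1 in the $L^4$-averaged form, using $\frac{d}{dt}\log V_4\le 4\sigma_{\text{eff}}-4\nu D_4/V_4$ (the $V_4$ analogue of the $N_a$ quantity in Theorem~\ref{thm:signed-source-intro}). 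I would then calibrate the Hardy constant against $\Gamma_*$ before passing to $M$.

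\textbf{Step 4: endpoint case.} For \eqref{eq:cstar-log}, the same comparison gives $w\le d\log(1+s)+C'$, that is, $M^2(T-t)\lesssim(1+s)^{2d}$. The restriction $d\le 1/(18(\Gamma_*/\nu)^2)$ is exactly what is needed for the logarithmic loss to be absorbed by the dissipative margin in the endpoint version of the criterion. I would verify this by an iteration in which each unit of $s$ costs at most a factor $e^{2d/(1+s)}$, and then sum.
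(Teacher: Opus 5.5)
You have a genuine gap: your Bernoulli comparison yields only an averaged bound, and you then try to close with a pointwise criterion that this averaged information cannot feed.

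\textbf{What Steps 1--2 actually give.} These steps match the paper's start: the Danskin inequality $D^+M\le\sigma M-\nu\Gamma_*^{-2}M^3$, the logarithmic clock, and a Bernoulli/Bihari comparison. Your remark that a long excursion of $w$ would make the damping integral beat the linear gain is correct. But what it proves is only an \emph{averaged} statement,
\[
 \limsup_{s\to\infty}\frac1s\,\nu^{-1}\int_a^{t(s)}M(q)^2\,dq
 \le\Bigl(\frac{\Gamma_*}{\nu}\Bigr)^2\Bigl(\bar c-\frac12\Bigr)_+,
 \qquad \bar c=\limsup_{t\uparrow T}\frac{\int_a^t\sigma}{s(t)} .
\]
It does not become a pointwise bound on $(T-t)M^2$. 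Your inequality has the form $w(s)\le\alpha s+C-K\int_0^s e^{2w}\,d\rho$, and you have only an upper bound on the damping integral, never a lower one. Since $\sigma$ may be negative for long stretches and then spike while $\int_a^t\sigma$ stays below $cs$, the differential inequality alone cannot exclude isolated large excursions of $(T-t)M^2$. The same objection defeats Step 4: $w\le d\log(1+s)+C'$ does not follow.

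\textbf{Why your closing routes miss.} Closing with a small-constant pointwise Type-I criterion cannot work, and $c_*$ does not come from a Hardy calibration of $D_4$. The $L^4$ route is also unusable. There $\sigma_4$ is the $\pi_4$-average of $-U$, whereas your hypothesis controls $-U$ only at the swirl maximum.

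\textbf{The missing ingredient.} You need a continuation criterion that consumes exactly this averaged quantity, namely \cref{thm:cumulative-M-intro}. It says that $\limsup\nu^{-1}\int_a^tM^2/s<\frac16$, or the endpoint bound $\nu^{-1}\int_a^tM^2\le\frac s6+\frac1{18}\log(1+s)+C$, implies continuation.

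\emph{How it is proved.} It comes from the coupled energy of $J=\omega^r/r$ and $G=\omega^\theta/r$, not from $D_4$.
\begin{enumerate}
\item For $Y=\|J\|_2^2+\|G\|_2^2$ the interactions are bounded by $3M\sqrt{YZ}$, so $Y'+\nu Z\le 9\nu^{-1}M^2Y$.
\item With the integrating factor $e^{-H}$, where $H=9\nu^{-1}\int_a^tM^2\le\frac32s+\frac12\log(1+s)+C$, one gets $Y\lesssim(T-t)^{-3/2}(1+s)^{1/2}$ and a weighted time-integral bound on $Z$.
\item The interpolation $\|U\|_\infty\le CY^{1/4}Z^{1/4}$ and H\"older give $\int_a^t\|U\|_\infty=o(s)$.
\item The swirl maximum principle then gives $M\le C_\varepsilon(T-t)^{-\varepsilon}$, so $u^\theta\in L^2_tL^\infty_x$, and Chen--Fang--Zhang concludes.
\end{enumerate}

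\emph{How the constants fall out.} With $\beta=2\nu^2/\Gamma_*^2$, the condition $\frac1\beta(2\bar c-1)<\frac16$ is exactly $\bar c<c_*$. At the endpoint, your hypothesis gives $B(s)\le(2c_*-1)s+2d\log(1+s)+C$. The Bernoulli bound then gives the logarithmic coefficient $(\Gamma_*/\nu)^2d\le\frac1{18}$, matching $9\cdot\frac1{18}=\frac12$.
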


For the finite-prefix criterion, a \emph{complete approximating history}
$V_n$ on $[0,t_n]$ is smooth, globally divergence free, has
finite-energy decay, and satisfies
\[
 V_n\in C([0,t_n];L^2(\R^3))
       \cap L^2(0,t_n;\dot H^1(\R^3)),
\]
with the whole-space relative-energy calculation justified. Define
\[
 \mathcal R[V_n]=\partial_tV_n-\nu\Delta V_n
  +\Pp((V_n\cdot\nabla)V_n)=R_{a,n}+R_{b,n}
\]
and its mixed error
\[
 \varepsilon_n=\|V_n(0)-u_0\|_2
 +\|R_{a,n}\|_{L^1(0,t_n;L^2)}
 +\nu^{-1/2}\|R_{b,n}\|_{L^2(0,t_n;\dot H^{-1})}.
\]

\begin{theorem}\label{thm:mixed-prefix-intro}
Let $t_n\uparrow T$, $\ell_n\downarrow0$, and let $V_n$ be
complete approximating histories as above. Suppose
$\varepsilon_n=o(\nu\sqrt{\ell_n})$ and there are centers
$x_n\in\R^3$, one fixed $\Phi\in C_c^\infty(\R^3;\R^3)$,
and $c_0>0$ such that
\begin{equation}\label{eq:mixed-prefix-intro}
 \left|\frac{\ell_n}{\nu}\int_{\R^3}
     V_n(t_n,x_n+\ell_n y)\cdot\Phi(y)\,dy\right|\ge c_0
 \qquad\text{for all }n.
\end{equation}
Then the strong solution from $u_0$ does not continue smoothly through
$T$.
\end{theorem}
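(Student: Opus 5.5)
We prove the contrapositive: if the strong solution $u$ from $u_0$ continued smoothly through $T$, then \eqref{eq:mixed-prefix-intro} would fail for large $n$. The strategy is a weak--strong (relative energy) stability estimate comparing $V_n$ with $u$ on $[0,t_n]$, followed by a local rescaled pairing bound.

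\textbf{Step 1: bounds on $u$ if it continues.} Suppose $u$ continues smoothly to $[0,T]$. Then $u\in L^\infty(0,T;H^m)$ with $m\ge3$. By Sobolev embedding, $\|\nabla u\|_{L^\infty(0,T;L^\infty)}\le K$ and $\|u\|_{L^\infty_{t,x}}\le K$ for a constant $K$ independent of $n$.

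\textbf{Step 2: relative energy inequality.} Set $w_n=V_n-u$ on $[0,t_n]$. Subtracting the equations gives
\[
 \partial_tw_n-\nu\Delta w_n+\Pp\bigl((w_n\cdot\nabla)u+(V_n\cdot\nabla)w_n\bigr)=R_{a,n}+R_{b,n}.
\]
Test with $w_n$; this is the whole-space relative-energy calculation assumed justified in the definition of a complete history. The transport term $\int (V_n\cdot\nabla)w_n\cdot w_n$ vanishes because $V_n$ is divergence free. The stretching term is bounded by $K\|w_n\|_2^2$. We then handle the two residuals:
\[
 \langle R_{a,n},w_n\rangle\le\|R_{a,n}\|_2\|w_n\|_2,\qquad
 \langle R_{b,n},w_n\rangle\le \tfrac{1}{2\nu}\|R_{b,n}\|_{\dot H^{-1}}^2+\tfrac\nu2\|\nabla w_n\|_2^2.
\]
Absorbing the gradient term and applying Gronwall in the form $y\,y'\le Ky^2+\|R_a\|y+\cdots$ gives
\[
 \sup_{[0,t_n]}\|w_n\|_2\le e^{KT}\Bigl(\|w_n(0)\|_2+\|R_{a,n}\|_{L^1L^2}+C\nu^{-1/2}\|R_{b,n}\|_{L^2\dot H^{-1}}\Bigr)\le C_Te^{KT}\varepsilon_n .
\]
This is the main obstacle. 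We must check that the whole-space integrations by parts (the vanishing transport term, and the pairing of $R_b$ with $w_n$ through $\dot H^1$) are legitimate for the finite-energy class. The definition provides exactly $V_n\in C L^2\cap L^2\dot H^1$ with justified relative energy, and $u$ is smooth and bounded, so these steps hold. The Gronwall constant is uniform in $n$ because $t_n<T$.

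\textbf{Step 3: rescaled pairing bound.} Change variables $x=x_n+\ell_n y$ to get
\[
 \frac{\ell_n}{\nu}\int V_n(t_n,x_n+\ell_ny)\cdot\Phi(y)\,dy=\frac{\ell_n^{-2}}{\nu}\int V_n(t_n,x)\cdot\Phi\Bigl(\frac{x-x_n}{\ell_n}\Bigr)dx .
\]
We split $V_n=u+w_n$ and bound each piece.

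For the $u$ part, use $\|u\|_\infty\le K$. It contributes at most $\ell_n\nu^{-1}K\|\Phi\|_{L^1}\to0$.

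For the $w_n$ part, apply Cauchy--Schwarz together with $\|\Phi((\cdot-x_n)/\ell_n)\|_2=\ell_n^{3/2}\|\Phi\|_2$. It contributes at most
\[
 \nu^{-1}\ell_n^{-1/2}\|\Phi\|_2\|w_n(t_n)\|_2\le C\|\Phi\|_2\,\frac{\varepsilon_n}{\nu\sqrt{\ell_n}}\to0,
\]
by the hypothesis $\varepsilon_n=o(\nu\sqrt{\ell_n})$.

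Hence the left side of \eqref{eq:mixed-prefix-intro} tends to $0$, which contradicts the lower bound $c_0>0$. Therefore $u$ cannot continue smoothly through $T$. Neither axisymmetry nor the theorems above are needed here; only the smoothness of $u$ on the closed interval is used.
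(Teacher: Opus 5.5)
Your proposal is correct and follows the paper's proof step for step. Assuming continuation, you derive the relative-energy/Gronwall bound $\sup_{[0,t_n]}\|V_n-u\|_2\le C\varepsilon_n$. You then split the rescaled detector into an error part, bounded by $\|\Phi\|_2\|V_n(t_n)-u(t_n)\|_2/(\nu\sqrt{\ell_n})=o(1)$, and a smooth-reference part, bounded by $\ell_n\nu^{-1}\|\Phi\|_1\sup\|u\|_\infty=o(1)$. There are two cosmetic differences. The paper controls the stretching term with the one-sided strain rate $\kappa_u$ instead of $\|\nabla u\|_\infty$. Also, the quadratic inequality for $\sup\|V_n-u\|_2$ really gives a factor $2$ in front of $\|R_{a,n}\|_{L^1(0,t_n;L^2)}$, which your constant $C_T$ absorbs anyway.
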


The quantitative restrictions on a first singular time are collected
in \cref{thm:necessary-blowup}. They concern compression at a swirl
maximum, angular and quartic accumulation, source inflow, and signed
quartic-receiver work, each under the corresponding continuation
criterion's hypotheses.

The proofs of the cumulative swirl and quartic-dose criteria appear in
\cref{sec:exits}. Source-inflow, source-enstrophy, and free-current
estimates follow in \cref{sec:current-quartic}; the later sections
develop source-memory bounds and the collected necessary conditions.

\section{Cumulative regularity criteria}\label{sec:exits}

\begin{theorem}\label{thm:cumulative-M-intro}
If
\begin{equation}\label{eq:intro-cumulative-M}
 \limsup_{t\uparrow T}
 \frac{\nu^{-1}\displaystyle\int_a^t M(q)^2\,dq}{s(t)}<\frac16,
\end{equation}
then the solution continues through $T$.  The endpoint estimate
\[
 \nu^{-1}\int_a^tM(q)^2\,dq
 \le \frac{s(t)}6+\frac1{18}\log(1+s(t))+C
\]
is also sufficient.
\end{theorem}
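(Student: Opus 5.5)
We prove the statement by an $L^4$-type energy estimate for the swirl $u^\theta$ with a Grönwall factor controlled by $\nu^{-1}\int M^2$, and then feed the resulting bound into a subcritical regularity criterion for $u^\theta$.

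\textbf{Step 1: the $V_4$ identity.} The swirl component satisfies
\[
 \partial_tu^\theta+b\cdot\nabla u^\theta
 =\nu\Bigl(\Delta-\frac1{r^2}\Bigr)u^\theta-Uu^\theta .
\]
Testing with $|u^\theta|^2u^\theta$ in $d\mu$ gives
\[
 \frac14\frac{d}{dt}V_4+\nu D_4\le\int |U|\,|u^\theta|^4\,d\mu .
\]
Here $D_4$ is exactly the quartic dissipation of \eqref{eq:quartic-dissipation-def}: the constant $3$ comes from $\nabla(|u^\theta|^2u^\theta)\cdot\nabla u^\theta$.

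\textbf{Step 2: bounding the stretching term.} We absorb the stretching term into $\nu D_4$ plus a multiple of $\nu^{-1}M^2V_4$. Write $U=-\psi_z$ with $-\Delta_5\psi=G$. Integrating by parts in $z$ moves the derivative onto $|u^\theta|^4$, so we need a bound for $\int\psi\,|u^\theta|^3\partial_zu^\theta$. Alternatively, use the $G$ equation \eqref{eq:G}, whose source is $\partial_z(F^2)$. Its contribution to $U$ is quadratic in $F$ and yields terms of the form $\int |u^\theta|^2|\nabla u^\theta|\,|u^\theta|^2/r$. Cauchy--Schwarz then gives
\[
 \int |u^\theta|^2|\nabla u^\theta|\,\frac{|u^\theta|^2}{r}
 \le \nu D_4/2+C\nu^{-1}M^2V_4 .
\]
The target is the sharp inequality
\[
 \frac{d}{dt}V_4\le \frac{4\cdot c}{\nu}\,M^2V_4+(\text{lower order}),
\]
with $c$ such that the exponent is $8c\,\nu^{-1}\int M^2$ and $8c\cdot\frac16\cdot s$ has the right size. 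The exact threshold $1/6$ suggests that after Grönwall
\[
 V_4(t)\lesssim \exp\Bigl(6\nu^{-1}\int_a^tM^2\Bigr)\le C\Bigl(\frac{T-a}{T-t}\Bigr)^{1-\delta}.
\]
This is strictly below the critical rate $(T-t)^{-1}$.

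\textbf{Step 3: closing.} Since $\int_a^T V_4\,dt<\infty$ under the strict hypothesis, $u^\theta\in L^4_tL^4_x$. In axisymmetric form this is a Prodi--Serrin type condition ($2/4+3/4<\ldots$ after using $|u^\theta|\le \Gamma_*/r$ is not available here). We therefore instead aim for $u^\theta\in L^q_tL^p_x$ with $2/q+3/p<1$, via interpolation with the energy. Equivalently, invoke the known criterion that $u^\theta$ in a subcritical mixed space implies regularity, as in \cite{ChaeLee2002,Wei2016}. At the endpoint, the extra factor $\exp(6\cdot\frac1{18}\log(1+s))=(1+s)^{1/3}$ enters. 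We then need
\[
 \int_a^T (T-t)^{-1}\,(1+s(t))^{1/3}e^{-s}\ldots
\]
to be finite. In the $s$ variable this reduces to integrability of $e^{-\epsilon s}(1+s)^{1/3}$, or, if the rate is exactly critical, to a log-refined criterion. This is where the constant $1/18$ must be matched.

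\textbf{Main obstacle.} The hard step is Step 2: obtaining the sharp constant linking $\int U|u^\theta|^4$ to $\nu^{-1}M^2V_4$ and $D_4$ without using $\Gamma_*$. This requires a Hardy-type inequality in $dm_5$ for $\psi$ driven by the source $F^2$. We would first try splitting $G$ into its old and source parts as in the definition of $N_a$. The old part is handled by regularity up to the cut $a$, and the source part by the weighted Hardy inequality $\int f^2/r^2\,d\mu\le\ldots$ with the optimal constant.
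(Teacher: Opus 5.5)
Your proposal has a genuine gap, and it is in the mechanism rather than in the constants. Step~2 fails. The meridional strain $U=-\partial_z(-\Delta_5)^{-1}G$ is nonlocal and depends on the history of the flow: $G$ carries the inherited current $G(a)$, is transported by $b$, and is fed by $\partial_z(F^2)$. So at a fixed time there is no inequality of the form $\int U|u^\theta|^4\,d\mu\le\frac\nu2D_4+C\nu^{-1}M^2V_4$, and the phrase ``its contribution to $U$ is quadratic in $F$'' has no pointwise-in-time meaning. This is exactly why the paper's $V_4$-based criterion, \cref{thm:signed-source-intro}, has to carry the whole post-cut source history $N_a$ plus a separately bounded old-current term. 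Your Cauchy--Schwarz display does not give what you claim either. Since $\int|u^\theta|^6r^{-2}\,d\mu\le M^2\int|u^\theta|^4r^{-2}\,d\mu\le M^2D_4$, it yields only $\lesssim MD_4$. That can be absorbed under a smallness condition $M\lesssim\nu$, but it never becomes a Gr\"onwall factor $\nu^{-1}M^2V_4$; the exponent $6$ is read off from the threshold, not derived.

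Step~3 fails even if Step~2 is granted. Up to the factor $2\pi$, $V_4=\|u^\theta\|_4^4$ has critical rate $(T-t)^{-1/2}$, not $(T-t)^{-1}$. The angular Prodi--Serrin endpoint at $L^4_x$ is $L^8_t$, i.e.\ $\int V_4^2\,dt<\infty$ (compare \cref{thm:quartic-dose-intro}). A bound $V_4\lesssim(T-t)^{-(1-\delta)}$ gives only $L^4_tL^4_x$, which is supercritical ($\frac24+\frac34>1$), and interpolation with the energy cannot repair the scaling.

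The missing idea is to let $M$ enter a linear energy estimate only as a coefficient. The paper works with the coupled pair $J=\omega^r/r$, $G=\omega^\theta/r$ and $Y=\|J\|_2^2+\|G\|_2^2$. The swirl appears only in $I_J=\int u^\theta(J_zU_r-J_rU_z)\,d\mu$ and $I_G=-2\int(u^\theta/r)JG\,d\mu$. These are bounded by $M\sqrt{YZ}$ and $2M\sqrt{YZ}$ using $\|\nabla U\|_2\le\|G\|_2$ and the half-weight Hardy inequality, so $Y'+\nu Z\le9\nu^{-1}M^2Y$ (\cref{lem:JG}).

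The threshold is then $9\cdot\frac16=\frac32$, exactly the exponent that makes $(T-t)\|U\|_\infty\lesssim(T-t)Y^{1/4}Z^{1/4}$ scale invariant. The integrating factor gives $Y\lesssim(T-t)^{-3/2}(1+s)^{1/2}$ and a weighted $L^1(ds)$ bound on $Z$, and H\"older then gives $\int_a^t\|U\|_\infty=o(s)$. The endpoint $9\cdot\frac1{18}=\frac12$ is exactly the logarithmic loss this H\"older step tolerates (\cref{lem:weighted-endpoint}). The swirl maximum principle gives $M(t)\le M(a)\exp\int_a^t\|U\|_\infty\le C_\varepsilon(T-t)^{-\varepsilon}$, hence $u^\theta\in L^2_tL^\infty_x$. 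The angular criterion of Chen--Fang--Zhang then closes the argument, with no bound on $V_4$ needed.
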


\begin{theorem}\label{thm:quartic-dose-intro}

Let $C_4<\infty$ be the explicit constant defined in \eqref{eq:C4-def}.  If
\[
 \limsup_{t\uparrow T}
 \frac{\nu^{-7}\displaystyle\int_a^tV_4(q)^2\,dq}{s(t)}
 <\frac{3}{2C_4},
\]
then the solution continues through $T$.  In particular,
\[
 \int_a^tV_4(q)^2\,dq=o(s(t))
\]
is sufficient.
\end{theorem}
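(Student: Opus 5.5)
We prove \cref{thm:quartic-dose-intro} by a quartic energy inequality for $u^\theta$, followed by Gronwall in the logarithmic time $s$, and then by reducing to \cref{thm:cumulative-M-intro} or to a known $L^4$-type criterion for the swirl. The argument has four steps.

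\textbf{Step 1: quartic energy identity.} The swirl satisfies
\[
 \partial_tu^\theta+b\cdot\nabla u^\theta-\nu\Bigl(\Delta-\frac1{r^2}\Bigr)u^\theta=-Uu^\theta .
\]
Multiplying by $|u^\theta|^2u^\theta$ and integrating against $d\mu$ gives
\[
 \frac14\frac{d}{dt}V_4+\nu D_4=-\int U|u^\theta|^4\,d\mu .
\]
The factor $3$ in $D_4$ arises exactly from $\nabla(|u^\theta|^2u^\theta)\cdot\nabla u^\theta=3|u^\theta|^2|\nabla u^\theta|^2$. The $1/r^2$ term gives the second part of $D_4$.

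\textbf{Step 2: bound the stretching term.} We estimate $\int|U||u^\theta|^4$ through the coupled structure. Roughly, $U=-\psi_z$ is controlled by $G$, and $G$ is driven by $\partial_z(F^2)$ with $F=u^\theta/r$. An energy estimate on $G$ together with Hardy/Sobolev inequalities in the lifted measure $dm_5$ should give
\[
 \int|U||u^\theta|^4\,d\mu\le \frac{\nu}{2}D_4+\frac{C_4}{\nu^{7}}V_4^2\,\cdot\,(\text{a quantity controlled by }V_4\text{ or energy}).
\]
The goal is the form
\[
 \frac{d}{dt}V_4\le \frac{4C_4}{\nu^7}V_4^2\cdot V_4 ,
\]
that is, $\frac{d}{dt}\log V_4\le 4C_4\nu^{-7}V_4^2$, with $C_4$ as in \eqref{eq:C4-def}. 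I would take whatever precise inequality \eqref{eq:C4-def} encodes. Presumably it is a Gagliardo--Nirenberg/Hardy constant combined with the $G$-energy estimate, which is where $\nu^{-7}$ emerges by scaling: $V_4^2$ has dimensions $\nu^{8}\cdot$length$^{-2}$ times time$^{-1}$. This step is the main obstacle. One must couple the $G$ equation (whose source is $\partial_z(F^2)$, quadratic in swirl) to the quartic quantity without losing the viscous dissipation, and close the estimate with the exact constant.

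\textbf{Step 3: Gronwall in logarithmic time.} Integrating gives
\[
 \log\frac{V_4(t)}{V_4(a)}\le 4C_4\nu^{-7}\int_a^tV_4^2\,dq .
\]
The hypothesis makes the right side at most $(6-\delta)\,s(t)$ for $t$ near $T$. Hence
\[
 V_4(t)\lesssim (T-t)^{-(6-\delta)}\cdot(\ldots),
\]
and after matching exponents this becomes a subcritical growth rate.

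\textbf{Step 4: conclude continuation.} Combining with the circulation and maximum-principle bounds, subcritical growth of $V_4$ yields, via interpolation, a bound on $\int M^2\,dq$ of the type in \eqref{eq:intro-cumulative-M} with constant below $1/6$. \cref{thm:cumulative-M-intro} then gives continuation. Alternatively, one can feed the bound directly into a Serrin-type criterion for $u^\theta$.

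The ``in particular'' clause is immediate, since $o(s)$ has limsup ratio $0<3/(2C_4)$.
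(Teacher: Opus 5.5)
\textbf{There is a genuine gap.} It sits where you flagged the main obstacle. The inequality you aim for in Step 2, $\frac{d}{dt}\log V_4\le 4C_4\nu^{-7}V_4^2$, is false, not merely unproven. The same holds for any bound of $\int|U||u^\theta|^4\,d\mu$ by $\frac\nu2 D_4$ plus $V_4^2$ times a quantity that does not blow up as the swirl amplitude goes to zero.

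By your own Step 1, $\frac{d}{dt}\log V_4=4\sigma_4-4\nu D_4/V_4$ with $\sigma_4=-\int U|u^\theta|^4\,d\mu/V_4$. At a fixed instant, this quantity is unchanged if you replace $u^\theta$ by $\epsilon u^\theta$ while keeping $G$, hence $U=-\partial_zP_5G$, fixed. The right-hand side, however, scales like $\epsilon^8$. The field $U$ is determined by the meridional vorticity, which carries the inherited current and the source history, not by the current swirl. So you can:
\begin{itemize}
\item choose $G$ with $U<0$ on the support of the swirl profile;
\item scale $G$ up until $\sigma_4>\nu D_4/V_4$;
\item then let $\epsilon\to0$.
\end{itemize}
This shows there is no closed differential inequality for $V_4$ alone.

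Steps 3--4 would not rescue the argument even if Step 2 held. Your Gronwall step turns the hypothesis into $V_4\lesssim(T-t)^{-(6-\delta)}$, which is far from subcritical. The angular endpoint $L^8_tL^4_x$ corresponds to $V_4\lesssim(T-t)^{-1/2}$, and the hypothesis already gives that rate on average. Your output is therefore weaker than your input, and no matching of exponents makes it subcritical. Step 4 also has two problems:
\begin{itemize}
\item It invokes circulation bounds, which this theorem does not assume. The paper uses it without any circulation bound inside the proof of \cref{thm:signed-source-intro}.
\item It needs an $L^4\to L^\infty$ interpolation for $M$, and no derivative control is available for that.
\end{itemize}
Note too that the hypothesis allows $\int_a^TV_4^2=\infty$ at a logarithmic rate, so the direct $L^8_tL^4_x$ criterion cannot be applied.

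\textbf{The missing idea} is to put $V_4$ in the \emph{coefficient} of a Gronwall inequality for the meridional vorticity energy, rather than estimating $V_4$ itself. In the paper, set $Y=\|J\|_2^2+\|G\|_2^2$ with $J=\omega^r/r$.
\begin{enumerate}
\item The interactions $I_J=\int u^\theta(J_zU_r-J_rU_z)\,d\mu$ and $I_G=-2\int(u^\theta/r)JG\,d\mu$ are bounded by $K_0V_4^{1/4}Y^{1/8}Z^{7/8}$. This uses H\"older, the $L^4$ Hodge bound, Gagliardo--Nirenberg, and the weighted estimate for $\|r^{-1/2}f\|_{8/3}$.
\item Young's inequality with exponents $8/7$ and $8$ gives $Y'+\nu Z\le C_4\nu^{-7}V_4^2Y$. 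This is exactly where $C_4=7^7K_0^8/4^8$ comes from.
\item The threshold $3/(2C_4)$ is precisely the condition $H\le\frac32 s$ in \cref{lem:weighted-endpoint}. That is the critical growth $Y\lesssim(T-t)^{-3/2}$.
\item Combining this with $\|U\|_\infty\le CY^{1/4}Z^{1/4}$ and H\"older in $s$ gives $\int_a^t\|U\|_\infty=o(s)$.
\item The swirl maximum principle then gives $M\le C_\varepsilon(T-t)^{-\varepsilon}$, and Chen--Fang--Zhang yields continuation.
\end{enumerate}

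Your Step 1 identity and the ``in particular'' clause are correct.
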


Put $A(t)=\|(-U(t))_+\|_\infty$. In the coupled vorticity
energy estimates only, $J=\omega^r/r$; the centrifugal source in
\cref{sec:memory} is denoted $j=\partial_z(F^2)$.

\begin{lemma}\label{lem:JG}
For
\[
 Y(t)=\|J(t)\|_2^2+\|G(t)\|_2^2
\]
with the physical axisymmetric measure, there is a nonnegative dissipation $Z(t)$ such that
\begin{equation}\label{eq:JG}
 Y'(t)+\nu Z(t)\le \frac9\nu M(t)^2Y(t).
\end{equation}
\end{lemma}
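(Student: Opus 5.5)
We derive energy identities for $J=\omega^r/r$ and $G=\omega^\theta/r$ from their transport--diffusion equations. The dissipation $Z$ will collect the positive viscous terms.

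\textbf{Equations.} Start from the axisymmetric vorticity equations
\[
(\partial_t+b\cdot\nabla)\omega^r-\nu\bigl(\Delta-r^{-2}\bigr)\omega^r=(\omega^r\partial_r+\omega^z\partial_z)u^r,
\]
\[
(\partial_t+b\cdot\nabla)\omega^\theta-\nu\bigl(\Delta-r^{-2}\bigr)\omega^\theta=\frac{u^r}{r}\,\omega^\theta+\partial_z\frac{(u^\theta)^2}{r}.
\]
Dividing by $r$ and using $\omega^r=-\partial_zu^\theta$, $\omega^z=r^{-1}\partial_r(ru^\theta)$ gives
\[
(\partial_t+b\cdot\nabla-\nu(\Delta+\tfrac2r\partial_r))J=(\omega^r\partial_r+\omega^z\partial_z)U,
\qquad
(\partial_t+b\cdot\nabla-\nu(\Delta+\tfrac2r\partial_r))G=-2\frac{u^\theta}{r}J,
\]
where the $G$-source follows from $\partial_z[(u^\theta)^2/r^2]=2F\partial_zF=-2(u^\theta/r)J$.

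\textbf{Energy pairing.} Multiply each equation by $J$ (resp.\ $G$) and integrate against $d\mu$. Two facts make the pairing clean.
\begin{itemize}
\item The drift term vanishes, because $\nabla\cdot(rb)=0$ in the $(r,z)$ variables.
\item The operator $\Delta+\tfrac2r\partial_r=\partial_r^2+\tfrac3r\partial_r+\partial_z^2$ is $\Delta_5$. Pairing it in $d\mu$ yields $-\|\nabla J\|_2^2$ plus an axis boundary term $-\int_{\R}J^2|_{r=0}\,dz\le0$, since $\int \tfrac2r J\partial_rJ\,r\,dr=[J^2]_0^\infty$.
\end{itemize}
Hence
\[
Z=2\bigl(\|\nabla J\|_2^2+\|\nabla G\|_2^2\bigr)+2\int(J^2+G^2)\big|_{r=0}dz\ge0.
\]

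\textbf{Bounding the sources.} For the $G$ source, integrate by parts in $z$:
\[
-2\int\frac{u^\theta}{r}JG\,d\mu=2\int\frac{(u^\theta)^2}{r^2}\,\partial_zG\,d\mu\cdot\tfrac12\quad\text{(up to sign)},
\]
which moves the derivative onto $G$ and keeps only $(u^\theta)^2$, bounded by $M^2$. The $J$ source $\int(\omega^r\partial_rU+\omega^z\partial_zU)J\,d\mu$ is harder. The plan is to integrate by parts so that derivatives fall on $J$ rather than $U$, using $\omega^r=-\partial_zu^\theta$, $\omega^z=\partial_ru^\theta+u^\theta/r$. This produces the form
\[
\int u^\theta\bigl(\partial_zU\,\partial_rJ-\partial_rU\,\partial_zJ\bigr)d\mu\ \text{plus lower-order terms},
\]
all of the shape $\int u^\theta\,\nabla U\cdot\nabla J$ and $\int u^\theta\,\nabla U\,J/r$. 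The divergence-free structure of $(\omega^r,\omega^z)$ helps here, since $\partial_r(r\omega^r)+\partial_z(r\omega^z)=0$.

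\textbf{Closing.} Then $|\text{sources}|\le CM(\|\nabla U\|_2+\ldots)\|\nabla J\|_2$. We need the elliptic estimate $\|\nabla U\|_2=\|\nabla\partial_z\psi\|_2\le\|G\|_2$. This comes from pairing $-\Delta_5\partial_z\psi=\partial_zG$ with $\partial_z\psi$ in the 5D measure, together with the Hardy-type bound $\|\partial_r\partial_z\psi\|\le\|G\|$ in $d\mu$. Young's inequality $CM\|G\|\|\nabla J\|\le\frac{\nu}{2}\|\nabla J\|^2+\frac{C^2}{2\nu}M^2\|G\|^2$ then absorbs half the dissipation into $\nu Z$ and gives the bound $\frac{c}{\nu}M^2Y$.

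\textbf{Main obstacle.} The hardest part is the $J$-equation nonlinearity: proving $\int|\nabla U|^2d\mu\le\int G^2d\mu$ with the right measure, which amounts to an $L^2$-boundedness of the axisymmetric Riesz-type transform. The constant $9$ then results from the bookkeeping: three source pieces, each bounded via Young with $\nu/6$-fractions of the dissipation. Since the statement only requires some nonnegative $Z$, these halves can be discarded freely.
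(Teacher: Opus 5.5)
\textbf{There is a genuine gap in your treatment of the $G$-source term.} Integrating $I_G=-2\int (u^\theta/r)JG\,d\mu=\int\partial_z(F^2)\,G\,d\mu$ by parts in $z$ gives $-\int (u^\theta)^2r^{-2}\partial_zG\,d\mu$. Replacing $(u^\theta)^2$ by $M^2$ then leaves
\[
 M^2\int r^{-2}|\partial_zG|\,d\mu=M^2\int|\partial_zG|\,\frac{dr\,dz}{r}.
\]
This diverges logarithmically at the axis whenever $\partial_zG(0,\cdot)\not\equiv0$, because the bound discards the $O(r^2)$ vanishing of $(u^\theta)^2$. Even where it is finite, it is linear in $G$ and quadratic in $M$. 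So no Young inequality can turn it into $\tfrac\nu2Z+\tfrac C\nu M^2Y$; your closing step only handles products like $M\|G\|_2\|\nabla J\|_2$.

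The missing ingredient is the half-weight Hardy inequality
\[
 \int\frac{f^2}{r}\,d\mu=\int f^2\,dr\,dz=-2\int f\,\partial_rf\,d\mu\le2\|f\|_2\|\partial_rf\|_2 ,
\]
valid for smooth decaying axisymmetric $f$ since $rf^2$ vanishes on the axis. Do not integrate by parts. Keep $I_G=-2\int (u^\theta/r)JG\,d\mu$ and use $|u^\theta|\le M$ only once. Then apply Cauchy--Schwarz with weight $1/r$, followed by this inequality for $J$ and for $G$. With $Z\ge\|\nabla J\|_2^2+\|\nabla G\|_2^2$ this gives $\int|JG|\,r^{-1}\,d\mu\le\sqrt{YZ}$, hence $|I_G|\le 2M\sqrt{YZ}$.

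The rest of your plan is repairable and simpler than you expect. In the $J$-source there are no lower-order terms. Use $\omega^r=-\partial_zu^\theta$ and $r\omega^z=\partial_r(ru^\theta)$, and integrate by parts once in $z$ and once in $r$; the axis term $ru^\theta U_zJ$ vanishes. The mixed derivatives $U_{rz}$ then cancel, giving exactly $I_J=\int u^\theta(J_zU_r-J_rU_z)\,d\mu$, so $|I_J|\le M\|\nabla J\|_2\|\nabla U\|_2$.

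The bound $\|\nabla U\|_2\le\|G\|_2$ is not a Riesz-transform problem. Since $\Delta_5U=\partial_zG$, pair with $U$ in $d\mu$ (not in $dm_5$) and reuse your own axis computation for $\Delta_5$. This gives $\|\nabla U\|_2^2+\int U(0,z)^2\,dz=\int U_zG\,d\mu\le\|\nabla U\|_2\|G\|_2$.

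Normalizing $Z$ as $\|\nabla J\|_2^2+\|\nabla G\|_2^2$ plus the axis terms, you then get $\tfrac12Y'+\nu Z\le 3M\sqrt{YZ}\le\tfrac\nu2Z+\tfrac{9}{2\nu}M^2Y$, which is the stated inequality. Your count of three units charged $\nu/6$ each is correct only once $I_G$ is bounded by $2M\sqrt{YZ}$ through the Hardy step.
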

\begin{proof}
The axisymmetric Hodge estimate gives $\|\nabla U\|_2\le \|G\|_2$.  We also use the half-weight Hardy estimate
\[
 \int \frac{f^2}{r}\,d\mu=\int f^2\,dr\,dz
 =-2\int f f_r\,d\mu
 \le2\|f\|_2\|f_r\|_2,
\]
valid for smooth decaying axisymmetric $f$; the boundary term $rf^2$ vanishes at the axis.  Cauchy--Schwarz with weight $1/r$, followed by this estimate for $J$ and $G$, gives
\[
 \int\frac{|JG|}{r}\,d\mu\le \sqrt{YZ},
\]
after the corresponding gradient terms are included in the nonnegative dissipation $Z$.

Testing the $J$ and $G$ equations by $J$ and $G$ gives
\[
 \frac12Y'+\nu Z\le I_J+I_G.
\]
Writing $v=u^\theta$, integration by parts in the $J$ interaction yields
\[
 I_J=\int v\,(J_zU_r-J_rU_z)\,d\mu,
\]
and hence
\[
 |I_J|\le M\|\nabla J\|_2\|\nabla U\|_2\le M\sqrt{YZ}.
\]
The swirl source in the $G$ equation is
\[
 I_G=-2\int FJG\,d\mu=-2\int\frac{vJG}{r}\,d\mu,
\qquad |I_G|\le2M\sqrt{YZ}.
\]
Therefore
\[
 \frac12Y'+\nu Z\le3M\sqrt{YZ}
 \le\frac\nu2Z+\frac{9}{2\nu}M^2Y,
\]
which is equivalent to \eqref{eq:JG}.
\end{proof}

The coupled $J,G$ estimate gives the cumulative criteria stated above.  We prove them first, then record stronger pointwise corollaries for comparison with the literature.

\begin{lemma}\label{lem:weighted-endpoint}
Assume
\[
 Y'(t)+\nu Z(t)\le f(t)Y(t),\qquad f\ge0,
\]
and put $H(t)=\int_a^t f(q)\,dq$ and $s=\log((T-a)/(T-t))$.  If
\begin{equation}\label{eq:weighted-endpoint-H}
 H(t)\le \frac32s+\frac12\log(1+s)+C
\end{equation}
for all sufficiently late $t$, then
\[
 \int_a^t\|U(q)\|_\infty\,dq=o(s),
\]
and the solution continues through $T$.
\end{lemma}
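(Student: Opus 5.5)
We integrate the differential inequality, convert the integrated dissipation into a bound on $\int\|U\|_\infty$, and then invoke a standard continuation criterion.

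\emph{Step 1: Grönwall.} Since $f\ge0$ and $Z\ge0$, the inequality gives
\[
 Y(t)\le Y(a)e^{H(t)}\le C\,e^{3s/2}(1+s)^{1/2}.
\]
Because $e^{s}=(T-a)/(T-t)$, this reads $Y(t)\lesssim (T-t)^{-3/2}(1+s)^{1/2}$. Multiplying the inequality by the integrating factor $e^{-H}$ and integrating from $a$ gives
\[
 \nu\int_a^t e^{-H(q)}Z(q)\,dq\le Y(a).
\]
Equivalently,
\[
 \int_a^t Z(q)(T-q)^{3/2}(1+s(q))^{-1/2}\,dq\le C.
\]

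\emph{Step 2: a pointwise bound on $U$.} We need an axisymmetric interpolation of the form
\[
 \|U\|_\infty\lesssim \|G\|_2^{1/4}\|\nabla G\|_2^{3/4}.
\]
This is a Sobolev/Hodge estimate in the five-dimensional lift: $U=-\psi_z$ with $-\Delta_5\psi=G$. The scaling in $d\mu$ is three-dimensional, matching $\|\nabla U\|_{L^3}\lesssim\|\nabla^2 U\|_2^{1/2}\|\nabla U\|_2^{1/2}$-type bounds. Combining this with the Hodge bound $\|\nabla U\|_2\le\|G\|_2$ and $\|\nabla G\|_2^2\le Z$, we get
\[
 \|U\|_\infty\lesssim Y^{1/8}Z^{3/8}.
\]

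\emph{Step 3: the time integral.} By Hölder,
\[
 \int_a^t\|U\|_\infty\,dq\lesssim\int_a^t Y^{1/8}Z^{3/8}\,dq
 \le\Bigl(\int_a^t Zw\,dq\Bigr)^{3/8}\Bigl(\int_a^t Y^{1/5}w^{-3/5}\,dq\Bigr)^{5/8},
\]
with weight $w=(T-q)^{3/2}(1+s)^{-1/2}$. The first factor is bounded by Step 1. For the second, Step 1 gives
\[
 Y^{1/5}w^{-3/5}\lesssim (T-q)^{-3/10-9/10}(1+s)^{1/10+3/10}=(T-q)^{-6/5}(1+s)^{2/5}.
\]
This is not integrable, so the exponents from Step 2 must be rebalanced, which is the main obstacle. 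The sharp version should be arranged so that the combination is exactly $(T-q)^{-1}$ times a power of $(1+s)$. In the variable $s$, where $dq=(T-q)\,ds$, such an integrand yields
\[
 \Bigl(\int_0^s (1+\sigma)^{\alpha}\,d\sigma\Bigr)^{\beta}.
\]
The goal is that the $\log(1+s)$ coefficient $\tfrac12$ produces an exponent making the total $o(s)$. Concretely, I would try the interpolation $\|U\|_\infty\lesssim\|\nabla U\|_2^{1/3}\|\nabla G\|_2^{2/3}$, adjusted to the scaling with the $J$ contribution absorbed in $Z$. Then Hölder with exponents $3$ and $3/2$ gives
\[
 \int_a^t\|U\|_\infty\le\Bigl(\int Zw\Bigr)^{1/3}\Bigl(\int Y^{1/4}w^{-1/2}\Bigr)^{2/3}.
\]
Here $Y^{1/4}w^{-1/2}\lesssim (T-q)^{-3/8-3/4}\cdots$, which again does not match. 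So I would instead split dyadically in $s$: on each unit $s$-interval, use $Y\le e^{H}$ at the left endpoint together with the local dissipation integral. Each block should then contribute $O\bigl((1+s)^{-\epsilon}\bigr)$ or an $o(1)$ average, which sums to $o(s)$. The factor $\tfrac32$ is exactly the scaling-critical growth of $Y$, and the $\tfrac12\log(1+s)$ correction is what keeps the blockwise contribution decaying.

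\emph{Step 4: continuation.} Once $\int_a^t\|U\|_\infty\,dq=o(s)$ is known, apply it in the $F$ equation \eqref{eq:F} via the maximum principle:
\[
 \|F(t)\|_\infty\le\|F(a)\|_\infty e^{2\int_a^t\|U\|_\infty}=O\bigl((T-t)^{-\epsilon}\bigr)\quad\text{for every }\epsilon>0.
\]
Then $M\le r\|F\|_\infty$ combined with the circulation bound, or a direct $\Gamma$ interpolation, gives
\[
 \nu^{-1}\int_a^t M^2\,dq=o(s).
\]
Lemma \ref{lem:JG} then makes $Y$ grow only like $e^{o(s)}$. A standard Prodi–Serrin / Beale–Kato–Majda type bootstrap (vorticity $\omega^\theta=rG$, $\omega^r=rJ$ controlled) then excludes blow-up at $T$.
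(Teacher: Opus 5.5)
There is a genuine gap in your Steps 2--3. Step 1 agrees with the paper. The interpolation $\|U\|_\infty\lesssim\|G\|_2^{1/4}\|\nabla G\|_2^{3/4}$, and hence $\|U\|_\infty\lesssim Y^{1/8}Z^{3/8}$, is false. Under $u_\lambda(x,t)=\lambda u(\lambda x,\lambda^2t)$ one has $\|U_\lambda\|_\infty=\lambda^2\|U\|_\infty$, $\|G_\lambda\|_2=\lambda^{3/2}\|G\|_2$ and $\|\nabla G_\lambda\|_2=\lambda^{5/2}\|\nabla G\|_2$. So $\|G\|_2^{\alpha}\|\nabla G\|_2^{1-\alpha}$ scales like $\lambda^{5/2-\alpha}$, and only $\alpha=\frac12$ can hold. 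Your second candidate $\|\nabla U\|_2^{1/3}\|\nabla G\|_2^{2/3}$ fails the same test. The non-integrable weight $(T-q)^{-6/5}$ you found is a symptom of this, not an exponent-balancing problem. Your closing claim that dyadic blocks each contribute $o(1)$ has no estimate behind it.

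The missing ingredient is the scaling-correct Hodge--Agmon bound that the paper uses:
\[
 \|U\|_\infty\le C\|\nabla U\|_2^{1/2}\|\nabla^2U\|_2^{1/2}
 \le C\|G\|_2^{1/2}\|\nabla G\|_2^{1/2}\le CY^{1/4}Z^{1/4}.
\]
With it, your first H\"older setup closes verbatim, now with exponents $4$ and $\frac43$. The second factor becomes $Y^{1/3}w^{-1/3}\lesssim(T-q)^{-1}(1+s)^{1/3}$, which is $(1+s)^{1/3}\,ds$ in the $s$ variable. In the paper's normalization, $z(s)=\nu(T-t)^{5/2}(1+s)^{-1/2}Z(t(s))\in L^1(0,\infty)$ and $(T-t)\|U\|_\infty\le C(1+s)^{1/4}z^{1/4}$. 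Then
\[
 \int_L^S(1+s)^{1/4}z^{1/4}\,ds
 \le\Bigl(\int_L^\infty z\,ds\Bigr)^{1/4}\Bigl(\int_L^S(1+s)^{1/3}\,ds\Bigr)^{3/4}
 =O(S)\Bigl(\int_L^\infty z\,ds\Bigr)^{1/4}.
\]
Letting $S\to\infty$ and then $L\to\infty$ gives $o(s)$. This is exactly where the coefficient $\frac12$ of $\log(1+s)$ is critical.

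Your Step 4 also needs repair. Going through $F$ and then to $M$ requires $\Gamma_*<\infty$, via $|u^\theta|^2=|F|\,|\Gamma|$; the bound $M\le r\|F\|_\infty$ is not uniform in $r$. The lemma has no circulation hypothesis. Through \cref{thm:quartic-dose-intro} it is used in the proof of \cref{thm:signed-source-intro}, where no circulation bound is available. Instead, apply the maximum principle directly to the $u^\theta$ equation, whose $-\nu r^{-2}u^\theta$ term has the favorable sign. This gives $M(t)\le M(a)\exp\int_a^t\|U\|_\infty\le C_\varepsilon(T-t)^{-\varepsilon}$ for every $\varepsilon>0$. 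Choosing $\varepsilon<\frac12$ gives $u^\theta\in L^2_tL^\infty_x$ near $T$, and the angular Prodi--Serrin criterion of Chen--Fang--Zhang yields continuation. Your unspecified bootstrap through \cref{lem:JG} and a BKM-type argument is then unnecessary, and as written it is not a proof.
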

\begin{proof}
Multiply the differential inequality by the integrating factor $e^{-H(t)}$.  Since $H'=f$,
\[
 \frac{d}{dt}\bigl(Ye^{-H}\bigr)+\nu Ze^{-H}\le0,
\]
and integration from $a$ to $t$ gives
\begin{equation}\label{eq:weighted-if}
 Y(t)e^{-H(t)}+\nu\int_a^tZ(q)e^{-H(q)}\,dq\le Y(a).
\end{equation}
By \eqref{eq:weighted-endpoint-H},
\[
 e^{H(t)}\le C(T-t)^{-3/2}(1+s)^{1/2},
\]
so \eqref{eq:weighted-if} yields
\[
 Y(t)\le C(T-t)^{-3/2}(1+s)^{1/2}.
\]
Use $dt=(T-t)\,ds$ and define
\[
 z(s)=\nu(T-t)^{5/2}(1+s)^{-1/2}Z(t(s)).
\]
Substituting the bound for $e^H$ into the integral term of \eqref{eq:weighted-if} gives
$z\in L^1(0,\infty)$.

The axisymmetric Hodge interpolation inequality
\[
 \|U\|_\infty\le C\,Y^{1/4}Z^{1/4}
\]
therefore implies
\[
 (T-t)\|U(t(s))\|_\infty
 \le C_\nu(1+s)^{1/4}z(s)^{1/4}.
\]
For $S>L$, H\"older with exponents $4$ and $4/3$ gives
\[
 \frac1S\int_L^S(1+s)^{1/4}z(s)^{1/4}\,ds
 \le \frac{1}{S}
 \Bigl(\int_L^S z\,ds\Bigr)^{1/4}
 \Bigl(\int_L^S(1+s)^{1/3}\,ds\Bigr)^{3/4}.
\]
The last factor is $O(S)$; hence the right-hand side is
$O((\int_L^\infty z)^{1/4})$.  Let $S\to\infty$ and then $L\to\infty$ to obtain
\[
 \int_a^t\|U(q)\|_\infty\,dq=o(s(t)).
\]
Finally the maximum principle for
\[
 \partial_tu^\theta+b\cdot\nabla u^\theta-
u(\Delta-r^{-2})u^\theta=-Uu^\theta
\]
gives
$M(t)\le M(a)\exp(\int_a^t\|U\|_\infty)$, hence
$M(t)\le C_\varepsilon(T-t)^{-\varepsilon}$ for every $\varepsilon>0$.
Choose $\varepsilon<1/2$.  Then $u^\theta\in L_t^2L_x^\infty$ near $T$, and the angular Prodi--Serrin criterion of Chen--Fang--Zhang \cite{ChenFangZhang2017} gives continuation.
\end{proof}

\begin{proof}[Proof of \Cref{thm:cumulative-M-intro}]
In \eqref{eq:JG} take
\[
 f(t)=\frac9\nu M(t)^2,
 \qquad H(t)=\frac9\nu\int_a^tM(q)^2\,dq.
\]
If the strict limsup in \eqref{eq:intro-cumulative-M} is $<1/6$, choose $\varepsilon>0$ and $t_0<T$ so that, for $t\ge t_0$,
\[
 H(t)\le \Bigl(\frac32-9\varepsilon\Bigr)s(t).
\]
After enlarging the additive constant to absorb $[a,t_0]$, this implies the hypothesis \eqref{eq:weighted-endpoint-H} of \cref{lem:weighted-endpoint}.  Hence $\int_a^t\|U\|_\infty=o(s)$ and continuation follows.

For the endpoint estimate,
\[
 \nu^{-1}\int_a^tM^2\le \frac{s}{6}+\frac1{18}\log(1+s)+C
\]
gives directly
\[
 H(t)\le \frac32s+\frac12\log(1+s)+9C.
\]
Apply \cref{lem:weighted-endpoint} again.
\end{proof}

\begin{proof}[Proof of \Cref{thm:cumulative-compression-intro}]
At almost every time at which $M$ is differentiable, choose a derivative-realizing maximizer $x_*(t)=(r_*,z_*)$.  At that point $\nabla |u^\theta|=0$ and the scalar swirl equation gives
\[
 D^+M\le \sigma M-\nu\frac{M}{r_*^2}.
\]
The circulation maximum principle gives $r_*M\le\Gamma_*$, hence
\begin{equation}\label{eq:riccati-M}
 D^+M\le \sigma M-\frac{\nu}{\Gamma_*^2}M^3.
\end{equation}
Set
\[
 \tau=T-t,\qquad s=\log\frac{T-a}{\tau},\qquad
 w(s)=\frac{\tau M(t(s))^2}{\nu},\qquad
 \beta=\frac{2}{(\Gamma_*/\nu)^2},
\]
and
\[
 \mathcal C(s)=\int_a^{t(s)}\sigma(q)\,dq.
\]
Since $dt/ds=\tau$, differentiating $w$ and using \eqref{eq:riccati-M} gives the logistic/Bernoulli inequality
\[
 w_s\le\bigl(2\tau\sigma-1\bigr)w-\beta w^2.
\]
Let $B(s)=2\mathcal C(s)-s$.  The comparison equation
$\bar w_s=B'\bar w-\beta\bar w^2$ has the explicit solution
\[
 \bar w(s)=\frac{e^{B(s)}}{w(0)^{-1}+\beta\int_0^se^{B(\rho)}\,d\rho},
 \qquad w\le\bar w.
\]
Since the denominator derivative is $\beta e^{B}$,
\begin{equation}\label{eq:w-integral-bound}
 \int_0^sw(\rho)\,d\rho
 \le\frac1\beta\log\!\left(1+\beta w(0)\int_0^se^{B(\rho)}\,d\rho\right).
\end{equation}
If $\bar c=\limsup\mathcal C(s)/s$, then for every $c>\bar c$,
$B(s)\le(2c-1)s+C_c$ for large $s$.  Substitution in \eqref{eq:w-integral-bound}, followed by $c\downarrow\bar c$, gives
\[
 \limsup_{s\to\infty}\frac1s\int_0^sw\,d\rho
 \le(\Gamma_*/\nu)^2(\bar c-\tfrac12)_+.
\]
But
\[
 \int_0^sw\,d\rho=\nu^{-1}\int_a^{t(s)}M(q)^2\,dq.
\]
If $\bar c<c_*=\frac12+\frac{\nu^2}{6\Gamma_*^2}$, the last limsup is $<1/6$, so \cref{thm:cumulative-M-intro} applies.

At the endpoint, \eqref{eq:cstar-log} gives
$B(s)\le(2c_*-1)s+2d\log(1+s)+C$.  Evaluating \eqref{eq:w-integral-bound} yields
\[
 \nu^{-1}\int_a^tM^2\,dq
 \le \frac{s}{6}+(\Gamma_*/\nu)^2d\log(1+s)+C.
\]
The assumed bound on $d$ makes the logarithmic coefficient at most $1/18$; apply the endpoint part of \cref{thm:cumulative-M-intro}.
\end{proof}

\begin{proof}[Proof of \Cref{thm:quartic-dose-intro}]
Let $C_{\rm GN}$ and $C_{H,4}$ be fixed constants such that
\[
 \|f\|_4\le C_{\rm GN}\|f\|_2^{1/4}\|\nabla f\|_2^{3/4},
 \qquad
 \|\nabla U\|_4\le C_{H,4}\|G\|_4.
\]
Set
\begin{equation}\label{eq:C4-def}
 K_0=C_{\rm GN}(C_{H,4}+4),
 \qquad
 C_4=\frac{7^7}{4^8}K_0^8.
\end{equation}
The paired-vorticity interactions have the form
\[
 I_J=\int u^\theta(J_zU_r-J_rU_z)\,d\mu,
 \qquad I_G=-2\int (u^\theta/r)JG\,d\mu.
\]
We first prove the weighted radial estimate used in $I_G$.  For smooth $f$ with the regular axis behavior,
\[
 X:=\int r^{-4/3}|f|^{8/3}\,d\mu
 =-4\int r^{-1/3}|f|^{2/3}f\partial_r f\,d\mu,
\]
where the axis boundary term vanishes.  Cauchy--Schwarz and then Gagliardo--Nirenberg give
\[
 X\le4\|\nabla f\|_2X^{1/4}\|f\|_4
 \le4C_{\rm GN}\|\nabla f\|_2^{7/4}\|f\|_2^{1/4}X^{1/4}.
\]
Dividing by $X^{1/4}$ and taking the $3/8$ power yields
\begin{equation}\label{eq:weighted-radial-proof}
 \|r^{-1/2}f\|_{8/3}
 \le2C_{\rm GN}^{1/2}\|f\|_2^{1/8}\|\nabla f\|_2^{7/8}.
\end{equation}
For $I_J$, H\"older, the $L^4$ Hodge estimate, and Gagliardo--Nirenberg give
\[
 |I_J|
 \le V_4^{1/4}\|\nabla J\|_2\|\nabla U\|_4
 \le C_{H,4}C_{\rm GN}V_4^{1/4}Y^{1/8}Z^{7/8}.
\]
For $I_G$, H\"older with exponents $4,8/3,8/3$ and \eqref{eq:weighted-radial-proof} give
\[
 |I_G|
 \le2V_4^{1/4}\|r^{-1/2}J\|_{8/3}\|r^{-1/2}G\|_{8/3}
 \le4C_{\rm GN}V_4^{1/4}Y^{1/8}Z^{7/8}.
\]
Hence
\[
 |I_J|+|I_G|\le K_0V_4^{1/4}Y^{1/8}Z^{7/8}.
\]
Use Young's inequality with conjugate exponents $8/7$ and $8$:
\[
 ab\le\frac78\varepsilon^{8/7}a^{8/7}+\frac18\varepsilon^{-8}b^8,
\]
with $a=Z^{7/8}$, $b=K_0V_4^{1/4}Y^{1/8}$ and
$\varepsilon^{8/7}=4\nu/7$.  After multiplying the paired energy identity by $2$, this gives
\[
 Y'+\nu Z\le \frac{7^7}{4^8}K_0^8\nu^{-7}V_4^2Y
 =C_4\nu^{-7}V_4^2Y.
\]
Now apply \cref{lem:weighted-endpoint} with
$H=C_4\nu^{-7}\int_a^tV_4^2$.  The strict limsup condition gives \eqref{eq:weighted-endpoint-H} after a finite-time adjustment.  If instead $\int_a^tV_4^2=o(s)$, then $H=o(s)$, which is even stronger.
\end{proof}

\noindent\emph{Logarithmic example.} If $V_4(t)\le C(T-t)^{-1/2}(1+s(t))^{-\gamma}$ for some $\gamma>0$, then
\[
 \int_a^t V_4(q)^2\,dq\le C^2\int_0^{s(t)}(1+\rho)^{-2\gamma}\,d\rho=o(s(t)).
\]
Thus \cref{thm:quartic-dose-intro} applies.

The older pointwise Type-I statements below are immediate stronger sufficient conditions and are retained for comparison with the literature.

\noindent\emph{Pointwise special case.} If
\[
 \limsup_{t\uparrow T}\frac{(T-t)M(t)^2}{\nu}<\frac16,
\]
choose $m<1/6$ so that $M(t)^2\le \nu m/(T-t)$ on a terminal interval.  Integration gives
\[
 \limsup_{t\uparrow T}\frac{\nu^{-1}\int_a^tM(q)^2\,dq}{s(t)}\le m<\frac16,
\]
so \cref{thm:cumulative-M-intro} proves continuation.

\begin{corollary}\label{thm:strain}
Assume $0<\Gamma_*<\infty$.
Let
\[
 A(t)=\|(-U(t))_+\|_\infty,
 \qquad c_0=\limsup_{t\uparrow T}(T-t)A(t).
\]
If
\[
 c_0<\frac12+\frac{\nu^2}{6\Gamma_*^2},
\]
then the solution continues through $T$.  In particular $c_0\le1/2$ is regular.
\end{corollary}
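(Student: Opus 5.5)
The plan is to reduce \cref{thm:strain} to the cumulative compression criterion, \cref{thm:cumulative-compression-intro}, using a pointwise-in-time bound on the compression rate $\sigma$ at a swirl maximum. The key observation is that at every time for which $\sigma(t)=-U(x_*(t),t)$ is defined, we have
\[
 \sigma(t)\le\|(-U(t))_+\|_\infty=A(t).
\]
This holds whatever derivative-realizing maximizer $x_*(t)$ is chosen, since $-U(x_*)\le(-U(x_*))_+\le A$. So no information about the location of the swirl maximum is needed. A bound on $A$ controls $\sigma$ from above, and only an upper bound on $\int_a^t\sigma$ enters \eqref{eq:intro-cumulative-compression}.

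Next convert the limsup hypothesis on $(T-t)A(t)$ into a cumulative bound. Since $c_0<c_*:=\frac12+\frac{\nu^2}{6\Gamma_*^2}$, fix $c$ with $c_0<c<c_*$ and choose $t_0\in[a,T)$ with $A(t)\le c/(T-t)$ for $t\ge t_0$. On $[a,t_0]$ the solution is smooth, so $A$ is bounded there and contributes a finite constant $C_0$. Then for $t\ge t_0$,
\[
 \int_a^t\sigma(q)\,dq\le C_0+c\int_{t_0}^t\frac{dq}{T-q}
 = C_0+c\Bigl(s(t)-\log\tfrac{T-a}{T-t_0}\Bigr)\le C_0'+c\,s(t).
\]
Dividing by $s(t)\to\infty$ gives $\limsup\int_a^t\sigma/s\le c<c_*$, so \cref{thm:cumulative-compression-intro} yields continuation. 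The hypothesis $0<\Gamma_*<\infty$ is exactly the one required there.

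For the last clause, $c_0\le\frac12$ implies $c_0<c_*$, because $\nu^2/(6\Gamma_*^2)>0$ when $\Gamma_*<\infty$. So the strict inequality holds and the first part applies. I do not expect a real obstacle. The only points to check are measurability and local integrability of $\sigma$ on $[a,T)$, which follow from $|\sigma|\le\|U\|_\infty$ and smoothness before $T$, and that $\sigma$ need only be defined almost everywhere for the integral to make sense.
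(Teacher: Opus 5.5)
Your proposal is correct and follows the paper's proof. Both use the pointwise bound $\sigma(t)\le A(t)$ at a derivative-realizing swirl maximum to obtain $\limsup_{t\uparrow T}\int_a^t\sigma(q)\,dq/s(t)\le c_0$, then invoke \cref{thm:cumulative-compression-intro}; you additionally spell out the integration of $A(t)\le c/(T-t)$ and the step $c_0\le\frac12<c_*$, which the paper leaves implicit.
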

\begin{proof}
At a derivative-realizing maximal-swirl point, $\sigma(t)\le A(t)$.  Hence
\[
 \limsup_{t\uparrow T}
 \frac{\int_a^t\sigma(q)\,dq}{s(t)}\le c_0.
\]
Apply \Cref{thm:cumulative-compression-intro}.
\end{proof}

\section{Source-generated current and quartic response}\label{sec:current-quartic}

Fix a regular cut $a<T$ and write
\[
 S_h=e^{\nu h\Delta_5},\qquad h\ge0.
\]
Let $H$ denote the homogeneous current transported by the \emph{actual} meridional drift,
\[
 (\partial_t+b\cdot\nabla-\nu\Delta_5)H=0,\qquad H(a)=G(a),
\]
and define the zero-born recent current
\[
 N=G-H.
\]
Then
\begin{equation}\label{eq:N-equation}
 (\partial_t+b\cdot\nabla-\nu\Delta_5)N=\partial_z(F^2),
 \qquad N(a)=0.
\end{equation}
The point of the decomposition below is not to replace the physical equation by a heat equation.  It isolates the part of the recent current generated by the free heat evolution of the swirl at the cut and leaves every nonlinear correction in one exact remainder.

Set
\[
 F_1(t)=S_{t-a}F(a),\qquad f(t)=F(t)-F_1(t),
\]
and define the freely generated first current
\begin{equation}\label{eq:G2def}
 G_2(t)=\int_a^tS_{t-s}\,\partial_z(F_1(s)^2)\,ds.
\end{equation}
Its meridional Hodge field is
\[
 U_2=-\partial_zP_5G_2,\qquad B_2^r=rU_2,
 \qquad P_5=(-\Delta_5)^{-1}.
\]

\begin{theorem}\label{thm:first-current-intro}
Assume $0<\Gamma_*<\infty$. After every regular cut $a<T$, the free first current $B_2$, defined in the proof below, satisfies
\begin{equation}\label{eq:first-current-bound}
 \sqrt{\frac{t-a}{\nu}}\,\|B_2^r(t)\|_\infty
 \le C_{\rm heat}\left(\frac{\Gamma_*}{\nu}\right)^2,
 \qquad a<t<T.
\end{equation}
The exact difference between the actual zero-born current and this free
response is the nonlinear Duhamel remainder \eqref{eq:recent-remainder}.
\end{theorem}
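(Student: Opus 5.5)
The estimate is a pure heat-kernel computation in the five-dimensional lift, so the plan is to make it scale-invariant and then bound the single resulting kernel. The only input from the solution is the circulation bound at the cut. The circulation maximum principle gives $\|r^2F(a)\|_\infty=\|\Gamma(a)\|_\infty\le\Gamma_*$, so $|F(a)|\le\Gamma_* r^{-2}$ pointwise.

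\emph{Step 1: free swirl.} The function $r^{-2}$ is the five-dimensional Newtonian potential profile in the $r$ variable, up to axial independence. Comparing $F_1(s)=S_{s-a}F(a)$ with heat flow of $\Gamma_*r^{-2}$, I expect two bounds. First, a uniform bound $|F_1(s)|\le \Gamma_*\,\min\{r^{-2},C(\nu(s-a))^{-1}\}$. Second, derivative bounds: $\partial_zF_1=\partial_z S_{s-a}F(a)$, with the derivative placed on the kernel, costs $(\nu(s-a))^{-1/2}$. Hence
\[
 |F_1(s)|^2\lesssim \Gamma_*^2\min\{r^{-4},(\nu(s-a))^{-2}\}.
\]

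\emph{Step 2: the current.} I move the $\partial_z$ in \eqref{eq:G2def} onto the semigroup and combine it with $U_2=-\partial_zP_5G_2$. This gives
\[
 B_2^r(t)=-r\int_a^t \partial_z^2P_5S_{t-s}\,F_1(s)^2\,ds .
\]
The operator $\partial_z^2P_5S_h$ is a Riesz transform composed with heat flow. Its kernel is bounded by $C(|y|+\sqrt{\nu h})^{-5}$ in $\R^5$, and it annihilates constants up to the needed decay. Substituting $s-a=(t-a)\theta$ and rescaling space by $\sqrt{\nu(t-a)}$ makes the integrand depend only on $\Gamma_*^2/\nu^2$ times a dimensionless integral. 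The prefactor $r$ together with the $1/\nu$ from the time measure $ds=(t-a)\,d\theta$ combine to give exactly $\sqrt{(t-a)/\nu}^{\,-1}$. This yields \eqref{eq:first-current-bound} with $C_{\rm heat}$ equal to the supremum over rescaled $r$ of that integral.

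\emph{Step 3: the remainder identity.} The second assertion is algebraic. Subtracting \eqref{eq:G2def}, written as $(\partial_t-\nu\Delta_5)G_2=\partial_z(F_1^2)$ with $G_2(a)=0$, from \eqref{eq:N-equation} shows that $N-G_2$ solves a heat equation. Its forcing is
\[
 -b\cdot\nabla N+\partial_z\bigl(2F_1f+f^2\bigr),
\]
with zero data. Duhamel then gives the remainder \eqref{eq:recent-remainder}.

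\emph{Main obstacle.} The hard part is finiteness of the dimensionless integral in Step 2, in two regimes. Near the axis $r\to0$, the factor $r$ must compensate the singularity of $F_1^2$. Near $\theta\to1$, the kernel of $\partial_z^2P_5S_{(1-\theta)}$ becomes singular. In that regime I would use the cancellation of the Riesz kernel against the $z$-independent majorant $r^{-4}$: since $\partial_z^2P_5$ annihilates $z$-independent functions, only the $z$-oscillating part of $F_1^2$ contributes. This oscillating part is controlled by $\partial_zF_1$, which is smoothed at the scale $\sqrt{\nu(s-a)}$. If this cancellation turns out too weak, I would instead split $F_1^2$ into its parts with $r\lesssim\sqrt{\nu(s-a)}$ and $r\gtrsim\sqrt{\nu(s-a)}$ and estimate each with the appropriate branch of the minimum in Step 1.
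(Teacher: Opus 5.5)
Your reduction is the paper's: the circulation bound and positivity of the heat flow majorize $F_1$, both axial derivatives go onto $P_5S_{t-s}$, and the remainder identity is Duhamel. The gap is that the uniform finiteness of the rescaled integral is exactly the content of \eqref{eq:first-current-bound}. You leave it open, and your plan for it targets the wrong places.

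No cancellation is needed. Integrate your (correct) kernel bound $C(|y|+\sqrt{\nu h})^{-5}$ over the axial variable. This gives a kernel $\lesssim(|x'-y'|+\sqrt{\nu(t-s)})^{-4}$ on $\R^4$, with $r=|x'|$. The paper obtains the same object from $P_5S_{t-s}=\int_{\nu(t-s)}^\infty e^{A\Delta_5}\,dA$ and $\|\partial_{zz}H^{(1)}_A\|_{L^1(\R)}=\sqrt{2/(\pi e)}/A$. Pair this kernel with the $z$-independent majorant $\Gamma_*^2g_{\nu(s-a)}^2$, where $g_q=e^{q\Delta_4}(r^{-2})=(1-e^{-r^2/(4q)})/r^2$. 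Here $r^{-2}$ is the $z$-independent extension of the four-dimensional Newtonian profile, not a five-dimensional one, which is why its five-dimensional heat flow reduces to $e^{q\Delta_4}$. The result has only integrable singularities in $\theta=(s-a)/(t-a)$:
\begin{itemize}
\item a $\log\frac{1}{1-\theta}$ as $\theta\to1$, from the local logarithmic divergence of $(\rho+\epsilon)^{-4}$ in $\R^4$;
\item a $\log\frac1\theta$ as $\theta\to0$, from the non-integrability of $r^{-4}$ at the axis in $\R^4$, cut off at $\sqrt{\nu(s-a)}$.
\end{itemize}

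The factor $r$ is harmless at the axis, since there $F_1(s)^2\le\Gamma_*^2/(4\nu(s-a))^2$ for $s>a$. It must instead be beaten at large $r$. That requires the $r^{-4}$ branch of your minimum, because the kernel is not integrable on $\R^5$ and the $L^\infty$ branch alone diverges logarithmically. Your $\partial_zF_1$ bound is therefore unnecessary, although moving one $\partial_z$ onto $F_1^2$ would also close.

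The paper carries out this computation explicitly. Writing $g_q^2$ as a Gaussian mixture, $E=e^{A\Delta_4}(g_q^2)$ satisfies $r^2E\le\log(1+2A/q)/(4eA)$ and $E\le\log(1+2A/q)/(16A^2)$. Their geometric mean, $rE\le\log(1+2A/q)/(8\sqrt e\,A^{3/2})$, absorbs the factor $r$ uniformly in $r$. The substitution $s=h\sigma$, $A=\nu h\alpha$ then produces $h\,(\nu h)^{-3/2}=\nu^{-3/2}h^{-1/2}$ times the convergent integral $\mathcal J$. This yields the explicit constant in \eqref{eq:Cheat}. Your sketch, with unspecified kernel and majorant constants, would at best give some finite absolute constant in place of $C_{\rm heat}$.
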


Thus a radial current exceeding the free elapsed Type-I scale cannot come
from this free response alone: it requires the nonlinear remainder,
the inherited current, or both.

\begin{proposition}\label{prop:recent-remainder}
For every $a<t<T$,
\begin{equation}\label{eq:recent-remainder}
\boxed{
 N(t)-G_2(t)
 =
 \int_a^t S_{t-s}
 \Big[
   \partial_z\!\big(2F_1f+f^2\big)-b\cdot\nabla N
 \Big](s)\,ds .}
\end{equation}
Thus the source modification $2F_1f+f^2$ and the transport of the zero-born current by the actual drift are both retained.
\end{proposition}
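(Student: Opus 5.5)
The identity \eqref{eq:recent-remainder} is a Duhamel formula for the heat semigroup $S_h=e^{\nu h\Delta_5}$. The idea is to move the transport term of \eqref{eq:N-equation} to the right-hand side and treat it as a forcing, then split the source.

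Starting from \eqref{eq:N-equation}, write
\[
 (\partial_t-\nu\Delta_5)N=\partial_z(F^2)-b\cdot\nabla N,\qquad N(a)=0.
\]
Since $N(a)=0$, there is no homogeneous term, and Duhamel's principle for $\partial_t-\nu\Delta_5$ gives
\[
 N(t)=\int_a^tS_{t-s}\bigl[\partial_z(F^2)-b\cdot\nabla N\bigr](s)\,ds .
\]
Next, insert $F=F_1+f$, so that $F^2=F_1^2+2F_1f+f^2$. The $F_1^2$ contribution is exactly $G_2(t)$ by the definition \eqref{eq:G2def}. Subtracting it leaves \eqref{eq:recent-remainder}.

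The only step needing care is justifying Duhamel's formula for this smooth but non-heat solution. I would do this by differentiating $\Phi(s)=S_{t-s}N(s)$ in $s$ on $(a,t)$. One has
\[
 \Phi'(s)=S_{t-s}\bigl[(\partial_s-\nu\Delta_5)N(s)\bigr],
\]
which uses that $S_h$ commutes with $\Delta_5$ on the lifted five-dimensional space. Integrating $\Phi'$ from $a$ to $t$ then gives the formula.

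For this computation I would view $N$, $F$ and $F_1$ as radial functions on $\mathbb R^5$, with the measure $dm_5$. On the regular interval $[a,t]\subset[a,T)$ they are smooth and decaying, as follows from the strong $H^m$ solution, the regularity of $F=u^\theta/r$ and $G=\omega^\theta/r$ at the axis, and the smoothness of $H$ as the solution of a linear drift-diffusion equation with smooth drift. Consequently the forcing lies in $C([a,t];L^2(\mathbb R^5))$, and $S_h$ acts as a contraction there, so the Bochner integrals converge and the differentiation is legitimate. Checking this regularity of the lifted quantities is the main (mild) obstacle. The algebraic identity itself is immediate.
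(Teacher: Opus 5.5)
Your proposal is correct and matches the paper's proof. You move the transport term to the right-hand side of the $N$-equation, apply Duhamel's formula for $S_h$ using $N(a)=0$, expand $F^2=F_1^2+2F_1f+f^2$, and subtract the definition of $G_2$; your extra justification, differentiating $S_{t-s}N(s)$ on the lifted five-dimensional space, is a valid detail that the paper leaves implicit.
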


\begin{proof}
Equation \eqref{eq:N-equation} is
\[
 \partial_tN-\nu\Delta_5N
 =\partial_z(F^2)-b\cdot\nabla N,\qquad N(a)=0.
\]
Duhamel's formula gives
\[
 N(t)=\int_a^tS_{t-s}
 \big[\partial_z(F^2)-b\cdot\nabla N\big](s)\,ds.
\]
Since $F=F_1+f$,
\[
 F^2=F_1^2+2F_1f+f^2.
\]
Subtracting \eqref{eq:G2def} proves \eqref{eq:recent-remainder}.  The transported quantity is $N$, not the total current $G$, because the inherited homogeneous field has already been separated into $H$.
\end{proof}

At the original cut, the source tag vanishes.  Thus its physical quartic
work $\sigma_{\rm src}(a)=0$ has the exact newborn derivative
\begin{equation}\label{eq:newborn-source-slope}
 \sigma_{\rm src}'(a)
 =-\frac1{V_4(a)}\int r^2F(a)^4
 \big[-\partial_{zz}(-\Delta_5)^{-1}\big](F(a)^2)
 \,dm_5.
\end{equation}
This derivative does not depend on the inherited current at the cut;
source-age transport and the evolving receiver enter at higher order.
No sign of \eqref{eq:newborn-source-slope} is imposed.

\begin{proposition}\label{prop:newborn-curvature-r96}
Fix a regular cut $a$ with $V_4(a)>0$ and suppose $F(a)$ is
axis-compatible and sufficiently smooth and decaying for the integrations
below (in particular, this holds for smooth rapidly decaying profiles).
Set
\[
 \Gamma=r^2F(a),\qquad
 \chi=P_5\partial_z\bigl(F(a)^2\bigr),\qquad
 \Delta_{-1}=\partial_r^2-r^{-1}\partial_r+\partial_z^2.
\]
Then the exact newborn slope satisfies
\begin{equation}\label{eq:newborn-curvature-r96}
 \sigma_{\rm src}'(a)
 =-\frac{2}{V_4(a)}\int\Gamma^2|\nabla\chi|^2\,d\mu
  +\frac{1}{V_4(a)}\int\Delta_{-1}(\Gamma^2)\chi^2\,d\mu.
\end{equation}
In particular, $\sigma_{\rm src}'(a)>0$ requires
\begin{equation}\label{eq:newborn-curvature-necessary-r96}
 \int\Delta_{-1}(\Gamma^2)\chi^2\,d\mu
 >2\int\Gamma^2|\nabla\chi|^2\,d\mu.
\end{equation}
\end{proposition}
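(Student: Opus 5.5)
The plan is to begin from the exact newborn slope \eqref{eq:newborn-source-slope} and rewrite it by two integrations by parts: one in $z$, and one against the lifted Laplacian with the weight $\Gamma^2$. Throughout write $\rho=F(a)^2$, so that $\chi=P_5\partial_z\rho$ and $-\Delta_5\chi=\partial_z\rho$. Since $P_5$ commutes with $\partial_z$, the operator in \eqref{eq:newborn-source-slope} is
\[
 -\partial_{zz}P_5\rho=-\partial_z\chi .
\]
Also $V_4(a)=\int r^2F(a)^4\,dm_5$, and hence
\[
 \sigma_{\rm src}'(a)=\frac1{V_4(a)}\int r^2\rho^2\,\partial_z\chi\,dm_5 .
\]

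\emph{Step 1: integrate by parts in $z$.} The weight $r^2r^3$ does not depend on $z$, and $\rho,\chi$ decay in $z$. Moving $\partial_z$ onto $\rho^2$ and then substituting $\partial_z\rho=-\Delta_5\chi$ gives
\[
 \int r^2\rho^2\partial_z\chi\,dm_5=-2\int r^2\rho\,\partial_z\rho\,\chi\,dm_5
 =2\int r^2\rho\,\chi\,\Delta_5\chi\,dm_5 .
\]
Now $r^2\rho\,dm_5=r^5F(a)^2\,dr\,dz=\Gamma^2\,r\,dr\,dz=\Gamma^2\,d\mu$. So
\[
 \sigma_{\rm src}'(a)=\frac{2}{V_4(a)}\int\Gamma^2\chi\,\Delta_5\chi\,d\mu .
\]

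\emph{Step 2: split off the gradient and transpose $\Delta_5$.} I would use the pointwise identity $\chi\Delta_5\chi=\tfrac12\Delta_5(\chi^2)-|\nabla\chi|^2$. The gradient part immediately produces the term $-\tfrac{2}{V_4(a)}\int\Gamma^2|\nabla\chi|^2\,d\mu$.

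For the remaining term $\int\Gamma^2\Delta_5(\chi^2)\,d\mu$, compute the formal adjoint of $\Delta_5$ with respect to $d\mu=r\,dr\,dz$:
\begin{itemize}
\item the adjoint of $\partial_r^2$ is $g\mapsto r^{-1}(rg)''=g''+2g'/r$;
\item the adjoint of $3r^{-1}\partial_r$ is $g\mapsto-3g'/r$;
\item $\partial_z^2$ is self-adjoint.
\end{itemize}
Summing, the adjoint is exactly $\Delta_{-1}=\partial_r^2-r^{-1}\partial_r+\partial_z^2$. This is consistent with \eqref{eq:Gamma}, where $\Delta_{-1}$ is the natural operator on the circulation. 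Hence
\[
 \frac{2}{V_4(a)}\cdot\frac12\int\Gamma^2\Delta_5(\chi^2)\,d\mu=\frac{1}{V_4(a)}\int\Delta_{-1}(\Gamma^2)\chi^2\,d\mu ,
\]
which is \eqref{eq:newborn-curvature-r96}. The necessary condition \eqref{eq:newborn-curvature-necessary-r96} then follows at once, because $V_4(a)>0$.

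\emph{Main obstacle: the boundary terms in Steps 1 and 2.}
\begin{itemize}
\item At infinity, $\chi=P_5\partial_z\rho$ decays only like the five-dimensional Newtonian potential of a derivative, roughly $|x|^{-4}$ with gradient $|x|^{-5}$. With $\rho$ rapidly decaying, every boundary term on large spheres carries a factor of $\Gamma^2$ or $\rho$, so these terms vanish. One must still check that the gradient term $\int\Gamma^2|\nabla\chi|^2\,d\mu$ converges absolutely; it does, since $\Gamma^2$ is rapidly decaying.
\item At the axis, axis compatibility gives $F(a)$ even and smooth in $r$, so $\Gamma^2=O(r^4)$ and $\partial_r\Gamma^2=O(r^3)$, while $\chi$ is smooth and even in $r$.
\item The $r$-boundary terms in the double integration by parts have the form $r\,\Gamma^2\partial_r(\chi^2)$ and $\chi^2\partial_r(r\Gamma^2)-3\Gamma^2\chi^2$. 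At $r=0$ they are $O(r^4)$, $O(r^4)$ and $O(r^4)$ respectively, so they vanish.
\end{itemize}
I would therefore carry out the computation on $\{\varepsilon<r<R,\ |z|<R\}$ and let $\varepsilon\to0$, $R\to\infty$.
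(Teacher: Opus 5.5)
Your proposal is correct and follows the paper's proof essentially step for step. The common steps are the $z$-integration by parts, the substitution $-\Delta_5\chi=\partial_z(F(a)^2)$, the splitting $\chi\Delta_5\chi=\tfrac12\Delta_5(\chi^2)-|\nabla\chi|^2$, and the transposition of $\Delta_5$. There are two cosmetic differences. You compute the $d\mu$-adjoint of $\Delta_5$ directly, obtaining $\Delta_{-1}$, whereas the paper uses self-adjointness of $\Delta_5$ in $dm_5$ together with the intertwining identity $r^3\Delta_5(\Phi/r^2)=r\Delta_{-1}\Phi$. You also start from \eqref{eq:newborn-source-slope} rather than re-deriving it from $\partial_tG_{\rm src}(a)=\partial_z(F(a)^2)$ and $U_{\rm src}(a)=0$.
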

\begin{proof}
At the cut, $G_{\rm src}(a)=0$ and
$\partial_tG_{\rm src}(a)=\partial_z(F(a)^2)$; the transport term vanishes
for this tagged current. Thus
$\partial_tU_{\rm src}(a)=-\partial_z\chi$, giving
\[
 V_4(a)\sigma_{\rm src}'(a)
  =\int r^2F(a)^4\partial_z\chi\,dm_5,
 \qquad dm_5=r^3\,dr\,dz.
\]
Writing $q=r^2F(a)^2=\Gamma^2/r^2$, using
$-\Delta_5\chi=\partial_z(F(a)^2)$, and integrating first in $z$ and
then against the five-dimensional radial measure yields
\[
 V_4(a)\sigma_{\rm src}'(a)
 =-2\int q\chi\partial_z(F(a)^2)\,dm_5
 =2\int q\chi\Delta_5\chi\,dm_5
 =-2\int q|\nabla\chi|^2\,dm_5
  +\int(\Delta_5q)\chi^2\,dm_5.
\]
The elementary intertwining identity
\[
 r^3\Delta_5(\Phi/r^2)=r\Delta_{-1}\Phi
\]
and $q\,dm_5=\Gamma^2\,d\mu$ give \eqref{eq:newborn-curvature-r96}.
\end{proof}

The first term of \eqref{eq:newborn-curvature-r96} is nonpositive,
but the second need not be. The strictly positive example in
\cref{app:positive-newborn-r96} excludes universal newborn braking.

\begin{proposition}\label{prop:bounded-fixed-cut-tags}
Fix a positive regular strong cut $a<T$ such that
$G(a),F_z(a)\in L^2(\R^3)\cap L^\infty(\R^3)$ and
$F(a)\in L^\infty(\R^3)$, with axis-compatible smoothness.
Write $U_{\rm old}=-\partial_zP_5H$ and
$\mathcal U=-\partial_zP_5(N-G_2)$.  Then
\begin{equation}\label{eq:bounded-fixed-linear-tags}
 \sup_{a\le t<T}\|U_{\rm old}(t)\|_\infty\le C_a,
 \qquad \|U_2(t)\|_\infty\le C_a(t-a).
\end{equation}
Moreover the original signed source action can be written
\begin{equation}\label{eq:fixed-remainder-action}
 N_a(t)=\int_a^t\left[
 -\langle\mathcal U(q)\rangle_{\pi_4(q)}
 -\nu\frac{D_4(q)}{V_4(q)}\right]dq+B_a(t),
 \qquad |B_a(t)|\le\frac12 C_a(T-a)^2.
\end{equation}
Consequently the signed continuation criterion in
\cref{thm:signed-source-intro} is unchanged if $N_a$ is replaced by
the integral in brackets.  This is a fixed-cut reduction, not a new
continuation estimate.
\end{proposition}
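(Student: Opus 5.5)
The plan is to decompose the source current as $G_{\rm src}=N=H'+G_2+(N-G_2)$ and treat each linear piece by a linear-in-data parabolic bound. First I identify the tags. $G_{\rm src}$ solves \eqref{eq:N-equation} with zero data, so $G_{\rm src}=N$ and $U_{\rm src}=-\partial_zP_5N$. Hence
\[
U_{\rm src}=U_2+\mathcal U,
\]
and the source work splits as $\sigma_{\rm src}=-\langle U_2\rangle_{\pi_4}-\langle\mathcal U\rangle_{\pi_4}$. Here $\pi_4(q)=|u^\theta(q)|^4\,d\mu/V_4(q)$ is the quartic probability weight, so $|\langle U_2\rangle_{\pi_4}|\le\|U_2\|_\infty$. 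Once \eqref{eq:bounded-fixed-linear-tags} is known, define $B_a(t)=-\int_a^t\langle U_2\rangle_{\pi_4}dq$. Then $|B_a(t)|\le C_a\int_a^t(q-a)\,dq\le\frac12C_a(T-a)^2$, which gives \eqref{eq:fixed-remainder-action}. Since $B_a$ is bounded, $e^{8N_a}$ changes only by a bounded factor, and the $o(s)$ condition \eqref{eq:signed-source-intro} is unchanged. This yields the final reduction statement.

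For $U_{\rm old}$, I note that $H$ solves a drift--diffusion equation with divergence-free drift on the lifted measure. Because $b\cdot\nabla$ is skew with respect to $dm_5$, the maximum principle and the $L^2$ energy inequality give $\|H(t)\|_{L^2\cap L^\infty}\le\|G(a)\|_{L^2\cap L^\infty}$ for all $t<T$. I must check that the $L^2$ spaces in the hypothesis match the $dm_5$ pairing; for axisymmetric fields, $\|G\|_{L^2(\R^3)}$ corresponds to $\int G^2r^3$ up to the suppressed factor, since $G=\omega^\theta/r$ and I use the $\R^5$ lift. The map $U=-\partial_zP_5H$ is a first-order Riesz-type potential in $\R^5$. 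I then estimate $\|\nabla P_5 g\|_\infty\le C\|g\|_{L^2}^{\theta}\|g\|_{L^\infty}^{1-\theta}$ by splitting the kernel $|x-y|^{-4}$ in $\R^5$ into near and far parts. The near part is integrable against $L^\infty$ since $4<5$, and the far part lies in $L^2$ since $8>5$. This gives $\sup\|U_{\rm old}\|_\infty\le C_a$ uniformly up to $T$, which is the point of the statement: no a priori control of $u$ near $T$ is needed.

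For $U_2$, I use that $G_2$ is the free heat Duhamel term of $\partial_z(F_1^2)$. Commuting $\partial_z$ with $S_{t-s}$ and $P_5$ gives
\[
U_2(t)=-\int_a^t S_{t-s}\,\partial_z^2P_5\bigl(F_1(s)^2\bigr)\,ds .
\]
Here $\partial_z^2P_5$ is a zeroth-order Riesz operator, which is not bounded on $L^\infty$. I avoid this by moving one derivative onto the data: $\partial_z(F_1^2)=2F_1S_{s-a}F_z(a)$. Then $\|\partial_z(F_1^2)\|_{L^2\cap L^\infty}\le2\|F(a)\|_\infty\|F_z(a)\|_{L^2\cap L^\infty}$ uniformly in $s$, using the heat contraction on $L^p(dm_5)$. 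The same near/far kernel bound for $\partial_zP_5$ then gives $\|U_2(t)\|_\infty\le C\int_a^t\|\partial_z(F_1^2)\|_{L^2\cap L^\infty}ds\le C_a(t-a)$.

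The main obstacle is this near/far Riesz-potential estimate on the lifted space. It requires checking that the hypotheses in $L^2(\R^3)\cap L^\infty$ translate into $L^2(\R^5)\cap L^\infty$ bounds for the lifted functions. It also requires that $F_z(a)$, and hence $\partial_z(F_1^2)$, is admissible after lifting. If the $L^2$ weights do not match exactly, I would fall back on $L^p(dm_5)$ with $p<5$, obtained by interpolating $L^2(d\mu)$, $L^\infty$, and the Hardy weight.
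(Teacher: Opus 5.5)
Your proof has a genuine gap in the bound for $U_{\rm old}$, which is the step you yourself call the point of the statement.

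\textbf{The skewness claim is false for $dm_5$.} You assert that $b\cdot\nabla$ is skew with respect to $dm_5=r^3dr\,dz$, but the meridional drift is divergence free only for the physical measure $d\mu=r\,dr\,dz$. Physical incompressibility $\partial_ru^r+u^r/r+\partial_zu^z=0$ gives
\[
 r^{-3}\partial_r(r^3u^r)+\partial_zu^z=2U .
\]
Hence
\[
 \tfrac12\tfrac{d}{dt}\int H^2\,dm_5+\nu\int|\nabla H|^2\,dm_5=\int UH^2\,dm_5
\]
(up to a good-sign axis term). Here $U$ is the \emph{total} current, including the uncontrolled source part. A bound on $\|H\|_{L^2(dm_5)}=\|rH\|_{L^2(d\mu)}$ uniform up to $T$ would therefore need exactly the a priori control of $U$ that the proposition avoids. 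This is the $\eta_{\rm old}$ norm, which the paper bounds only after assuming smooth continuation, in the converse of \cref{cor:sourced-enstrophy-intro}.

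\textbf{The norm identification is wrong.} $\|G\|_{L^2(\R^3)}^2$ is $2\pi\int G^2\,r\,dr\,dz$, not $\int G^2r^3\,dr\,dz$; the latter is $\|\omega^\theta\|_2^2/(2\pi)$.

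\textbf{The fallback fails.} $L^2(d\mu)\cap L^\infty$ does not control $\int|g|^pr^3\,dr\,dz$ for any $p<5$, and the Hardy weight only helps near the axis. For instance, take unit boxes at radius $2^k$ with heights $2^{-k/2}k^{-1}$: these lie in $L^2(d\mu)\cap L^\infty$ but have infinite $L^p(dm_5)$ norm for every $p<6$.

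\textbf{The fix, which is the paper's route, is to work in the physical measure throughout.} For $q$ solving $(\partial_t+b\cdot\nabla-\nu\Delta_5)q=0$, the drift is $d\mu$-divergence free. Radial integration by parts shows that $\Delta_5$ is dissipative in $L^p(d\mu)$, with a favorable axis term $\frac{4\pi\nu}{p}\int|q(0,z)|^p\,dz$. So $\|H(t)\|_{L^2(d\mu)}$ contracts under the \emph{actual} drift, and the maximum principle handles $L^\infty$. Then use the Hodge bound in physical norms,
\[
 \|Kq\|_\infty\le C\|q\|_{L^2(\R^3)}^{2/3}\|q\|_\infty^{1/3},
\]
which comes from the three-dimensional Biot--Savart kernel $|x-y|^{-2}$ split into near and far parts. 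Do not use the five-dimensional kernel $|X-Y|^{-4}$ against $L^2(\R^5)$.

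\textbf{The $U_2$ step can be kept.} The free semigroup does contract $L^p(dm_5)$, and $rF_z(a)=\partial_zu^\theta(a)$ is in $L^2$ at a strong cut. Note that this holds for that reason, not because of the weight identification you wrote. Still, doing this step in physical norms matches the stated hypotheses directly. Your splitting $U_{\rm src}=U_2+\mathcal U$ and the bound $|B_a|\le\frac12C_a(T-a)^2$ agree with the paper.
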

\begin{proof}
For a smooth scalar $q$ solving
$(\partial_t+b\cdot\nabla-\nu\Delta_5)q=0$,
physical incompressibility and radial integration by parts give, for
$2\le p<\infty$,
\[
 \frac1p\frac d{dt}\|q\|_p^p
 +\frac{4\nu(p-1)}{p^2}\|\nabla|q|^{p/2}\|_2^2
 +\frac{4\pi\nu}{p}\int_\R|q(0,z)|^p\,dz=0.
\]
In particular the physical $L^2$ norm contracts; the smooth lifted
maximum principle controls the $L^\infty$ norm.  These statements apply
to $H$ under the \emph{actual} drift and to the free semigroup $S_h$.
For the whole-space axisymmetric radial Hodge operator $K=-\partial_zP_5$,
the classical physical Biot--Savart bound, split into near and far parts,
gives
\[
 \|Kq\|_\infty
 \le C\|q\|_2^{2/3}\|q\|_\infty^{1/3}.
\]
Applying it to the homogeneous tag yields the first bound in
\eqref{eq:bounded-fixed-linear-tags}.  Since
$F_1(t)=S_{t-a}F(a)$ and $\partial_z$ commutes with the free
semigroup, physical $L^p$ contraction gives, for $p=2,\infty$,
\[
 \|G_2(t)\|_p
 \le 2(t-a)\|F(a)\|_\infty\|F_z(a)\|_p.
\]
The same Hodge estimate gives the second bound.  Finally,
$U_{\rm src}=U_2+\mathcal U$ in the \emph{same original source cut}.
Substitution into the definition of $N_a$ gives
$B_a(t)=-\int_a^t\langle U_2(q)\rangle_{\pi_4(q)}dq$,
which satisfies the stated bound.  Thus the exponential of the two
actions differs by a factor bounded above and below by positive
constants depending on the fixed cut.  The remaining field
$\mathcal U$ still depends on the full actual drift and inherited-current
shear through \eqref{eq:recent-remainder}; no source is reset and no
velocity derivative is controlled by this proposition.
\end{proof}

Put
\begin{equation}\label{eq:Cheat}
 C_{\rm heat}
 =
 \frac{\sqrt{2/(\pi e)}}{8\sqrt e}\,
 \mathcal J,\qquad
 \mathcal J=
 \int_0^1\int_{1-\sigma}^{\infty}
 \frac{\log(1+2\alpha/\sigma)}{\alpha^{5/2}}
 \,d\alpha\,d\sigma.
\end{equation}
The integral is finite.  Numerically $C_{\rm heat}\approx0.19608063$, but no sharpness is claimed.

No single-core, finite-scale-depth, axial-compactness, or sign-alignment hypothesis is used in \Cref{thm:first-current-intro}.

\begin{proof}[Proof of \Cref{thm:first-current-intro}]
Translate time so that $a=0$ and put $h=t-a$.  Positivity of the heat semigroup and the four-dimensional radial heat identity give
\[
 |F_1(s,r,z)|
 \le \Gamma_*\,g_{\nu s}(r),
 \qquad
 g_q(r)=e^{q\Delta_4}(r^{-2})
       =\frac{1-e^{-r^2/(4q)}}{r^2}.
\]
For $q>0$,
\[
 g_q(r)=\int_0^{1/(4q)}e^{-\eta r^2}\,d\eta.
\]
Let
\[
 E(A,q,r)=e^{A\Delta_4}(g_q^2)(r).
\]
Squaring the Gaussian representation, applying the four-dimensional Gaussian heat formula, and writing $v=\eta+\xi$ gives
\[
 E(A,q,r)=
 \int_0^{1/(2q)}
 \frac{m_q(v)}{(1+4Av)^2}
 \exp\!\left(-\frac{vr^2}{1+4Av}\right)\,dv,
\]
where $0\le m_q(v)\le v$.  From $xe^{-x}\le e^{-1}$,
\[
 r^2E(A,q,r)\le
 \frac{\log(1+2A/q)}{4eA}.
\]
Discarding the exponential and integrating $v(1+4Av)^{-2}$ gives
\[
 E(A,q,r)\le
 \frac{\log(1+2A/q)}{16A^2}.
\]
Taking the geometric mean,
\begin{equation}\label{eq:gaussian-mix}
 rE(A,q,r)\le
 \frac{\log(1+2A/q)}{8\sqrt e\,A^{3/2}}.
\end{equation}

Since $P_5=(-\Delta_5)^{-1}$,
\[
 P_5e^{\nu(h-s)\Delta_5}
 =\int_{\nu(h-s)}^\infty e^{A\Delta_5}\,dA.
\]
Commuting $\partial_z$ with the heat semigroup gives
\[
 U_2(h)
 =-\int_0^h\int_{\nu(h-s)}^\infty
 \partial_{zz}e^{A\Delta_5}(F_1(s)^2)\,dA\,ds.
\]
The one-dimensional axial heat kernel satisfies
\[
 \|\partial_{zz}H_A^{(1)}\|_{L^1(\R)}
 =\frac{\sqrt{2/(\pi e)}}{A}.
\]
Using \eqref{eq:gaussian-mix} and $|F_1|^2\le\Gamma_*^2g_{\nu s}^2$,
\[
 r|U_2(h,r,z)|
 \le
 \frac{\sqrt{2/(\pi e)}}{8\sqrt e}\,
 \Gamma_*^2
 \int_0^h\int_{\nu(h-s)}^\infty
 \frac{\log(1+2A/(\nu s))}{A^{5/2}}
 \,dA\,ds.
\]
Set $s=h\sigma$ and $A=\nu h\alpha$.  The last expression is
\[
 C_{\rm heat}\,
 \frac{\Gamma_*^2}{\nu^{3/2}h^{1/2}}.
\]
Since $B_2^r=rU_2$, this is \eqref{eq:first-current-bound}.
\end{proof}

\begin{remark}\label{rem:two-clocks}
The clock in \eqref{eq:first-current-bound} is $t-a$, not $T-t$.  For a fixed old cut $a<T$ the free first current is harmless on the terminal scale.  For a rolling decomposition with $t-a$ comparable to $T-t$, the estimate has the correct terminal Type-I order.  No pointwise estimate for the inherited homogeneous field is inferred.
\end{remark}

The combination of \cref{prop:recent-remainder,thm:first-current-intro} identifies the first unpriced recent-current mechanism.  Any radial inflow worse than the free elapsed Type-I scale must be supplied by the nonlinear remainder \eqref{eq:recent-remainder}, by the inherited homogeneous field, or through their signed interaction in the receiver below.

\subsection*{Localized source-generated inward displacement}

Fix a regular cut $a<T$ and split $G=G_{\rm old}+G_{\rm src}$,
where $G_{\rm old}$ evolves homogeneously from $G(a)$ and $G_{\rm src}$
is generated after $a$ by the actual source $\partial_z(F^2)$ under the
same meridional drift. Let $u_{\rm old}^r,u_{\rm src}^r$ be their Hodge
radial velocities. For $R>0$, set
\[
 g_R^{\rm src}(t)=\sup_{0<r<R,\ z}(-u_{\rm src}^r(r,z,t))_+.
\]

\begin{theorem}\label{thm:source-inflow-intro}
Assume $0<\Gamma_*<\infty$. If for every interval $I\subset[a,T)$,
\begin{equation}\label{eq:intro-source-inflow}
 \int_I g_R^{\rm src}(t)\,dt\le A\sqrt{\nu|I|}
\end{equation}
with a fixed $A<\infty$, then the solution continues through $T$.
\end{theorem}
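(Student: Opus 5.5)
The plan is to reduce \cref{thm:source-inflow-intro} to the cumulative compression criterion, \Cref{thm:cumulative-compression-intro}. That requires showing that the compression $\sigma(t)=-U(x_*(t),t)$ at a derivative-realizing swirl maximum has cumulative integral bounded by $c\,s(t)+C$ with $c<c_*$, or even $o(s)$.

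Split $U=U_{\rm old}+U_{\rm src}$ along the regular cut. Then
\[
 \sigma\le \|U_{\rm old}\|_\infty+\bigl(-u^r_{\rm src}(x_*)\bigr)_+/r_*.
\]
The inherited part is handled by the fixed-cut bound of \cref{prop:bounded-fixed-cut-tags}: $\sup_{a\le t<T}\|U_{\rm old}\|_\infty\le C_a$, so it contributes at most $C_a(T-a)$ to $\int_a^t\sigma$. This is a bounded constant and harmless.

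For the source part, two cases arise at each time.
\begin{itemize}
\item If $r_*(t)<R$, then $(-u^r_{\rm src}(x_*))_+\le g_R^{\rm src}(t)$. The circulation maximum principle $r_*M\le\Gamma_*$ gives $1/r_*\ge M/\Gamma_*$, which is a lower bound and not useful here. Instead, the Riccati inequality \eqref{eq:riccati-M} should be used directly in the form
\[
 D^+M\le \frac{g_R^{\rm src}}{r_*}M-\frac{\nu M}{r_*^2}+C_aM .
\]
Young's inequality gives $g/r_*\le \nu/r_*^2+g^2/(4\nu)$, but this requires $g\in L^2$, which \eqref{eq:intro-source-inflow} does not give.
\item If $r_*\ge R$, then $M\le \Gamma_*/R$ is bounded, and nothing needs to be shown.
\end{itemize}

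The better route is to work with $\tau$ and $s$ as in the proof of \Cref{thm:cumulative-compression-intro}. Set $w=\tau M^2/\nu$ and write $\rho=r_*$. The term $g/\rho$ is absorbed as follows. The hypothesis \eqref{eq:intro-source-inflow} applied on dyadic intervals $I_k=[T-2^{-k}(T-a),\,T-2^{-k-1}(T-a)]$ gives
\[
 \int_{I_k}g\le A\sqrt{\nu|I_k|}\sim A\sqrt{\nu\tau},
\]
so on each unit $s$-block $\int \tau g\,ds\lesssim A\sqrt{\nu}\,\tau^{1/2}$. The strategy is then a dichotomy on each block.
\begin{itemize}
\item If $\rho\gtrsim\sqrt{\nu\tau}$ on the block, then the contribution $\int g/\rho\,dt\le A$ is $O(1)$ per block.
\item If $\rho\ll\sqrt{\nu\tau}$, then the viscous term $\nu/\rho^2\gg1/\tau$ dominates. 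That forces $M$ to decay on that block at a rate beating any $O(1)$ compression per block, while the net growth of $\log M$ over the block is still bounded by $\int(g/\rho-\nu/\rho^2)\le\sup_\rho(g\rho-\nu)/\rho^2$.
\end{itemize}
Combining the two cases, $\log M$ grows by at most $K(A)$ per $s$-block. That alone only gives $M\lesssim \tau^{-K}$, which is not enough.

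To close, I would instead aim to bound $\int_a^t g/\rho$ by $\frac12 s$ plus a sublinear term, using the interpolation $g/\rho\le \epsilon\nu/\rho^2+g^2/(4\epsilon\nu)$ only where $g\lesssim\sqrt{\nu/\tau}$, and the interval bound elsewhere. If the per-block compression is at most $c<c_*$, then \Cref{thm:cumulative-compression-intro} applies. Checking that the constant can be made below $c_*$ independently of $A$ is the main obstacle. I expect it may require a rescaling and localization argument: a blow-up sequence at the self-similar scale $\sqrt{\nu\tau}$, where the source inflow is bounded by $A$ in scale-invariant norm, followed by the $L^2_tL^\infty$ swirl criterion of Chen--Fang--Zhang as in \cref{lem:weighted-endpoint}.
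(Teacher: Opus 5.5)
\textbf{There is a genuine gap: the reduction to \cref{thm:cumulative-compression-intro} does not close, and the obstruction is structural.}

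The hypothesis \eqref{eq:intro-source-inflow} controls the radial velocity $u^r_{\rm src}$. The Riccati inequality \eqref{eq:riccati-M} instead consumes $U=u^r/r$ at the single point $x_*(t)$. Converting one into the other costs a factor $1/r_*$, and nothing bounds $r_*$ from below. Both of your devices fail quantitatively:
\begin{itemize}
\item Young's inequality $g/r_*-\nu/r_*^2\le g^2/(4\nu)$ needs $g\in L^2_t$. The window condition allows $g(t)=c\sqrt\nu\,|t-t_0|^{-1/2}$ with $c\sim A$, for which $\int g^2=\infty$. \Cref{rem:source-inflow-spikes} explicitly permits such spikes, so even your ``$K(A)$ per $s$-block'' is not established.
\item In the favourable case $r_*\gtrsim\sqrt{\nu\tau}$, the block bound is $O(A)$, so $\int_a^t\sigma\lesssim A\,s(t)$. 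The slope is proportional to $A$.
\end{itemize}
\Cref{thm:cumulative-compression-intro} has the fixed threshold $c_*=\tfrac12+\nu^2/(6\Gamma_*^2)$, while $A$ is arbitrary. An argument that only yields $\int_a^t\sigma\le C(A)\,s$ cannot conclude once $C(A)\ge c_*$. Your closing ``blow-up at scale $\sqrt{\nu\tau}$'' is not an argument: no compactness is available here without Type-I bounds.

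\textbf{The missing idea is to work with the circulation rather than the swirl maximum, and to aim at a criterion that needs only \emph{some} decay rate.} This is what the paper does.
\begin{itemize}
\item $\Gamma$ obeys \eqref{eq:Gamma}, which has no stretching term.
\item Let $q(r,t)$ solve $q_t=\nu q_{rr}+g_R(t)q_r$ on $(0,R)$, with $q(0,t)=0$, $q(R,t)=1$, and increasing initial data dominating $|\Gamma(a)|/\Gamma_*$. Then $q_r\ge0$. Since $-u^r\le g_R$ on $r<R$, the coefficient $(g_R+u^r+\nu/r)q_r$ is nonnegative, so $\Gamma_*q$ is a supersolution of \eqref{eq:Gamma}. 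Hence $|\Gamma|\le\Gamma_*q$.
\item Set $X(t)=\int_a^tg_R$. In the frame $x=r+X(t)$, $q$ satisfies the heat equation on a half-line whose endpoint obeys $|X(t)-X(s)|\le A'\sqrt{\nu|t-s|}$. This is exactly what the window hypothesis says.
\item Boundary oscillation decay on parabolic boxes of duration $c(A)\rho^2/\nu$ then gives $|\Gamma|\le Cr^{\alpha}$ with some $\alpha=\alpha(A)>0$.
\item Wei's criterion \cite{Wei2016} needs only $|\Gamma|\lesssim|\log r|^{-3/2}$ near the axis. So the size of $A$ affects only $\alpha$, never the conclusion.
\end{itemize}

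The inherited current enters only as an additive $R\|U_{\rm old}\|_\infty$ in $g_R$. Your bound $\sup\|U_{\rm old}\|_\infty\le C_a$ from \cref{prop:bounded-fixed-cut-tags} is enough for that step; the paper uses $\int_I\|U_{\rm old}\|_\infty\le C|I|^{3/4}$.
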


The inherited radial current is estimated separately and becomes
negligible in the normalized short-window displacement.

\begin{proof}[Proof of \Cref{thm:source-inflow-intro}]
We first state the comparison for the total radial velocity.  Put
\[
 g_R(t)=\sup_{0<r<R,z}(-u^r(r,z,t))_+.
\]
After rescaling time to $\nu=1$, let $q$ solve
\[
 q_t=q_{rr}+g_R(t)q_r,\qquad q(0,t)=0,\quad q(R,t)=1,
\]
with initial data $q(r,a)=\min(1,kr)$, where $k$ dominates the initial circulation profile.  Since $q_r\ge0$, the circulation operator applied to $\Gamma_*q$ has the correct supersolution sign.  The circulation maximum principle therefore gives
\[
 |\Gamma(r,z,t)|\le \Gamma_*q(r,t),\qquad 0<r<R.
\]

Let
\[
 X(t)=\int_a^t g_R(q)\,dq,
 \qquad \varphi(x,t)=q(x-X(t),t).
\]
Then $\varphi_t=\varphi_{xx}$ on the moving interval
$X(t)<x<X(t)+R$, with $\varphi(X(t),t)=0$.  The interval hypothesis implies the $1/2$-H\"older bound
\[
 |X(t)-X(s)|\le A\sqrt{|t-s|}.
\]
Fix a dyadic radius $\rho_j=2^{-j}R$.  During a time interval of length $c\rho_j^2$, the left boundary moves by at most $A\sqrt c\,\rho_j$.  Choosing $c=c(A)>0$ small, every rescaled space-time box contains a fixed interior parabolic box, independent of $j$.  The parabolic maximum principle together with the interior Harnack/oscillation lemma for the heat equation therefore gives a scale-independent number $0<\theta_A<1$ such that
\[
 m_{j+1}\le\theta_A m_j,
 \qquad
 m_j:=\sup_{0<x-X(t)\le\rho_j}\varphi(x,t),
\]
for all sufficiently late scales.  Iteration yields
\[
 m_j\le C\theta_A^j=C\rho_j^\alpha,
 \qquad \alpha=-\log_2\theta_A>0.
\]
Returning to $\Gamma$ gives
\[
 |\Gamma(r,z,t)|\le Cr^\alpha
\]
uniformly near the axis for late times.  Since every positive power dominates the logarithmic modulus $|\log r|^{-3/2}$ as $r\downarrow0$, Wei's criterion \cite{Wei2016} gives continuation.

For the source-only hypothesis, write
\[
 g_R(t)\le g_R^{\rm src}(t)+R\|U_{\rm old}(t)\|_\infty.
\]
For the homogeneous current, the $L^2$ energy identity gives
\[
 \sup_{t\ge a}\|G_{\rm old}(t)\|_2^2
 +2\nu\int_a^T\|\nabla G_{\rm old}(t)\|_2^2\,dt
 \le C(a).
\]
The Hodge interpolation estimate
\[
 \|U_{\rm old}\|_\infty
 \le C\|G_{\rm old}\|_2^{1/2}\|\nabla G_{\rm old}\|_2^{1/2}
\]
and H\"older in time therefore give, for every interval $I$,
\[
 \int_I R\|U_{\rm old}(t)\|_\infty\,dt
 \le C(a,R,\nu)|I|^{3/4}
 =o(\sqrt{\nu|I|})
 \quad(|I|\downarrow0).
\]
Hence the inherited current changes only the constant in the moving-boundary estimate and is negligible on the normalized short windows.  The preceding comparison applies to the total inflow, so continuation follows.
\end{proof}

\begin{remark}\label{rem:source-inflow-spikes}
The theorem does not require $(-u_{\rm src}^r)_+\lesssim (T-t)^{-1/2}$ pointwise.  Arbitrarily high spikes are allowed if their time integral on every window remains at most a fixed multiple of $\sqrt{\nu|I|}$.  Conversely, any singular history must violate this diffusive displacement bound on arbitrarily short windows after every fixed regular cut.
\end{remark}

\begin{proof}[Proof of \cref{thm:signed-source-intro}]
The old current solves the homogeneous transport--diffusion equation with
the \emph{actual} drift.  The physical $L^2$ energy and Hodge--Agmon
estimates, as used above, give
\[
 \int_a^T\|U_{\rm old}(t)\|_\infty\,dt<\infty.
\]
They depend on the strong Sobolev datum at the fixed cut, but not on
$\|\Gamma(a)\|_\infty$.  The exact quartic balance and the decomposition
$U=U_{\rm old}+U_{\rm src}$ yield
\[
 \frac14\log\frac{V_4(t)}{V_4(a)}
  =N_a(t)-\int_a^t\langle U_{\rm old}(q)\rangle_{\pi_4(q)}\,dq.
\]
The absolute value of the last integral is uniformly bounded by
$C_a=\int_a^T\|U_{\rm old}\|_\infty$.  Therefore
\[
 V_4(t)^2\le V_4(a)^2e^{8C_a}e^{8N_a(t)}.
\]
The hypothesis of the theorem gives
$\int_a^tV_4(q)^2dq=o(s(t))$.  The proof of
\cref{thm:quartic-dose-intro} uses only the coupled $J,G$ energy,
physical Hodge/Sobolev estimates, and the angular regularity criterion;
it requires no bound on global circulation.  Applying it proves
continuation.  Conversely, if the solution extends smoothly through $T$,
then $V_4$ is bounded on the compact terminal interval.  The identity
above and the old-current bound show that $e^{8N_a}$ is bounded there;
its time integral is finite, as asserted.
\end{proof}

\begin{corollary}\label{cor:sourced-enstrophy-intro}
Under the hypotheses of \cref{thm:signed-source-intro}, put
\[
 Z_{\rm src}(t)=\int |G_{\rm src}(t,r,z)|^2r^3\,dr\,dz.
\]
There is an absolute constant $C_{\rm s}<\infty$ such that
\begin{equation}\label{eq:sourced-enstrophy-intro}
 N_a(B)\le C_{\rm s}\nu^{-3}\int_a^B Z_{\rm src}(q)^2\,dq.
\end{equation}
Consequently $\int_a^T Z_{\rm src}(t)^2dt<\infty$ implies smooth
continuation.  One may take $C_{\rm s}=4/(81\pi^2)$ with the reduced
measure above.  For actual strong solutions on a fixed finite horizon, smooth
continuation also implies this integrability; thus it is an equivalent
continuation condition in the stated class.  Its unsigned majorant is
nevertheless stronger as an \emph{a priori estimate} than the signed
criterion and discards potentially important cancellation.
\end{corollary}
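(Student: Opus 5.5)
The plan is to bound the signed source work $\sigma_{\rm src}$ pointwise in time by a power of $Z_{\rm src}$ that beats the quartic dissipation $\nu D_4/V_4$, and then integrate.

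\emph{Pointwise bound.} Write
\[
 \sigma_{\rm src}(t)=\langle -U_{\rm src}(t)\rangle_{\pi_4(t)},
\]
the average of $-U_{\rm src}$ against the probability density $\pi_4=|u^\theta|^4/V_4$ in $d\mu$. Hölder gives
\[
 \sigma_{\rm src}\le V_4^{-1}\|U_{\rm src}\|_{L^p(d\mu)}\,\|u^\theta\|_{L^{4p'}(d\mu)}^4 .
\]
The exponent $p$ is to be chosen so that two things hold.

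First, $U_{\rm src}=-\partial_zP_5G_{\rm src}$ is controlled by the lifted quantity $Z_{\rm src}=\|G_{\rm src}\|_{L^2(dm_5)}^2$. The natural route is five-dimensional Hardy--Littlewood--Sobolev for $\nabla P_5$. Since $\nabla P_5$ gains one derivative in five dimensions, it maps $L^2(dm_5)$ into $L^{10/3}(dm_5)$.

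Second, the receiver $|u^\theta|^4=r^4F^4$, rewritten in lifted variables, is controlled by $V_4$ together with the dissipation $D_4$. I would use Gagliardo--Nirenberg on $w=|u^\theta|^2$, noting that
\[
 D_4\ge \tfrac34\|\nabla w\|_{L^2(d\mu)}^2+\|w/r\|_{L^2(d\mu)}^2 .
\]
Combining the two estimates should give
\[
 \sigma_{\rm src}\le C\,Z_{\rm src}^{1/2}\Bigl(\frac{D_4}{V_4}\Bigr)^{\theta},\qquad \theta<1 .
\]
By scaling, the exponents must be homogeneous. Young's inequality then yields
\[
 \sigma_{\rm src}-\nu\frac{D_4}{V_4}\le C_{\rm s}\nu^{-3}Z_{\rm src}^2 .
\]
The power $\nu^{-3}$ and the square of $Z_{\rm src}$ are consistent with $\theta=3/4$, since $(1/2)/(1-3/4)=2$ and $\nu^{-\theta/(1-\theta)}=\nu^{-3}$. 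This pins down the Hölder/Sobolev exponents. The explicit constant $4/(81\pi^2)$ would come from sharp HLS/Sobolev constants in $\R^5$ (the $\pi^2$ from the five-dimensional sphere measure) together with the Young constants $3^3/4^4$-type factors. I would fix these only after confirming the exponent bookkeeping.

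\emph{Integration.} Integrating the pointwise bound over $[a,B]$ gives \eqref{eq:sourced-enstrophy-intro}. If $\int_a^TZ_{\rm src}^2<\infty$, then $N_a$ is bounded above on $[a,T)$. Hence $\int_a^te^{8N_a}\le C(T-a)=o(s(t))$, and \cref{thm:signed-source-intro} gives continuation.

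\emph{Converse.} If the solution extends smoothly through $T$, then $G$ is bounded in $L^2(dm_5)$ on $[a,T]$. The same holds for $G_{\rm old}$, by the $L^2$ contraction of the homogeneous transport--diffusion equation used in \cref{prop:bounded-fixed-cut-tags}, applied in the lifted measure. It follows that $Z_{\rm src}$ is bounded, and its square is integrable on the finite horizon.

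\emph{Main obstacle.} The hard step is the pointwise interpolation: matching the physical measure $d\mu$ of the quartic receiver to the lifted measure $dm_5$ of $Z_{\rm src}$, via $|u^\theta|^4\,d\mu=r^2F^4\,dm_5$. The receiver factor carries weights $r^2$, and these must be absorbed using the Hardy term $\int|u^\theta|^4/r^2\,d\mu$ inside $D_4$. I would first try the splitting
\[
 |U_{\rm src}|\,r^4F^4=\bigl(|U_{\rm src}|\bigr)\cdot\bigl(rF^2\bigr)^{\alpha}\cdots ,
\]
with the powers chosen so that each factor lands in a lifted $L^q$ space controlled by $\|\nabla w\|$, $\|w/r\|$, or $V_4$, with the exponents dictated by scaling.
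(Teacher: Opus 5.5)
\textbf{Gap: the pointwise bound.} Your outline has the same shape as the paper's: a pointwise bound $\sigma_{\rm src}\le C Z_{\rm src}^{1/2}(D_4/V_4)^{3/4}$, then Young's inequality, integration, the signed criterion, and a converse. But the pointwise bound itself, which carries the whole content of the corollary, is never proved. Scaling only tells you which exponents such an inequality would have to carry; it does not show that one holds. The route you set up also does not close as written. Your H\"older step puts $U_{\rm src}$ in $L^p(d\mu)$, while five-dimensional HLS controls $\partial_zP_5G_{\rm src}$ in $L^{10/3}(dm_5)$, and these norms differ by the weight $r^2$. You flag this mismatch as the main obstacle but leave it open. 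So you establish neither the existence of $C_{\rm s}$ nor the value $4/(81\pi^2)$, and that value does not come from sharp five-dimensional constants in any case.

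\textbf{The missing idea.} Move one power of $r$ onto the velocity and work in $\mathbb R^3$. With $w=(u^\theta)^2$ and $u^r_{\rm src}=rU_{\rm src}$ one has $U_{\rm src}|u^\theta|^4=u^r_{\rm src}\,(w/r)\,w$. The vorticity of the divergence-free meridional field $b_{\rm src}$ is $rG_{\rm src}e_\theta$, so $\|\nabla b_{\rm src}\|_{L^2(\mathbb R^3)}^2=2\pi Z_{\rm src}$; the lifted quantity is exactly the physical source enstrophy. H\"older with exponents $6,2,3$, the Sobolev bound $\|b_{\rm src}\|_6\le S_3^{1/2}\|\nabla b_{\rm src}\|_2$, and $\|w\|_3\le\|w\|_2^{1/2}\|w\|_6^{1/2}$ then give
\[
\sigma_{\rm src}\le S_3^{3/4}\sqrt{2\pi Z_{\rm src}}\,X^{1/2}(4Y/3)^{1/4},
\]
where $X=\|w/r\|_2^2/\|w\|_2^2$ and $Y=\tfrac34\|\nabla w\|_2^2/\|w\|_2^2$ satisfy $X+Y=D_4/V_4$. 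The Hardy part of $D_4$ enters directly as $\|w/r\|_2$, with no change of measure. Maximizing at $X=2Y$ gives the factor $\tfrac23(D_4/V_4)^{3/4}$. Then $qx^{3/4}-\nu x\le 27q^4/(256\nu^3)$ with $S_3=4^{2/3}/(3\pi^{4/3})$ gives exactly $C_{\rm s}=4/(81\pi^2)$.

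\textbf{A cruder repair of your route.} Take $p=2$. Then $\|U_{\rm src}\|_{L^2(d\mu)}\le Z_{\rm src}^{1/2}$, because $(u^r/r)^2\le|\nabla b_{\rm src}|^2$, and $\|w\|_4^2\le S_3^{3/4}\|w\|_2^{1/2}\|\nabla w\|_2^{3/2}$. This closes the estimate, but it yields $C_{\rm s}=16/(27\pi^2)$, not the stated constant.

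\textbf{The converse.} $L^2$ contraction in the lifted measure is false, since $\operatorname{div}_{dm_5}b=2U\ne0$. You only get $\frac{d}{dt}\|G_{\rm old}\|_{L^2(dm_5)}^2\le2\|U\|_\infty\|G_{\rm old}\|_{L^2(dm_5)}^2$. Your conclusion still holds because $\|U\|_\infty$ is bounded on the closed smooth horizon, which is exactly how the paper argues through $\eta_{\rm old}=rG_{\rm old}$.
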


\begin{proof}[Proof of \cref{cor:sourced-enstrophy-intro}]
Let $w=(u^\theta)^2$.  The full-space Sobolev inequality
$\|f\|_6\le S_3^{1/2}\|\nabla f\|_2$, where
$S_3=4^{2/3}/(3\pi^{4/3})$, and H\"older interpolation give
\[
 |\sigma_{\rm src}|
 \le\frac{\|b_{\rm src}\|_6\|w/r\|_2\|w\|_3}
               {\|w\|_2^2}
 \le S_3^{3/4}\|\nabla b_{\rm src}\|_2
    \left(\frac{\|w/r\|_2^2}{\|w\|_2^2}\right)^{1/2}
    \left(\frac{\|\nabla w\|_2^2}{\|w\|_2^2}\right)^{1/4}.
\]
The norms in this display are full three-dimensional norms.  Put
\[
 X=\frac{\|w/r\|_2^2}{\|w\|_2^2},\qquad
 Y=\frac34\frac{\|\nabla w\|_2^2}{\|w\|_2^2}.
\]
Then $a_4=D_4/V_4=X+Y$, and optimization at $X=2Y$ yields
$X^{1/2}(4Y/3)^{1/4}\le(2/3)a_4^{3/4}$.  The whole-space
curl--gradient identity for the divergence-free meridional field gives
$\|\nabla b_{\rm src}\|_{2,\mathrm{full}}^2=2\pi Z_{\rm src}$.
Hence
\[
 \sigma_{\rm src}\le
 \frac{4\sqrt2}{3^{7/4}\sqrt\pi}
       \sqrt{Z_{\rm src}}\,a_4^{3/4}.
\]
For $q\ge0$ and $x\ge0$, Young's inequality in optimized form is
\[
 qx^{3/4}-\nu x\le\frac{27q^4}{256\nu^3}.
\]
Substitute $q=4\sqrt2\sqrt{Z_{\rm src}}/(3^{7/4}\sqrt\pi)$ and
$x=a_4$.  This gives
\[
 N_a'(t)\le\frac4{81\pi^2\nu^3}Z_{\rm src}(t)^2.
\]
Integrating and applying \cref{thm:signed-source-intro} proves the
corollary.  This direct Sobolev constant is sufficient; no claim of
optimality is made.  The estimated source current is the genuine
post-cut Hodge field and the quartic dissipation is charged once.

For the converse, suppose the actual strong solution extends smoothly through
$T$.  Put $\eta_{\rm old}=rG_{\rm old}$.  Its homogeneous equation gives
\[
 (\partial_t+b\cdot\nabla)\eta_{\rm old}
 =\nu(\Delta_3-r^{-2})\eta_{\rm old}+U\eta_{\rm old}.
\]
The physical energy identity and boundedness of $U$ on the compact smooth
horizon give
\[
 \sup_{a\le t\le T}\|\eta_{\rm old}(t)\|_2<\infty.
\]
The physical vorticity $\omega^\theta=rG$ is also uniformly bounded
in $L^2$.  Hence $rG_{\rm src}=\omega^\theta-\eta_{\rm old}$
is uniformly bounded in $L^2$, and
$Z_{\rm src}(t)=(2\pi)^{-1}\|rG_{\rm src}(t)\|_2^2$
is bounded.  Since $T-a<\infty$, its squared time integral is finite.
This converse uses smooth continuation only in the necessity direction,
not in the estimate leading to continuation.
\end{proof}

\subsection*{The critical $L^4$-weighted compression receiver}
Let $v=u^\theta$ and
\[
 \Delta_3=\partial_r^2+\frac1r\partial_r+\partial_z^2,
 \qquad P_5=(-\Delta_5)^{-1},
 \qquad \mathcal A=-\partial_zP_5.
\]
Then $U=\mathcal A G$.  Define
\[
 \mathcal M h:=\mathcal A\partial_zh=-\partial_zP_5\partial_zh,
 \qquad
 \mathcal C:=[b\cdot\nabla,\mathcal A]G.
\]
Since $\mathcal A$ commutes with $\Delta_5$, applying $\mathcal A$ to \eqref{eq:G} gives
\begin{equation}\label{eq:Ut}
 (\partial_t+b\cdot\nabla)U
 =\nu\Delta_5U+\mathcal M(F^2)+\mathcal C.
\end{equation}

Assume $v$ is not identically zero.  Put
\[
 V_4(t)=\int |v|^4\,d\mu,
 \qquad
 d\pi_4=\frac{|v|^4}{V_4}\,d\mu,
 \qquad
 \sigma_4(t)=-\int U\,d\pi_4.
\]
Thus $\pi_4$ is a probability measure.  Define $d_4$ $\pi_4$-almost everywhere by
\[
 d_4=3\frac{|\nabla v|^2}{|v|^2}+\frac1{r^2},
\]
with all weighted products interpreted through
\[
 |v|^4d_4=3|v|^2|\nabla v|^2+\frac{|v|^4}{r^2}.
\]

\begin{proposition}\label{prop:sigma4}
At every smooth preterminal time with $V_4>0$,
\begin{equation}\label{eq:sigma4prime}
\boxed{
\begin{aligned}
 \sigma_4'={}&-\mathbb E_{\pi_4}[\mathcal M(F^2)+\mathcal C]
 +4\operatorname{Var}_{\pi_4}(U)\\
 &-\nu\mathbb E_{\pi_4}[(\Delta_5+\Delta_3)U]
 +4\nu\operatorname{Cov}_{\pi_4}(U,d_4).
\end{aligned}}
\end{equation}
\end{proposition}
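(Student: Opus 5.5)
The identity \eqref{eq:sigma4prime} should follow from a direct first-variation computation: differentiate the quotient $\sigma_4=-V_4^{-1}\int U w\,d\mu$, where $w=|v|^4$, substitute the evolution laws of $U$ and $w$, and regroup the terms into $\pi_4$-means, variances and covariances. The equation for $U$ is \eqref{eq:Ut}. For $w$ I will use the swirl equation
\[
 \partial_t v+b\cdot\nabla v=\nu(\Delta_3-r^{-2})v-Uv .
\]
Multiplying by $4|v|^2v$ and using the pointwise identity
\[
 4|v|^2v\,\Delta_3v=\Delta_3(|v|^4)-12|v|^2|\nabla v|^2,
\]
I expect the quartic density to satisfy
\[
 (\partial_t+b\cdot\nabla)w=\nu\Delta_3 w-4\nu\,w\,d_4-4Uw,
\]
with $w\,d_4$ read as $3|v|^2|\nabla v|^2+|v|^4/r^2$, exactly as in the convention after the definition of $d_4$. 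This avoids any division by $v$ where $v$ vanishes.

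Next I integrate this equation against $d\mu$. The transport term drops out because $b$ is divergence free with respect to $d\mu=r\,dr\,dz$, and $\int\Delta_3 w\,d\mu=0$ by decay and axis regularity. This gives
\[
 V_4'=-4\nu\int w\,d_4\,d\mu-4\int Uw\,d\mu,
 \qquad\text{so}\qquad
 \frac{V_4'}{V_4}=-4\nu\,\mathbb E_{\pi_4}[d_4]-4\,\mathbb E_{\pi_4}[U].
\]
Then I write
\[
 \sigma_4'=-\frac1{V_4}\int U_t\,w\,d\mu-\frac1{V_4}\int U\,w_t\,d\mu+\mathbb E_{\pi_4}[U]\,\frac{V_4'}{V_4}.
\]
Into the first integral I insert \eqref{eq:Ut}, and into the second the $w$-equation, and track the pieces as follows.
\begin{itemize}
\item The transport pieces $\int(b\cdot\nabla U)\,w$ and $\int U\,(b\cdot\nabla w)$ cancel after one integration by parts, again by $\nabla\cdot b=0$ in $d\mu$.
\item The term $\nu\Delta_5U$ from \eqref{eq:Ut} contributes $-\nu\,\mathbb E_{\pi_4}[\Delta_5U]$ directly.
\item The term $\nu\Delta_3 w$ contributes $-\nu\int U\Delta_3 w\,d\mu=-\nu\int w\,\Delta_3U\,d\mu$, since $\Delta_3$ is symmetric on $L^2(d\mu)$.
\item The source $\mathcal M(F^2)+\mathcal C$ gives $-\mathbb E_{\pi_4}[\mathcal M(F^2)+\mathcal C]$.
\item The reaction terms from $w_t$ give $+4\,\mathbb E_{\pi_4}[U^2]+4\nu\,\mathbb E_{\pi_4}[Ud_4]$.
\end{itemize}
Adding $\mathbb E_{\pi_4}[U]\,V_4'/V_4$ turns these last two into $4\operatorname{Var}_{\pi_4}(U)+4\nu\operatorname{Cov}_{\pi_4}(U,d_4)$, which is exactly \eqref{eq:sigma4prime}.

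The algebra is routine; the main obstacle is justifying each integration by parts at a smooth preterminal time. Three points need care.
\begin{itemize}
\item Axis boundary terms of the form $r\,w\,\partial_rU$ and $r\,U\,\partial_r w$ must vanish at $r=0$. I would use axis-compatibility ($v=O(r)$, so $w=O(r^4)$, and $U$ smooth and even in $r$) together with the $H^m$ decay at infinity.
\item The products $w\,\Delta_5U$, $w\,\mathcal C$ and $w\,U/r^2$ must be integrable. Here $|v|^4/r^2=O(r^2)$ near the axis, so this is not the delicate part.
\item The set $\{v=0\}$ must not cause trouble. Since $|v|^4$ is $C^2$ and $d_4$ only ever appears multiplied by $w$, I would state the computation for the weighted products throughout and never for $d_4$ alone.
\end{itemize}
Differentiating under the integral sign is legitimate by the regularity of the strong solution on compact subintervals of $[0,T)$.
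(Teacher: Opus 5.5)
Your proposal is correct and follows the paper's proof essentially step for step. You derive $D_tw=-4Uw+\nu\Delta_3w-4\nu d_4w$, integrate it to get $V_4'/V_4$, differentiate the quotient $-I_4/V_4$ using \eqref{eq:Ut}, cancel transport by $\mu$-incompressibility, move $\Delta_3$ onto $U$ by self-adjointness, and regroup into the variance and covariance; your remark that $|v|^4=v^4$ is itself smooth, so $w\,d_4$ never requires dividing by $v$, is a slightly cleaner treatment of the zero set than the paper's approximation across sign components.
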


\begin{proof}
Write $D_t=\partial_t+b\cdot\nabla$ and $w=|v|^4$.  The swirl equation is
\[
 D_tv=-Uv+\nu(\Delta_3-r^{-2})v.
\]
Using $\Delta_3(v^4)=4v^3\Delta_3v+12v^2|\nabla v|^2$ on the sign components of $v$ and then approximating across its zero set gives
\begin{equation}\label{eq:w-evol}
 D_tw=-4Uw+\nu\Delta_3w-4\nu d_4w.
\end{equation}
The meridional field is divergence free with respect to $d\mu=r\,dr\,dz$, so material transport contributes no boundary term.  Hence
\begin{equation}\label{eq:V4rate}
 \frac{V_4'}{V_4}
 =-4\mathbb E_{\pi_4}[U]-4\nu\mathbb E_{\pi_4}[d_4]
 =4\sigma_4-4\nu\mathbb E_{\pi_4}[d_4].
\end{equation}
Let $I_4=\int Uw\,d\mu$, so $\sigma_4=-I_4/V_4$.  Combining \eqref{eq:Ut} and \eqref{eq:w-evol} yields
\[
 \begin{aligned}
 I_4'={}&\int[\mathcal M(F^2)+\mathcal C+\nu\Delta_5U]w\,d\mu
 -4\int U^2w\,d\mu\\
 &+\nu\int U\Delta_3w\,d\mu-4\nu\int Ud_4w\,d\mu.
 \end{aligned}
\]
By self-adjointness of $\Delta_3$ in the physical axisymmetric measure,
\[
 \int U\Delta_3w\,d\mu=\int w\Delta_3U\,d\mu.
\]
Now differentiate $\sigma_4=-I_4/V_4$ and use \eqref{eq:V4rate}.  The $U^2$ terms combine into $4\operatorname{Var}_{\pi_4}(U)$ and the $d_4$ terms into $4\nu\operatorname{Cov}_{\pi_4}(U,d_4)$.  The remaining terms are exactly those displayed in \eqref{eq:sigma4prime}.
\end{proof}

Fix a regular cut $a<T$ and set
\[
 \tau=T-t,
 \qquad
 s=\log\frac{T-a}{\tau},
 \qquad
 \widehat\sigma_4(s)=\tau\sigma_4(t(s)).
\]
Define
\begin{equation}\label{eq:F4}
\boxed{
\begin{aligned}
 \mathscr F_4(s)=\tau^2\Big\{&-\mathbb E_{\pi_4}[\mathcal M(F^2)+\mathcal C]
 +4\operatorname{Var}_{\pi_4}(U)\\
 &-\nu\mathbb E_{\pi_4}[(\Delta_5+\Delta_3)U]
 +4\nu\operatorname{Cov}_{\pi_4}(U,d_4)\Big\}.
\end{aligned}}
\end{equation}
Since $ds/dt=1/\tau$, \eqref{eq:sigma4prime} is equivalent to
\begin{equation}\label{eq:sigmahat}
 \boxed{\partial_s\widehat\sigma_4+\widehat\sigma_4=\mathscr F_4.}
\end{equation}

\begin{theorem}\label{thm:q4receiver}
If
\begin{equation}\label{eq:F4criterion}
 \limsup_{S\to\infty}\frac1S\int_0^S[\mathscr F_4(s)]_+\,ds<\frac18,
\end{equation}
then the solution continues through $T$.
\end{theorem}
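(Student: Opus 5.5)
Solve the linear ODE \eqref{eq:sigmahat} explicitly, convert the averaged bound on $[\mathscr F_4]_+$ into a bound on the cumulative compression $\int_a^t\sigma_4$, and feed that into the quartic balance \eqref{eq:V4rate} and the quartic-dose criterion \cref{thm:quartic-dose-intro}. Variation of constants gives
\[
 \widehat\sigma_4(s)=e^{-s}\widehat\sigma_4(0)+\int_0^se^{-(s-\rho)}\mathscr F_4(\rho)\,d\rho
 \le e^{-s}\widehat\sigma_4(0)+\int_0^se^{-(s-\rho)}[\mathscr F_4(\rho)]_+\,d\rho .
\]
Since $dt=\tau\,ds$, we have $\int_a^{t(S)}\sigma_4\,dq=\int_0^S\widehat\sigma_4\,ds$. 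Integrating the bound in $s$ and using Fubini with $\int_\rho^S e^{-(s-\rho)}ds\le1$ gives
\[
 \int_a^{t(S)}\sigma_4\,dq\le|\widehat\sigma_4(0)|+\int_0^S[\mathscr F_4]_+\,d\rho .
\]
The hypothesis \eqref{eq:F4criterion} then yields $\int_a^{t}\sigma_4\le c\,s(t)+C$ for some $c<1/8$ and all late $t$.

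Next, integrate \eqref{eq:V4rate} and drop the nonnegative dissipation $\nu\mathbb E_{\pi_4}[d_4]$:
\[
 V_4(t)\le V_4(a)\exp\Big(4\int_a^t\sigma_4\Big)\le C\,e^{4cs}=C\Big(\frac{T-a}{T-t}\Big)^{4c}.
\]
Hence $V_4^2\le C(T-t)^{-8c}$ with $8c<1$, so $\int_a^T V_4^2\,dq<\infty$. In particular $\int_a^tV_4^2=o(s(t))$, and \cref{thm:quartic-dose-intro} gives continuation. As in the proof of \cref{thm:signed-source-intro}, that route uses no circulation bound. Alternatively, $e^{8N_a}$ could be compared and \cref{thm:signed-source-intro} invoked, but the direct route is simpler.

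The main obstacle is bookkeeping rather than analysis. The threshold $1/8$ is generous: the argument only needs $4c<1/2$, which holds. One must also check that $\widehat\sigma_4(0)=(T-a)\sigma_4(a)$ is finite, which follows from smoothness at the regular cut. Finally, \eqref{eq:sigmahat} must hold on all of $[0,\infty)$ so that the integrating-factor argument is legitimate. This needs $V_4>0$ throughout $[a,T)$, which follows from \eqref{eq:V4rate}: it gives $V_4(t)\ge V_4(a)\exp(-4\int_a^t(\|U\|_\infty+\nu\mathbb E_{\pi_4}[d_4]))>0$ on every compact subinterval, using smoothness of the solution before $T$. With these checks the argument closes.
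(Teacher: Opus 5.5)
Your argument is correct and is essentially the paper's proof. Variation of constants plus Fubini for \eqref{eq:sigmahat}, then $\frac{d}{dt}\log V_4\le 4\sigma_4$, gives $V_4\lesssim (T-t)^{-4c}$ with $8c<1$. The paper then concludes directly from $u^\theta\in L^8_tL^4_x$ and the angular Prodi--Serrin criterion of Chen--Fang--Zhang. Your detour through \cref{thm:quartic-dose-intro} ends in that same criterion. Note also that $1/8$ is exactly the threshold $8c<1$ this argument needs, not a generous one.

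Your justification of $V_4>0$ is circular, because smoothness does not bound $\mathbb E_{\pi_4}[d_4]=D_4/V_4$ as $V_4\to0$. Instead, either observe that the hypothesis already presupposes $\mathscr F_4$ defined (hence $V_4>0$) on all of $[a,T)$, or note the following. If $u^\theta(t_1)\equiv0$ for some $t_1<T$, uniqueness for the linear swirl equation keeps it zero, and the resulting swirl-free flow is globally regular, so continuation is automatic.
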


\begin{proof}
Variation of constants in \eqref{eq:sigmahat} gives
\[
 \widehat\sigma_4(s)
 \le e^{-s}[\widehat\sigma_4(0)]_+
 +\int_0^s e^{-(s-\xi)}[\mathscr F_4(\xi)]_+\,d\xi.
\]
Integrating in $s$ and changing the order of integration yields
\begin{equation}\label{eq:sigma-int-bound}
 \int_0^S\widehat\sigma_4(s)\,ds
 \le [\widehat\sigma_4(0)]_++\int_0^S[\mathscr F_4(s)]_+\,ds.
\end{equation}
The exact $L^4$ swirl identity also gives
\[
 \frac d{dt}\log\|v(t)\|_4\le\sigma_4(t),
 \qquad
 \frac d{ds}\log\|v(t(s))\|_4\le\widehat\sigma_4(s).
\]
Choose $c$ strictly between the limsup in \eqref{eq:F4criterion} and $1/8$.  Equation \eqref{eq:sigma-int-bound} gives, for all sufficiently large $S$,
\[
 \|v(t(S))\|_4\le Ce^{cS}=C(T-t(S))^{-c}.
\]
Because $dt=(T-a)e^{-s}\,ds$,
\[
 \int_a^T\|u^\theta(t)\|_4^8\,dt
 \le C\int_0^\infty e^{(8c-1)s}\,ds<\infty.
\]
Thus $u^\theta\in L_t^8L_x^4$ on the terminal interval, and the angular Prodi--Serrin criterion of Chen--Fang--Zhang \cite{ChenFangZhang2017} gives continuation.
\end{proof}

The necessary inequality corresponding to this theorem is included in
\eqref{eq:nec-F4}.

\begin{remark}
The value $1/8$ is the critical time-growth exponent associated with the angular endpoint $L_t^8L_x^4$.  The criterion is not asserted to be a sharp regularity threshold.  Its importance is structural: the direct source, transport commutator, variance and viscous terms enter through one exact signed forcing $\mathscr F_4$.  A termwise absolute estimate is sufficient but may be much stronger than the receiver actually consumes.
\end{remark}

For later source-history estimates it is useful to split the source without imposing reflection symmetry. Fix $a<t<T$, put
\[
 h=F^2,\qquad s_+=(\partial_zh)_+,\qquad s_-=(-\partial_zh)_+,
\]
and solve, with the actual drift $b$,
\[
 (\partial_t+b\cdot\nabla-\nu\Delta_5)q_\pm=s_\pm,
 \qquad q_\pm(a)=0.
\]
Then $q_\pm\ge0$ by the parabolic maximum principle and
\[
 G_{\rm src}=q_+-q_-.
\]
Let $-\Delta_5\Psi_\pm=q_\pm$, $\Psi_\pm\ge0$, and $U_{\rm src}=-\partial_z(\Psi_+-\Psi_-)$. Then
\begin{equation}\label{eq:source-sign-split}
\boxed{
 -\int U_{\rm src}v^4\,d\mu
 =2\int r^2h\Bigl[\Psi_-s_+ +\Psi_+s_--\Psi_+s_+-\Psi_-s_-\Bigr]\,dm_5 .}
\end{equation}
Indeed,
\[
 \int \partial_z(\Psi_+-\Psi_-)r^2h^2\,dm_5
 =-2\int(\Psi_+-\Psi_-)r^2h\,\partial_zh\,dm_5,
\]
and $\partial_zh=s_+-s_-$. Thus opposite-sign and same-sign source interactions enter with opposite signs; no sign of their difference is imposed. With a smooth spatial cutoff $\chi$, integration by parts adds
\[
 -\int(\Psi_+-\Psi_-)r^2h^2\partial_z\chi\,dm_5.
\]
A $\chi_t$ term appears only when differentiating a moving local stock.

\medskip\noindent\emph{Inherited-current covariance.}
For the fixed cut $a$, let $U=U_{\rm old}+U_{\rm src}$ and
$\sigma_4^{\rm src}=-\mathbb E_{\pi_4}[U_{\rm src}]$.
Differentiating this source-weighted receiver involves the exact algebraic split
\[
4\operatorname{Cov}_{\pi_4}(U_{\rm src},U)
=4\operatorname{Var}_{\pi_4}(U_{\rm src})
 +4\operatorname{Cov}_{\pi_4}(U_{\rm src},U_{\rm old}).
\]
The last term has no fixed sign.  The fixed-cut bound
$U_{\rm old}\in L^1((a,T);L^\infty)$ alone does not control its
terminal time-weighted contribution without an estimate for
$U_{\rm src}$.  Any bound on the cumulative source response must retain
these inherited, transport, and diffusion terms, with only one physical
quartic viscous debit.

\section{Circulation barriers and geometric criteria}\label{sec:geometric}
Define
\[
 \mathrm{Pe}=-\frac{ru^r}{\nu}.
\]
\begin{proposition}\label{prop:peclet}
For $0<\alpha<2$ and $w(r)=r^\alpha$,
\begin{equation}\label{eq:peid}
 (\partial_t+b\cdot\nabla-\nu\Delta_*)w
 =\nu\alpha(2-\alpha-\mathrm{Pe})r^{\alpha-2},
 \qquad
 \Delta_*=\partial_r^2-r^{-1}\partial_r+\partial_z^2.
\end{equation}
Hence, on a cylinder $0<r<R$,
\[
 \mathrm{Pe}\le2-\alpha,
 \qquad
 |\Gamma|\le Cr^\alpha\ \text{on the parabolic boundary}
 \quad\Longrightarrow\quad
 |\Gamma|\le Cr^\alpha.
\]
\end{proposition}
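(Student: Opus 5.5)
The plan has two parts. First I verify the identity \eqref{eq:peid} by direct computation. Then I deduce the barrier statement from the circulation maximum principle applied to the operator in \eqref{eq:Gamma}.

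\emph{The identity.} Since $w=r^\alpha$ depends only on $r$ and is time independent, we have $\partial_tw=0$ and
\[
 b\cdot\nabla w=u^r\alpha r^{\alpha-1}=-\nu\,\mathrm{Pe}\,\alpha r^{\alpha-2}.
\]
For the diffusion term,
\[
 \Delta_*w=\alpha(\alpha-1)r^{\alpha-2}-\alpha r^{\alpha-2}=\alpha(\alpha-2)r^{\alpha-2}.
\]
Hence
\[
 -\nu\Delta_*w=\nu\alpha(2-\alpha)r^{\alpha-2},
\]
and adding the two contributions gives \eqref{eq:peid}. Note that $\Delta_*$ is exactly the operator $\Gamma_{rr}-r^{-1}\Gamma_r+\Gamma_{zz}$ of \eqref{eq:Gamma}, so $\Gamma$ solves $L\Gamma=0$ with
\[
 L=\partial_t+b\cdot\nabla-\nu\Delta_*.
\]

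\emph{The comparison.} Under $\mathrm{Pe}\le 2-\alpha$ and $\alpha>0$, the identity gives $Lw\ge0$ on the cylinder, so $Cw$ is a supersolution. The functions $Cw\mp\Gamma$ are then supersolutions that are nonnegative on the parabolic boundary, and I apply the weak maximum principle to them. The parabolic boundary here consists of the initial slice, the lateral boundary $r=R$, and $|z|\to\infty$, the last handled by decay of $\Gamma$ and boundedness of $w$ on $0<r<R$. If $z$ ranges over a bounded window, its top and bottom are included in the parabolic boundary as well.

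The axis $r=0$ needs care, because $\Delta_*$ is degenerate there and the axis is not part of the parabolic boundary on which data are prescribed. I would argue that $\Gamma=ru^\theta$ vanishes at the axis to second order for smooth axisymmetric fields, while $Cw=Cr^\alpha$ vanishes only to order $\alpha<2$. So $Cw-|\Gamma|\ge0$ in a neighborhood $r<\delta$ of the axis, uniformly on the compact time interval, with $\delta$ depending on the smoothness of the strong solution. Alternatively, the drift term $r^{-1}\partial_r$ with the favorable sign makes the axis an entrance-type boundary that needs no data.

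The conclusion then follows by a standard first-touching argument. Consider $Cw-\Gamma+\epsilon e^{t}$ on $\delta<r<R$ and suppose it first becomes negative at some interior point. At that point $L$ applied to it would have to be nonpositive, whereas $L(Cw-\Gamma+\epsilon e^t)=Lw\cdot C+\epsilon e^t>0$, a contradiction. Letting $\epsilon\to0$ gives $\Gamma\le Cw$, and the same argument with $-\Gamma$ gives $|\Gamma|\le Cr^\alpha$.

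The main obstacle is making the axis behavior rigorous, since neither $\mathrm{Pe}$ nor $w$ is controlled up to $r=0$ a priori. I would rely on the near-axis smoothness of the strong solution before $T$ (the regime in which the proposition is applied), which gives $\Gamma=O(r^2)$ near the axis.
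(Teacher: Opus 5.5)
Your proof is correct and follows the paper's argument: the same direct computation of $b\cdot\nabla w=-\nu\alpha\,\mathrm{Pe}\,r^{\alpha-2}$ and $\Delta_*w=\alpha(\alpha-2)r^{\alpha-2}$, then the parabolic maximum principle for $\pm\Gamma-Cr^\alpha$ on $\{r>\delta\}$, with the inner boundary dominated because $\Gamma=r^2F=O(r^2)$ while $\alpha<2$.

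Drop the alternative remark, and fix one wording. The axis is not an entrance boundary for $\Delta_*=\partial_r^2-r^{-1}\partial_r+\partial_z^2$: its radial part is the dimension-zero Bessel generator, whose drift points toward $r=0$. So the $O(r^2)$ vanishing of $\Gamma$ is genuinely needed, not optional. At large $|z|$, what you use is the lower bound $Cr^\alpha\ge C\delta^\alpha>0$ on $r\ge\delta$ (together with decay of $\Gamma$), not boundedness of $w$.
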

\begin{proof}
Since
\[
 w_r=\alpha r^{\alpha-1},
 \qquad
 \Delta_*w=\alpha(\alpha-2)r^{\alpha-2},
\]
and $u^r=-\nu\mathrm{Pe}/r$,
\[
 b\cdot\nabla w=u^rw_r
 =-\nu\alpha\mathrm{Pe}\,r^{\alpha-2},
\]
which gives \eqref{eq:peid}.  On $\varepsilon<r<R$, the hypothesis
$\mathrm{Pe}\le2-\alpha$ makes $Cr^\alpha$ a supersolution of the
circulation equation.  Apply the parabolic maximum principle to
$\Gamma-Cr^\alpha$ and $-\Gamma-Cr^\alpha$.  Smooth axis compatibility gives
$\Gamma=r^2F=O(r^2)$; since $\alpha<2$, the inner boundary $r=\varepsilon$
is dominated for all sufficiently small $\varepsilon$.  Let
$\varepsilon\downarrow0$.
\end{proof}

The implication is one-way:
\[
 \mathrm{Pe}\le2-\alpha\Longrightarrow |\Gamma|\lesssim r^\alpha.
\]
Its failure is not a singularity criterion.  Z.~Zhang's weighted radial
criterion \cite{ZujinZhang2018} is used independently.

\subsection*{Weighted circulation carriers}
Fix $p>3/2$, $r_0>0$, and
\[
 L(r)=\log\frac{r_0}{r},
 \qquad 0<r<r_0/e.
\]
For a nonzero sign branch of $\Gamma$, put
\[
 Q=|\Gamma|L^p,
 \qquad
 M_p(t)=\sup_{0<r<r_0/e,\,z}Q(r,z,t),
 \qquad
 \mathrm{Pe}=-\frac{ru^r}{\nu}.
\]
At a nonzero maximizer the sign of $\Gamma$ is fixed locally, so the equation
for $|\Gamma|$ is the equation for $\pm\Gamma$.

\begin{proposition}\label{prop:carrier-equation}
Let $\Delta=\partial_r^2+\partial_z^2$ and
\[
 B_p=\left(u^r+\frac\nu r\left(1-\frac{2p}{L}\right),u^z\right),
 \qquad
 V_p=\frac{\nu p}{r^2L}
 \left(2-\mathrm{Pe}-\frac{p+1}{L}\right).
\]
Then
\begin{equation}\label{eq:carrier-Q}
 \partial_tQ+B_p\cdot\nabla Q-\nu\Delta Q+V_pQ=0.
\end{equation}
If $x_c(t)=(r_c(t),z_c(t))$ is an interior derivative-realizing maximizer of
$Q$, define
\[
 L_c=L(r_c),
 \qquad
 \kappa_c=-\frac{r_c^2\Delta Q(x_c,t)}{Q(x_c,t)}\ge0.
\]
Then, for almost every such time,
\begin{equation}\label{eq:carrier-ledger}
 D_t^+\log M_p
 =\frac\nu{r_c^2}
 \left[
 \frac p{L_c}\left(\mathrm{Pe}_c-2+\frac{p+1}{L_c}\right)-\kappa_c
 \right].
\end{equation}
If $\kappa_c>0$ and
\[
 \Sigma_c=
 \frac{p(\mathrm{Pe}_c-2+(p+1)/L_c)}{L_c\kappa_c},
\]
then
\begin{equation}\label{eq:carrier-sigma}
 D_t^+\log M_p=\frac{\nu\kappa_c}{r_c^2}(\Sigma_c-1).
\end{equation}
\end{proposition}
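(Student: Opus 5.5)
The plan is a direct change of unknown in the circulation equation \eqref{eq:Gamma}, followed by the standard maximum-point argument used for \eqref{eq:riccati-M}. Near a point where $\Gamma\neq0$ the sign of $\Gamma$ is locally constant, so $|\Gamma|=\pm\Gamma$ solves \eqref{eq:Gamma} there. Write $|\Gamma|=Q\,\phi$ with $\phi=L^{-p}$. Since $L_r=-1/r$, one has
\[
 \phi_r=\frac{p}{rL}\phi,\qquad
 \phi_{rr}=\Bigl(-\frac{p}{r^2L}+\frac{p(p+1)}{r^2L^2}\Bigr)\phi .
\]
Substituting $\Gamma_r=\phi Q_r+Q\phi_r$ and $\Gamma_{rr}=\phi Q_{rr}+2\phi_rQ_r+Q\phi_{rr}$ into \eqref{eq:Gamma} and dividing by $\phi>0$, I would collect terms by order.

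\emph{First-order terms in $Q_r$.} These come from $u^r$, from $-\nu r^{-1}\Gamma_r$ and from $2\nu\phi_rQ_r$. They combine into the drift
\[
 u^r+\frac{\nu}{r}-\frac{2\nu p}{rL}=u^r+\frac\nu r\Bigl(1-\frac{2p}{L}\Bigr),
\]
which is the radial component of $B_p$; the axial component is unchanged.

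\emph{Zeroth-order terms.} These are $u^r\frac{p}{rL}Q$ from transport, together with $-\nu\phi_{rr}/\phi+\nu r^{-1}\phi_r/\phi$ from diffusion. Using $u^r=-\nu\mathrm{Pe}/r$, they sum to
\[
 \frac{\nu p}{r^2L}\Bigl(2-\mathrm{Pe}-\frac{p+1}{L}\Bigr)Q=V_pQ .
\]
This yields \eqref{eq:carrier-Q}. The restriction $r<r_0/e$ only guarantees $L>1$, so $\phi$ is smooth and positive.

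For the ledger \eqref{eq:carrier-ledger}, I would argue as in the proof of \cref{thm:cumulative-compression-intro}. $M_p$ is locally Lipschitz in time on the preterminal interval, because $Q$ is smooth and the supremum is attained at an interior point by hypothesis. Hence $M_p$ is differentiable almost everywhere. At such times, a derivative-realizing maximizer $x_c$ (Danskin/Hamilton lemma) satisfies $D_t^+M_p=\partial_tQ(x_c,t)$ and $\nabla Q(x_c,t)=0$. Evaluating \eqref{eq:carrier-Q} at $x_c$ then kills the $B_p$ term and gives
\[
 D_t^+M_p=\nu\Delta Q-V_pQ .
\]
Dividing by $Q(x_c)=M_p>0$ and inserting $\Delta Q=-\kappa_cQ/r_c^2$ gives
\[
 D_t^+\log M_p=-\frac{\nu\kappa_c}{r_c^2}-V_p(x_c),
\]
which is \eqref{eq:carrier-ledger} after expanding $V_p$. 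The sign $\kappa_c\ge0$ holds because $x_c$ is an interior spatial maximum, so $\Delta Q\le0$ there. When $\kappa_c>0$, factoring out $\nu\kappa_c/r_c^2$ is pure algebra and gives \eqref{eq:carrier-sigma} with the stated $\Sigma_c$.

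The algebra is routine, so the main point needing care is the justification at the maximizer. One must check that the supremum over $0<r<r_0/e$ is attained at an interior point where $Q$ is smooth. It suffices that $\Gamma\ne0$ there, which holds because $M_p>0$ on a nonzero sign branch. The derivative-realizing choice must also be made measurably in time. Since the statement explicitly restricts to interior derivative-realizing maximizers, I would simply invoke the same a.e.\ differentiability argument already used for $M(t)$ in \cref{thm:cumulative-compression-intro}, and not treat boundary maxima at all.
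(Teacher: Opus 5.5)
Your proposal is correct and matches the paper's proof: substitute $\Gamma=QL^{-p}$ on a fixed sign branch into the circulation equation, collect the $Q_r$ terms into $B_p^r$ and the zeroth-order terms (using $u^r=-\nu\mathrm{Pe}/r$) into $V_p$, then apply Danskin at an interior derivative-realizing maximizer where $\nabla Q=0$ and $\Delta Q\le 0$ to get the ledger, with the $\Sigma_c$ form purely algebraic. The only difference is cosmetic: you compute $\phi_{rr}/\phi$ separately, whereas the paper states the combined identity for $(\partial_{rr}-r^{-1}\partial_r+\partial_{zz})(QL^{-p})$, and your algebra checks out.
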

\begin{proof}
Write $\Gamma=QL^{-p}$ on the chosen sign branch.  Since
\[
 \partial_rL=-\frac1r,
 \qquad
 \partial_rL^{-p}=\frac{p}{rL}L^{-p},
\]
direct differentiation gives
\[
 \left(\partial_{rr}-\frac1r\partial_r+\partial_{zz}\right)(QL^{-p})
 =L^{-p}\left[
 \Delta Q+\frac1r\left(\frac{2p}{L}-1\right)Q_r
 +\frac{p(p+1-2L)}{r^2L^2}Q
 \right].
\]
Substitute this identity into
\[
 \Gamma_t+b\cdot\nabla\Gamma
 =\nu\left(\Gamma_{rr}-r^{-1}\Gamma_r+\Gamma_{zz}\right)
\]
and divide by $L^{-p}$.  The $Q_r$ terms combine into $B_p^rQ_r$, while
$u^r=-\nu\mathrm{Pe}/r$ gives \eqref{eq:carrier-Q}.

At an interior derivative-realizing maximum,
\[
 \nabla Q(x_c,t)=0,
 \qquad
 \Delta Q(x_c,t)\le0.
\]
Danskin's theorem therefore yields
\[
 D_t^+M_p=Q_t(x_c,t)
 =\nu\Delta Q(x_c,t)-V_p(x_c,t)Q(x_c,t).
\]
Divide by $Q(x_c,t)=M_p(t)$ and use the definition of $\kappa_c$ to obtain
\eqref{eq:carrier-ledger}.  Equation \eqref{eq:carrier-sigma} is algebraic.
\end{proof}

\begin{corollary}\label{cor:carrier-dichotomy}
Assume all sufficiently late maximizers relevant to unbounded growth of $M_p$
are interior.  If for some $\kappa_0,\varepsilon>0$,
\[
 \kappa_c\ge\kappa_0,
 \qquad
 \frac{\mathrm{Pe}_c}{L_c}\le\frac{\kappa_0}{p}-\varepsilon
\]
on every sufficiently late growth branch, then $M_p$ is bounded and the
solution is regular by Wei's logarithmic circulation criterion
\cite{Wei2016}.  Consequently, if regularity fails, then for every $p>3/2$ and every
$\kappa_0>0$, either growth carriers with $\kappa_c<\kappa_0$ occur
arbitrarily late, or
\begin{equation}\label{eq:carrier-dichotomy}
 \boxed{
 \limsup_{\substack{t\uparrow T:\ D_t^+\log M_p\ge0\\ \kappa_c\ge\kappa_0}}
 \frac{\mathrm{Pe}_c}{L_c}\ge\frac{\kappa_0}{p}.}
\end{equation}
\end{corollary}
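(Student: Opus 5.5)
The plan is to bound $M_p$ directly from the ledger \eqref{eq:carrier-ledger} of \cref{prop:carrier-equation} and then close with Wei's criterion. At almost every late time at which $M_p$ is differentiable and $D_t^+\log M_p\ge0$, the hypothesis says the relevant maximizer $x_c$ is interior with $\kappa_c\ge\kappa_0$ and $\mathrm{Pe}_c/L_c\le\kappa_0/p-\varepsilon$. Substituting into \eqref{eq:carrier-ledger} gives
\[
 D_t^+\log M_p\le\frac{\nu}{r_c^2}\Bigl[\frac p{L_c}\Bigl(\frac{p+1}{L_c}-\frac2{L_c}\cdot L_c\cdot\frac1{L_c}\Bigr)\Bigr]
 +\frac{\nu}{r_c^2}\Bigl[p\frac{\mathrm{Pe}_c}{L_c}-\kappa_c\Bigr].
\]
The Péclet contribution is at most $\nu r_c^{-2}(\kappa_0-p\varepsilon-\kappa_c)\le-\nu p\varepsilon r_c^{-2}$. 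The remaining terms are $\nu r_c^{-2}\,p\,(-2+(p+1)/L_c)/L_c$, which are nonpositive once $L_c\ge(p+1)/2$. If necessary, shrink $r_0$ (equivalently restrict to $r<r_0e^{-(p+1)/2}$, a harmless change of the carrier window) so that $L\ge(p+1)/2$ throughout the region. Then $D_t^+\log M_p<0$ on every late growth branch, which contradicts $D_t^+\log M_p\ge0$ there. So there are no late growth branches, and $M_p$ is nonincreasing after some $t_0<T$ wherever it is differentiable.

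The next step turns this into a uniform bound. $M_p$ is locally Lipschitz on $[a,T)$, by Danskin and smoothness of the strong solution away from the axis. Boundary contributions at $r=r_0/e$ are controlled by the circulation maximum principle $|\Gamma|\le\Gamma_*$ times $L^p=1$. Near the axis, $\Gamma=O(r^2)$ kills $Q$. Hence $\sup_{t_0\le t<T}M_p(t)\le\max\{M_p(t_0),\Gamma_*\}$. The real content of the hypothesis "maximizers relevant to unbounded growth are interior" is used exactly here: any increase of $M_p$ must occur through interior carriers, since boundary values are a priori bounded. This yields $|\Gamma(r,z,t)|\le C\,(\log(r_0/r))^{-p}$ near the axis uniformly up to $T$. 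Because $p>3/2$, this is stronger than the modulus $|\log r|^{-3/2}$ in Wei's criterion \cite{Wei2016}, so the solution continues through $T$.

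The dichotomy is then the contrapositive. Suppose regularity fails, and suppose that for some $p,\kappa_0$ the late growth carriers eventually all have $\kappa_c\ge\kappa_0$ and the limsup in \eqref{eq:carrier-dichotomy} is $<\kappa_0/p$. Then some $\varepsilon>0$ satisfies the first part's hypothesis, which gives a contradiction.

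The main obstacle I expect is the measure-theoretic passage from the pointwise ledger to monotonicity. Growth branches are defined only at derivative-realizing interior maxima, and the first part needs a Lipschitz $M_p$ with $D^+\log M_p\le0$ a.e. I would handle this as in the proof of \cref{thm:cumulative-compression-intro}: apply Danskin at each differentiability time, use the interior-maximizer assumption to rule out boundary realization of growth, and integrate the a.e. inequality. A secondary point is the sign of the $(p+1)/L_c$ term, which requires the $r_0$ shrinking described above.
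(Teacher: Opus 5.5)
Your overall route is the paper's. You insert the hypotheses into \eqref{eq:carrier-ledger}, get $D_t^+\log M_p<0$ at every late growth time (a contradiction), conclude that the locally Lipschitz $M_p$ is terminally nonincreasing and bounded, and feed $|\Gamma|\le CL^{-p}$ with $p>3/2$ into Wei's criterion; the dichotomy is then the contrapositive.

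You depart from the paper only at the lower-order term $\frac{p}{L_c}\bigl(-2+\frac{p+1}{L_c}\bigr)$, which the paper absorbs as an $o(1)$ (tacitly using $L_c\to\infty$ along growth relevant to unbounded $M_p$). Your fix there does not work as written.
\begin{itemize}
\item Replacing $r_0$ by a smaller $r_0'$ only shifts $L$ by a constant. On the new window $r<r_0'/e$ you still only know $L'\ge1$, so this is not equivalent to restricting to $r<r_0e^{-(p+1)/2}$.
\item Restricting the window is not harmless. The corollary's hypotheses (interiority, $\kappa_c\ge\kappa_0$, the P\'eclet bound) concern maximizers of $Q$ over $0<r<r_0/e$. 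A maximizer over the smaller window need not maximize over the original one, and it may sit on the new edge $r=r_0e^{-(p+1)/2}$, where no hypothesis applies.
\item Your displayed inequality is garbled: its first bracket simplifies to $p(p-1)/L_c^2$, not the ledger term $\frac{p}{L_c}(-2+\frac{p+1}{L_c})$ that your prose correctly uses.
\end{itemize}

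The repair uses only what you already invoke. Keep the original window and set $K=\Gamma_*\bigl(\frac{p+1}{2}\bigr)^p$.
\begin{itemize}
\item Whenever $M_p(t)>K$, any maximizer satisfies $\Gamma_*L_c^p\ge|\Gamma_c|L_c^p=M_p(t)>K$. Hence $L_c>(p+1)/2$, and the lower-order term is $\le0$.
\item Since $K>\Gamma_*$ and $Q\le\Gamma_*$ at $r=r_0/e$, the maximizer cannot lie on that edge.
\item So at a.e.\ late $t$ with $M_p(t)>K$, a growth time would give $D_t^+\log M_p\le-\nu p\varepsilon/r_c^2<0$. Thus $M_p'<0$ a.e.\ on $\{t\ge t_0:\,M_p(t)>K\}$.
\item Lipschitz continuity then yields $M_p\le\max\{M_p(t_0),K\}$. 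The bound is not $\max\{M_p(t_0),\Gamma_*\}$, which is also inconsistent with your own shrunk window, whose edge value is $\Gamma_*((p+1)/2)^p$.
\end{itemize}
This is the precise content of the paper's $o(1)$.

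Note that $\Gamma_*<\infty$ is not among the corollary's hypotheses. The paper handles the outer edge through the interiority assumption instead. You should therefore state the circulation bound, or some other argument forcing growth-relevant maximizers toward the axis, as an explicit extra input rather than use it silently.
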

\begin{proof}
Under the two displayed assumptions,
\[
 \frac p{L_c}\left(\mathrm{Pe}_c-2+\frac{p+1}{L_c}\right)-\kappa_c
 \le -p\varepsilon+o(1)<0.
\]
At every sufficiently late time with $D_t^+\log M_p>0$, the same estimate
would give $D_t^+\log M_p<0$, a contradiction.  Hence
$D_t^+M_p\le0$ almost everywhere terminally.  Since $M_p$ is locally
Lipschitz on every preterminal compact interval, it is terminally
nonincreasing.  Therefore $M_p$ is bounded, and
\[
 |\Gamma(r,z,t)|\le C L(r)^{-p}.
\]
Since $p>3/2$, after decreasing the fixed axis radius if necessary,
$CL^{-p}\le L^{-3/2}$, and Wei's criterion applies.  The alternative
\eqref{eq:carrier-dichotomy} is the contrapositive.  Branch reselection does
not change the envelope argument: $M_p$ is continuous, and Danskin's formula
applies almost everywhere to an active maximizing branch.
\end{proof}

\begin{proposition}\label{prop:carrier-budget}
Suppose $x_c(t)$ is a $C^1$ interior maximizing branch.  Put
\[
 \mathrm{slip}_r=\dot r_c-B_p^r(x_c,t).
\]
Then
\begin{equation}\label{eq:carrier-budget}
 \boxed{
 \frac d{dt}\log|\Gamma_c|
 =-\frac\nu{r_c^2}
 \left[
 \kappa_c+\frac p{L_c}\left(1+\frac{p-1}{L_c}\right)
 \right]
 +\frac{p\,\mathrm{slip}_r}{r_cL_c}.}
\end{equation}
\end{proposition}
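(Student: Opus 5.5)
The plan is to differentiate $\log|\Gamma_c|=\log Q(x_c(t),t)-p\log L(r_c(t))$ along the $C^1$ branch, using the carrier equation \eqref{eq:carrier-Q} of \cref{prop:carrier-equation} for the first piece and the explicit form of $L$ for the second. The sign branch of $\Gamma$ is locally fixed at a nonzero interior maximizer, so $|\Gamma|=QL^{-p}$ is smooth near $x_c$ and the logarithms make sense.

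\emph{First piece.} By the chain rule,
\[
 \frac{d}{dt}\log Q(x_c,t)=\frac{Q_t+\dot r_cQ_r+\dot z_cQ_z}{Q}(x_c,t).
\]
At an interior maximizer $\nabla Q(x_c,t)=0$, so only $Q_t/Q$ survives. This is why no slip in $z$ and no $B_p$ transport appear in the $Q$-part. Substituting \eqref{eq:carrier-Q} at $x_c$, where again $B_p\cdot\nabla Q=0$, gives $Q_t=\nu\Delta Q-V_pQ$. With the definition of $\kappa_c$ this yields
\[
 \frac{d}{dt}\log Q_c=-\frac{\nu\kappa_c}{r_c^2}-V_p(x_c,t).
\]

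\emph{Second piece.} Since $\partial_rL=-1/r$,
\[
 -p\frac{d}{dt}\log L(r_c)=\frac{p\,\dot r_c}{r_cL_c}.
\]
Write $\dot r_c=\mathrm{slip}_r+B_p^r(x_c,t)$ and insert $B_p^r=u^r+\frac{\nu}{r}(1-\frac{2p}{L})$ together with $u^r=-\nu\mathrm{Pe}/r$. This gives
\[
 \frac{p\,\dot r_c}{r_cL_c}=\frac{p\,\mathrm{slip}_r}{r_cL_c}+\frac{\nu p}{r_c^2L_c}\Bigl(1-\mathrm{Pe}_c-\frac{2p}{L_c}\Bigr).
\]

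\emph{Combining.} Add the two pieces and use the explicit $V_p$. The Péclet terms cancel exactly:
\[
 \Bigl(1-\mathrm{Pe}_c-\tfrac{2p}{L_c}\Bigr)-\Bigl(2-\mathrm{Pe}_c-\tfrac{p+1}{L_c}\Bigr)=-1-\frac{p-1}{L_c}.
\]
This produces the bracket $\kappa_c+\frac{p}{L_c}\bigl(1+\frac{p-1}{L_c}\bigr)$ with the stated minus sign, plus the slip term, which is \eqref{eq:carrier-budget}.

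The only delicate point is justifying that $\nabla Q(x_c,t)=0$ holds along the whole branch and that the chain rule applies. Both follow from interiority, the $C^1$ assumption on $x_c$, and smoothness of $Q$ on $0<r<r_0/e$. The rest is bookkeeping of the $\nu p/(r^2L)$ terms, and the cancellation of $\mathrm{Pe}_c$ is the check that the algebra is right.
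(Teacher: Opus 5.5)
Your proposal is correct and follows essentially the same route as the paper: you split $\log|\Gamma_c|=\log Q(x_c,t)-p\log L_c$, and for the first term you rederive the ledger \eqref{eq:carrier-ledger} directly via the chain rule and $\nabla Q(x_c,t)=0$, which is legitimate for a $C^1$ branch and avoids citing Danskin. You then substitute $\dot r_c=\mathrm{slip}_r+B_p^r$ with $u^r=-\nu\mathrm{Pe}/r$, and the $\mathrm{Pe}_c$ terms cancel, leaving the bracket $\kappa_c+\frac{p}{L_c}\bigl(1+\frac{p-1}{L_c}\bigr)$ with the stated sign.
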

\begin{proof}
Along the branch,
\[
 |\Gamma_c|=M_pL_c^{-p},
 \qquad
 \dot L_c=-\frac{\dot r_c}{r_c}.
\]
Hence
\[
 \frac d{dt}\log|\Gamma_c|
 =\frac d{dt}\log M_p+\frac{p\dot r_c}{r_cL_c}.
\]
Insert \eqref{eq:carrier-ledger}, use
$\mathrm{Pe}_c=-r_cu_c^r/\nu$, and write
\[
 \dot r_c-u_c^r
 =\mathrm{slip}_r+\frac\nu{r_c}\left(1-\frac{2p}{L_c}\right).
\]
The $\mathrm{Pe}_c$ term cancels, leaving \eqref{eq:carrier-budget}.
\end{proof}

\subsection*{A peak identity}
\begin{proposition}\label{prop:peak-slip}
Let $x_p(t)=(r_p(t),z_p(t))$ be a $C^1$ path of strict positive maxima of
$F(\cdot,t)$, with invertible Hessian.  Then
\begin{equation}\label{eq:peak-slip}
 \nabla^2F\,(\dot x_p-b)=2F\nabla U-\nu\nabla\Delta_5F.
\end{equation}
If
\[
 K_F=-\frac{\nabla^2F}{F_p}>0,
\]
then
\begin{equation}\label{eq:peak-slip-solved}
 \dot x_p-b
 =-2K_F^{-1}\nabla U
 +\frac\nu{F_p}K_F^{-1}\nabla\Delta_5F.
\end{equation}
\end{proposition}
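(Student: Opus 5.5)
The plan is to differentiate the peak condition in time. Because $x_p(t)$ is a $C^1$ path of strict positive maxima of $F(\cdot,t)$ with invertible Hessian, we have $\nabla F(x_p(t),t)=0$ for all $t$ on the branch. Here $\nabla=(\partial_r,\partial_z)$ is the meridional gradient. Differentiating this identity along the path gives
\[
 \partial_t\nabla F(x_p,t)+\nabla^2F(x_p,t)\,\dot x_p=0 .
\]
By the implicit function theorem this is also what makes the path $C^1$ in the first place, so no extra regularity beyond the smoothness of $F$ at preterminal times is needed.

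Next I compute $\partial_t\nabla F$ by applying $\nabla$ to \eqref{eq:F}. Writing $\partial_tF=-b\cdot\nabla F+\nu\Delta_5F-2UF$ and taking the gradient, I get
\[
 \partial_t\nabla F=-(\nabla b)^{\!\top}\nabla F-\nabla^2F\,b+\nu\nabla\Delta_5F-2F\nabla U-2U\nabla F .
\]
At $x_p$ every term containing $\nabla F$ vanishes, so $\partial_t\nabla F=-\nabla^2F\,b+\nu\nabla\Delta_5F-2F\nabla U$ there. Substituting into the differentiated peak condition gives
\[
 \nabla^2F\,(\dot x_p-b)=2F\nabla U-\nu\nabla\Delta_5F,
\]
which is \eqref{eq:peak-slip}. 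The only care needed is to keep $\nabla\Delta_5F$ as an unexpanded third-order quantity. Commuting $\nabla$ with $\Delta_5$ would produce the extra $\partial_r(3/r)$ terms, and the statement avoids them by leaving $\nabla(\Delta_5F)$ as is.

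For \eqref{eq:peak-slip-solved}, at a strict positive maximum we have $F_p=F(x_p,t)>0$. The assumption $K_F=-\nabla^2F/F_p>0$ means $-\nabla^2F$ is positive definite, hence invertible, and $\nabla^2F=-F_pK_F$. Dividing \eqref{eq:peak-slip} by $-F_p$ and applying $K_F^{-1}$ gives
\[
 \dot x_p-b=-2K_F^{-1}\nabla U+\frac{\nu}{F_p}K_F^{-1}\nabla\Delta_5F .
\]

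The main point requiring attention is the axis. If $r_p(t)=0$, the meridional gradient and Hessian must be read for the smooth even extension in $r$, and the $3r^{-1}\partial_r$ term in $\Delta_5$ stays regular. So I would either restrict to interior peaks $r_p>0$, as the setting of strict maxima with invertible Hessian implicitly suggests, or note that smoothness and axis compatibility make every term continuous up to $r=0$.
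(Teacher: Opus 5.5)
Your proposal is correct and follows the paper's proof essentially verbatim. You differentiate $\nabla F(x_p(t),t)=0$ along the path, substitute the gradient of the $F$-equation, drop the terms carrying $\nabla F$ at the peak (the paper records this as $\nabla(b\cdot\nabla F)=\nabla^2F\,b$ and $\nabla(UF)=F\nabla U$ there), and invert using $\nabla^2F=-F_pK_F$.
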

\begin{proof}
Differentiate $\nabla F(x_p(t),t)=0$:
\[
 0=\nabla F_t+\nabla^2F\,\dot x_p.
\]
Using \eqref{eq:F},
\[
 F_t=-b\cdot\nabla F-2UF+\nu\Delta_5F.
\]
At the maximum,
\[
 \nabla(b\cdot\nabla F)=\nabla^2F\,b,
 \qquad
 \nabla(UF)=F\nabla U.
\]
Substitution gives \eqref{eq:peak-slip}; inversion gives
\eqref{eq:peak-slip-solved}.
\end{proof}

\begin{corollary}\label{cor:locked}
If
\[
 \dot x_p-b=o(1),
 \qquad
 \frac\nu{F_p}K_F^{-1}\nabla\Delta_5F=o(1),
 \qquad
 \|K_F\|+\|K_F^{-1}\|\le C,
\]
then
\[
 \nabla U(x_p(t),t)=o(1),
 \qquad
 \partial_zu^r(x_p(t),t)=o(r_p(t)).
\]
\end{corollary}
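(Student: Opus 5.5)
The plan is to feed the three hypotheses into the peak identity \eqref{eq:peak-slip-solved} of \cref{prop:peak-slip} and solve it for $\nabla U$. Multiplying \eqref{eq:peak-slip-solved} on the left by $K_F$ and rearranging gives, along the peak path,
\[
 2\nabla U(x_p(t),t)
 =-K_F(\dot x_p-b)+\frac{\nu}{F_p}\nabla\Delta_5F .
\]
The first term is bounded by $\|K_F\|\,|\dot x_p-b|\le C\,o(1)$. For the second, I will write
\[
 \frac{\nu}{F_p}\nabla\Delta_5F=K_F\Bigl(\frac{\nu}{F_p}K_F^{-1}\nabla\Delta_5F\Bigr),
\]
so that it is also bounded by $\|K_F\|\,o(1)$. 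Here the uniform bound on $\|K_F\|$ is what turns the two small inputs into a small output, and invertibility of the Hessian (hence of $K_F$) is part of the standing assumptions of \cref{prop:peak-slip}. Together these give $\nabla U(x_p(t),t)=o(1)$.

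For the second conclusion, use $U=u^r/r$, so $u^r=rU$. Since $\partial_z$ does not act on $r$,
\[
 \partial_zu^r=r\,\partial_zU .
\]
Evaluating at $x_p(t)=(r_p,z_p)$ gives $|\partial_zu^r(x_p,t)|=r_p|\partial_zU(x_p,t)|\le r_p|\nabla U(x_p,t)|=o(r_p(t))$, which is the claim.

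I expect no real obstacle. The only point needing care is that the two $o(1)$ hypotheses are stated for the $K_F^{-1}$-weighted quantities, so they must be transferred back through $K_F$; this is exactly why $\|K_F\|\le C$ is assumed. The bound on $\|K_F^{-1}\|$ is not needed for this direction. No statement about the radial derivative $\partial_ru^r$ is claimed, and none should be inferred: $\partial_ru^r=U+r\partial_rU$, and nothing here makes $U$ itself small.
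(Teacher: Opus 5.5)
Your proposal is correct and takes the same route as the paper: you rearrange \eqref{eq:peak-slip-solved}, transfer the two $o(1)$ hypotheses through the bound $\|K_F\|\le C$ to get $\nabla U(x_p(t),t)=o(1)$, and then use $\partial_zu^r=rU_z$ for the second claim. Your remark that the bound on $\|K_F^{-1}\|$ is never used in this direction is also accurate.
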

\begin{proof}
Rearrange \eqref{eq:peak-slip-solved} and use the three hypotheses.  Since
$u^r=rU$, $\partial_zu^r=rU_z$.
\end{proof}

\subsection*{Localized signed circulation response}
Consider
\begin{equation}\label{eq:normalized-circ}
 \gamma_s+\widetilde b\cdot\nabla\gamma
 =\bar\nu\left(\gamma_{\rho\rho}-\rho^{-1}\gamma_\rho+\gamma_{\zeta\zeta}\right)
\end{equation}
on $\delta<\rho<K$, with
\[
 |\gamma|\le1,
 \qquad
 |\gamma|\le2\rho^2,
 \qquad
 |\widetilde b|\le\bar b.
\]

\begin{proposition}\label{prop:signed-boundary-response}
Assume
\[
 \eta\gamma(\rho_*,0,0)\ge q,
 \qquad
 0<q\le1,
 \quad
 \eta\in\{-1,1\},
 \quad
 \rho_*\le\sqrt2.
\]
Choose
\[
 \delta^2=\frac q{16},
 \qquad
 \lambda=\max\left(\frac2\delta,\frac{4\bar b}{\bar\nu}\right),
\]
\[
 \kappa=\frac{\bar\nu\lambda^2}{8}e^{-\lambda(K-\delta)},
 \qquad
 S=\kappa^{-1}\log\frac{16e^{\lambda K}}q,
\]
\[
 L=\bar bS+\sqrt{4\bar\nu S\log(32/q)}.
\]
Let $\omega$ be the outer-boundary hitting measure of the backward diffusion
associated with \eqref{eq:normalized-circ}, stopped at $\rho=\delta$,
$\rho=K$, or time $-S$.  Then
\begin{equation}\label{eq:signed-response}
 \int_{\rho=K,\ |\zeta|\le L}\eta\gamma\,d\omega\ge\frac{5q}{8},
\end{equation}
and
\begin{equation}\label{eq:signed-response-weight}
 \omega\{\rho=K,\ |\zeta|\le L,\ \eta\gamma\ge q/2\}
 \ge\frac{q}{8-4q}\ge\frac q8.
\end{equation}
\end{proposition}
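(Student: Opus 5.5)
The plan is a stochastic representation of $\gamma$ by its backward diffusion. We write $\gamma(\rho_*,0,0)$ as the expectation of $\gamma$ at the exit point. Then we split the exit event into four pieces: exit at the inner boundary $\rho=\delta$, exit at time $-S$ without touching either boundary, exit at $\rho=K$ with $|\zeta|>L$, and the good event (exit at $\rho=K$ with $|\zeta|\le L$). The first three pieces should each be small compared with $q$. The required mass $5q/8$ then sits on the good event, and \eqref{eq:signed-response-weight} follows by a Chebyshev/Paley--Zygmund split using $|\gamma|\le1$.

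Concretely, the backward process $(\rho_\sigma,\zeta_\sigma)$ has generator $\bar\nu(\partial_\rho^2-\rho^{-1}\partial_\rho+\partial_\zeta^2)+\widetilde b\cdot\nabla$, killed at $\rho\in\{\delta,K\}$ or after time $S$. Optional stopping gives
\[
\eta\gamma(\rho_*,0,0)=\mathbb E[\eta\gamma(X_\tau)].
\]

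\emph{Inner exit.} On $\rho=\delta$ we have $|\gamma|\le2\delta^2=q/8$. So this piece contributes at most $q/8$ in absolute value, with no probability estimate needed.

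\emph{Survival to time $-S$.} Here $|\gamma|\le1$, so it suffices to bound the survival probability by $q/16$. We use the barrier $\phi(\rho)=e^{\lambda K}-e^{\lambda\rho}$. The drift of the radial part is $\ge -\bar\nu/\rho-\bar b\ge-\bar\nu\lambda/2-\bar\nu\lambda/4$ because $\rho\ge\delta\ge2/\lambda$ and $\lambda\ge4\bar b/\bar\nu$. Hence the generator applied to $-e^{\lambda\rho}$ is at most $-\bar\nu\lambda^2 e^{\lambda\rho}/4$. This gives $L\phi\le-2\kappa\phi$ after comparing with $\phi\le e^{\lambda K}$ and $e^{\lambda\rho}\ge e^{\lambda\delta}$; the factor $e^{-\lambda(K-\delta)}$ in $\kappa$ absorbs exactly this ratio. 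Then $e^{\kappa\sigma}\phi(\rho_\sigma)$ is a supermartingale. Since $\phi$ is bounded below on $[\delta,K)$ by a quantity comparable to the same exponential, we get
\[
\mathbb P(\text{survive to }S)\le e^{\lambda K}e^{-\kappa S}/(\text{lower bound}),
\]
and the choice of $S$ makes this $\le q/16$. This is where I expect the bookkeeping of constants to be the main obstacle: the lower bound of $\phi$ must be handled so that the logarithm in $S$ comes out exactly as stated. If needed, we use $\phi-\phi(K^-)$ replaced by a slightly shifted barrier.

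\emph{Axial escape.} Up to time $S$ we have $\zeta_\sigma=\int\widetilde b^\zeta+\sqrt{2\bar\nu}W_\sigma$, so $|\zeta|>L$ forces
\[
\sup_{\sigma\le S}|\sqrt{2\bar\nu}W_\sigma|>\sqrt{4\bar\nu S\log(32/q)}.
\]
By the reflection principle this has probability at most $4e^{-\log(32/q)}=q/8$. It contributes at most $q/8$.

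\emph{Good set.} Combining the pieces,
\[
\int_{\rho=K,|\zeta|\le L}\eta\gamma\,d\omega\ge q-\tfrac q8-\tfrac q{16}-\tfrac q8\ge\tfrac{5q}{8}
\]
(with margin; the split $q/8,q/16,q/8$ may be tuned). For the weight bound, let $p_g=\omega\{\eta\gamma\ge q/2\}$ on the good boundary set. Then
\[
5q/8\le p_g\cdot1+(\omega_{\text{good}}-p_g)\,q/2\le p_g(1-q/2)+q/2,
\]
so $p_g\ge (q/8)/(1-q/2)=q/(8-4q)\ge q/8$.
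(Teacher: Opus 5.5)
Your proposal follows the paper's route. It uses optional stopping for the backward diffusion, charges the inner exit, the time-out and the axial escape each a fixed fraction of $q$, bounds the axial escape by the reflection principle with exactly the stated $L$, and closes with the same two-level split giving $q/(8-4q)$. There is one genuine gap, in the time-out bound. Your barrier $\phi(\rho)=e^{\lambda K}-e^{\lambda\rho}$ vanishes at $\rho=K$, so it is \emph{not} bounded below on $[\delta,K)$. On the survival event $\{\tau>S\}$ the process may sit arbitrarily close to the outer wall. The supermartingale inequality $\mathbb E[e^{\kappa S}\phi(R_S);\tau>S]\le\phi(\rho_*)$ then gives no bound on $\mathbb P(\tau>S)$. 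The shift you list as optional is mandatory, and it has to be of order $e^{\lambda K}$.

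The paper takes $\Psi(\rho)=2e^{\lambda K}-e^{\lambda\rho}$. Your own computation, $\mathcal A_a(-e^{\lambda\rho})\le-\frac{\bar\nu\lambda^2}{4}e^{\lambda\rho}$, together with $2\kappa e^{\lambda K}=\frac{\bar\nu\lambda^2}{4}e^{\lambda\delta}\le\frac{\bar\nu\lambda^2}{4}e^{\lambda\rho}$, shows exactly that $\mathcal A_a\Psi\le-\kappa\Psi$. The factor $2$ in $\Psi$ is precisely the room that your margin $L\phi\le-2\kappa\phi$ leaves at the stated $\kappa$. Since $\Psi\ge e^{\lambda K}\ge1$ on $[\delta,K]$ and $\Psi(\rho_*)\le2e^{\lambda K}$, the supermartingale $e^{\kappa(a\wedge\tau)}\Psi(R_{a\wedge\tau})$ gives $\mathbb P(\tau>S)\le2e^{\lambda K}e^{-\kappa S}=q/8$ with the stated $S$. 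This is the paper's bound, using only $\Psi\ge1$. Using $\Psi\ge e^{\lambda K}$ instead gives $2e^{-\kappa S}=qe^{-\lambda K}/8<q/16$, because $\lambda K>2$. Either way the three losses total at most $3q/8$, and the rest of your argument goes through unchanged.

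A minor point: the backward generator carries $-\widetilde b\cdot\nabla$, as in the paper's $\mathcal A_a$, not $+\widetilde b\cdot\nabla$. This is harmless, since only $|\widetilde b|\le\bar b$ enters your estimates.
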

\begin{proof}
Let $a\ge0$ be backward elapsed time.  Before radial exit,
\[
 dR_a=\left[-\widetilde b^r(R_a,Z_a,-a)-\frac{\bar\nu}{R_a}\right]da
      +\sqrt{2\bar\nu}\,dW_a^r,
\]
\[
 dZ_a=-\widetilde b^z(R_a,Z_a,-a)\,da
      +\sqrt{2\bar\nu}\,dW_a^z.
\]
Let $\tau$ be the first exit from $\delta<\rho<K$ and
$\sigma=\min(\tau,S)$.  It\^o's formula and optional stopping give
\begin{equation}\label{eq:stop-representation}
 \eta\gamma(\rho_*,0,0)
 =\mathbb E\bigl[\eta\gamma(R_\sigma,Z_\sigma,-\sigma)\bigr].
\end{equation}

Set
\[
 \Psi(\rho)=2e^{\lambda K}-e^{\lambda\rho}.
\]
For the backward generator
\[
 \mathcal A_a=\bar\nu(\partial_{\rho\rho}-\rho^{-1}\partial_\rho+
 \partial_{\zeta\zeta})-\widetilde b(\cdot,-a)\cdot\nabla,
\]
the choices of $\lambda$ and $\kappa$ give
\[
 \mathcal A_a\Psi\le-\kappa\Psi.
\]
Therefore $e^{\kappa(a\wedge\tau)}\Psi(R_{a\wedge\tau})$ is a
supermartingale.  Since $\Psi\ge1$ and $\Psi(\rho_*)\le2e^{\lambda K}$,
\[
 \mathbb P(\tau>S)
 \le2e^{\lambda K}e^{-\kappa S}=\frac q8.
\]
On $\rho=\delta$,
\[
 |\gamma|\le2\delta^2=\frac q8.
\]
Moreover
\[
 |Z_a|\le\bar bS+\sqrt{2\bar\nu}\sup_{0\le s\le S}|W_s^z|.
\]
The reflection principle and the Gaussian tail estimate give
\[
 \mathbb P\left(\sup_{a\le\sigma}|Z_a|>L\right)
 \le4\exp\left[-\frac{(L-\bar bS)^2}{4\bar\nu S}\right]
 =\frac q8.
\]
Thus the old-time, inner-boundary, and far-axial contributions in
\eqref{eq:stop-representation} are each at most $q/8$.  The remaining signed
outer-boundary contribution is at least $5q/8$, proving
\eqref{eq:signed-response}.

Let $E=\{\eta\gamma\ge q/2\}$ and $p_E=\omega(E)$.  Since $|\gamma|\le1$,
\[
 \int\eta\gamma\,d\omega
 \le p_E+(1-p_E)\frac q2.
\]
Together with \eqref{eq:signed-response}, this gives
$p_E\ge q/(8-4q)$.
\end{proof}

\subsection*{Euler-length exit}
\begin{proposition}\label{prop:civ-exit}
Let $u$ be an axisymmetric suitable weak solution, smooth before $T_*$.  Assume
there is an Euler length $\ell(t)$ satisfying the hypotheses of
Constantin--Ignatova--Vicol \cite{CIVEulerLength2026} and, on the associated
shrinking neighborhood,
\[
 |u(x,t)|\le C\frac{\ell(t)}{T_*-t},
 \qquad
 |\nabla^2u(x,t)|\le\frac{C}{(T_*-t)\ell(t)}.
\]
Then the candidate point is regular.
\end{proposition}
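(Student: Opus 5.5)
The plan is to reduce the statement to the Euler-length regularity theorem of Constantin--Ignatova--Vicol \cite{CIVEulerLength2026} through a parabolic rescaling. We do not estimate anything new: the goal is to check that the two pointwise bounds are exactly the Euler-length scale hypotheses that their theorem converts into regularity. Concretely, fix the candidate point $x_0$ on the axis at time $T_*$. For $t<T_*$ introduce the local rescaling
\[
 u(x,t)=\frac{\ell(t)}{T_*-t}\,\widetilde u\Bigl(\frac{x-x_0}{\ell(t)},t\Bigr).
\]
In these variables the hypotheses say $|\widetilde u|\le C$ and $|\nabla^2\widetilde u|\le C$ on a unit neighborhood. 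The Euler length is the scale on which the advective time $\ell/|u|$ matches the remaining time $T_*-t$. Interpolating between the two bounds therefore gives $|\nabla u|\le C/(T_*-t)$ on the shrinking neighborhood, which is the first step.

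The second step is to use this gradient bound as the Type-I local Lipschitz bound required in the Constantin--Ignatova--Vicol exit. Note that the interpolation is local, so it needs a slightly larger neighborhood or a cutoff. I would use the standard Landau--Kolmogorov inequality on a ball of radius comparable to $\ell(t)$: $\|\nabla u\|_{L^\infty(B_{\ell})}\lesssim \ell^{-1}\|u\|_{L^\infty(B_{2\ell})}+\ell\|\nabla^2u\|_{L^\infty(B_{2\ell})}$. Shrinking the neighborhood by a fixed factor is harmless because the hypotheses of \cite{CIVEulerLength2026} are scale-invariant in this sense. Their dichotomy then applies: either the Euler length satisfies $\ell(t)\gg\sqrt{\nu(T_*-t)}$, so the flow is essentially inviscid on that scale and their Euler-length theorem excludes blow-up, or $\ell(t)\lesssim\sqrt{\nu(T_*-t)}$. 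In the second case the velocity bound gives $|u|\lesssim\sqrt{\nu/(T_*-t)}$ near $x_0$. That is a local Type-I bound, and axisymmetric Type-I singularities are ruled out by the Liouville theorems \cite{KNSS2009,SereginSverak2009}, or alternatively by the $\varepsilon$-regularity for suitable weak solutions after rescaling.

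The main obstacle is matching the exact form of the hypotheses in \cite{CIVEulerLength2026}, which I have not re-derived here. Their theorem may require the bounds on a parabolic neighborhood rather than at each time slice. It may also require a condition on $\dot\ell$. If it demands a vorticity or gradient bound instead of the Hessian bound, the interpolation step above supplies it. If it demands control of $\dot\ell$, I would try to extract this from the definition of the Euler length given there. Finally, I would use the axisymmetric structure only to justify that the candidate singular point lies on the axis. Off-axis points are already regular, since the equations there are locally two-dimensional with bounded swirl.
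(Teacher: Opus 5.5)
Your flawed step is the case split. In the branch $\ell(t)\lesssim\sqrt{\nu(T_*-t)}$ you know $|u|\lesssim\sqrt{\nu/(T_*-t)}$ only on a ball of radius comparable to $\ell(t)$. In self-similar variables $y=(x-x_0)/\sqrt{T_*-t}$ that is at most a bounded, possibly shrinking, ball. The Koch--Nadirashvili--Seregin--\v{S}ver\'ak Liouville theorem needs a global Type-I (or $C/r$) bound. The local Seregin--\v{S}ver\'ak result needs $|u|\le C/\sqrt{T_*-t}$ on a fixed parabolic cylinder $B(x_0,\rho)\times(T_*-\rho^2,T_*)$. Your bound does not control any blow-up limit on all of $\mathbb R^3$, so neither theorem applies. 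The split is also not exhaustive on a terminal interval, because $\ell(t)/\sqrt{\nu(T_*-t)}$ may oscillate between the two regimes. As written, that branch is unjustified.

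The fix is to delete the dichotomy, which is what the paper does. Its proof is a bare citation: the proposition assumes all of Constantin--Ignatova--Vicol's Euler-length hypotheses plus the two pointwise bounds on their shrinking neighborhood. The conclusion is then their local axisymmetric theorem, and nothing needs to be derived. Whatever their theorem requires is part of what is assumed here: a condition on $\dot\ell$, parabolic rather than time-slice bounds, or a relation between $\ell(t)$ and $\sqrt{\nu(T_*-t)}$. So the ``main obstacle'' you name does not arise.

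Your rescaling is consistent, and the interpolation giving $|\nabla u|\lesssim 1/(T_*-t)$ on a slightly smaller ball is correct. It is not needed, though, and on its own it is not a regularity criterion, since the Beale--Kato--Majda integral diverges logarithmically. Your off-axis remark is also unnecessary, since the statement concerns the given candidate point. If you want it, justify it by Caffarelli--Kohn--Nirenberg partial regularity plus rotational invariance (a singular set of zero one-dimensional parabolic measure cannot contain a circle), not by ``locally two-dimensional with bounded swirl.''
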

\begin{proof}
This is the local axisymmetric theorem of
Constantin--Ignatova--Vicol \cite{CIVEulerLength2026}; all of its Euler-length
hypotheses are retained.
\end{proof}

\begin{proposition}\label{prop:hodge-civ}
Let
\[
 Q_H=|\partial_ru^r|+|u^r/r|+|\partial_zu^z|+|\partial_zu^r|,
 \qquad
 J_H=|\psi_z|+|r\psi_{rz}|+|r\psi_{zz}|.
\]
Then
\[
 \boxed{\frac12J_H\le Q_H\le4J_H.}
\]
\end{proposition}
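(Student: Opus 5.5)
The plan is to write every entry of $Q_H$ in terms of the stream function $\psi$ through the formulas $u^r=-r\psi_z$ and $u^z=2\psi+r\psi_r$, and then compare termwise with $J_H$. Direct differentiation gives
\[
 \frac{u^r}{r}=-\psi_z,\qquad
 \partial_zu^r=-r\psi_{zz},\qquad
 \partial_ru^r=-\psi_z-r\psi_{rz},\qquad
 \partial_zu^z=2\psi_z+r\psi_{rz}.
\]
The last identity can be cross-checked against physical incompressibility $\partial_ru^r+u^r/r+\partial_zu^z=0$: the three terms sum to $-\psi_z-r\psi_{rz}-\psi_z+2\psi_z+r\psi_{rz}=0$.

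For the upper bound I apply the triangle inequality to each term:
\[
 Q_H\le(|\psi_z|+|r\psi_{rz}|)+|\psi_z|+(2|\psi_z|+|r\psi_{rz}|)+|r\psi_{zz}|
 =4|\psi_z|+2|r\psi_{rz}|+|r\psi_{zz}|\le4J_H.
\]

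For the lower bound I recover each entry of $J_H$ from the entries of $Q_H$:
\[
 |\psi_z|=|u^r/r|,\qquad
 |r\psi_{zz}|=|\partial_zu^r|,\qquad
 r\psi_{rz}=-\partial_ru^r-\frac{u^r}{r},
\]
so that $|r\psi_{rz}|\le|\partial_ru^r|+|u^r/r|$. Summing,
\[
 J_H\le|\partial_ru^r|+2|u^r/r|+|\partial_zu^r|.
\]
This has coefficient $2$ on $|u^r/r|$, so it does not by itself give $J_H\le2Q_H$. I therefore use the redundant term $|\partial_zu^z|$. Since $|u^r/r|=|\psi_z|$ appears only once in $Q_H$, the bound $J_H\le2Q_H$ needs $|u^r/r|\le Q_H$, which is trivial. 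So $J_H\le|\partial_ru^r|+2|u^r/r|+|\partial_zu^r|\le2Q_H$, because every coefficient is at most $2$. This gives $\tfrac12J_H\le Q_H$.

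The identities are pointwise, so there is no analytic obstacle. The only care needed is at the axis $r=0$: I interpret $u^r/r=-\psi_z$ by smooth axis compatibility, so both sides are continuous and the inequality extends to $r=0$ by continuity. The step most likely to go wrong is the sign bookkeeping in $\partial_zu^z$ and $\partial_ru^r$, so I will verify it against incompressibility as above.
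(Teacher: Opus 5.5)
Your proof is correct and follows essentially the same route as the paper. Both write the entries of $Q_H$ in terms of $\psi_z$, $r\psi_{rz}$, $r\psi_{zz}$, get $Q_H\le 4|\psi_z|+2|r\psi_{rz}|+|r\psi_{zz}|\le 4J_H$ from the triangle inequality, and use $|r\psi_{rz}|\le|\partial_ru^r|+|u^r/r|$ to obtain $J_H\le 2Q_H$. The remark that the coefficient $2$ on $|u^r/r|$ ``does not by itself'' suffice, so that you need $|\partial_zu^z|$, is mistaken: that coefficient already suffices, $|\partial_zu^z|$ is never used, and your final line says exactly this.
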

\begin{proof}
Set $a=\psi_z$, $b=r\psi_{rz}$ and $c=r\psi_{zz}$.  Then
\[
 \frac{u^r}{r}=-a,
 \qquad
 \partial_ru^r=-(a+b),
 \qquad
 \partial_zu^z=2a+b,
 \qquad
 \partial_zu^r=-c.
\]
Hence
\[
 Q_H\le(|a|+|b|)+|a|+(2|a|+|b|)+|c|\le4J_H.
\]
Also
\[
 |a|+|c|\le Q_H,
 \qquad
 |b|\le|a+b|+|a|\le Q_H,
\]
so $J_H\le2Q_H$.
\end{proof}

\begin{corollary}\label{cor:hodge-prefix}
Let $u_N$ be exact smooth axisymmetric unforced solutions on $[0,t_N]$,
$t_N\uparrow T$, with $u_N(0)\to u_0$ in $H^s$, $s>5/2$.  If for some fixed
ball $B_\rho$ and $\delta>0$,
\[
 (T-t_N)\|J_H[u_N](t_N)\|_{L^\infty(B_\rho)}\ge\delta,
\]
then the exact solution from $u_0$ cannot remain smooth through $T$.
\end{corollary}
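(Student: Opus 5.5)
We argue by contradiction: assume the exact solution $u$ from $u_0$ continues smoothly through $T$, so $u$ is smooth on $[0,T+\eta]$ for some $\eta>0$. We will contradict the lower bound on $J_H[u_N](t_N)$.

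\emph{Step 1: a bound for the limit solution.} Since $u$ is smooth on the compact interval $[0,T]$, every spatial derivative is bounded there. By \cref{prop:hodge-civ},
\[
 J_H\le 2Q_H\le 2\cdot 4\,\|\nabla u\|_\infty .
\]
Hence $\sup_{0\le t\le T}\|J_H[u](t)\|_{L^\infty}\le 8\sup_{[0,T]}\|\nabla u\|_\infty=:K<\infty$.

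\emph{Step 2: stability of the exact solutions.} We compare $u_N$ with $u$ on $[0,t_N]$. Because $u$ is a strong solution on $[0,T]$ with $\sup_{[0,T]}\|u\|_{H^s}<\infty$, the local well-posedness and continuous dependence theory in $H^s$ for $s>5/2$ applies. For $\|u_N(0)-u_0\|_{H^s}$ small enough, the solution $u_N$ exists on the whole of $[0,T]$, and
\[
 \sup_{0\le t\le T}\|u_N(t)-u(t)\|_{H^s}\to0 .
\]
This is the standard energy estimate for the difference $w=u_N-u$:
\[
 \frac{d}{dt}\|w\|_{H^s}^2\le C\bigl(\|u\|_{H^s}+\|u_N\|_{H^s}\bigr)\|w\|_{H^s}^2 ,
\]
combined with a continuity argument that keeps $\|u_N\|_{H^s}\le\sup\|u\|_{H^s}+1$. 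By uniqueness, these continued $u_N$ agree with the given exact solutions on $[0,t_N]$.

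\emph{Step 3: passing to $\nabla u_N$ and the contradiction.} Since $s>5/2$, Sobolev embedding gives $H^s\hookrightarrow C^1$. Therefore $\|\nabla u_N(t_N)\|_\infty\le\|\nabla u(t_N)\|_\infty+o(1)$, and \cref{prop:hodge-civ} gives, for large $N$,
\[
 \|J_H[u_N](t_N)\|_{L^\infty(B_\rho)}\le K+1 .
\]
Consequently $(T-t_N)\|J_H[u_N](t_N)\|_{L^\infty(B_\rho)}\le (K+1)(T-t_N)\to0$, which contradicts the assumed lower bound $\delta>0$.

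One point needs care: we must justify the Hodge expression. Near the axis the quantities $\psi_z$ and $r\psi_{rz}$ are controlled by $\nabla u$ through the identities in \cref{prop:hodge-civ}, which hold pointwise for smooth axisymmetric fields, including at $r=0$ by continuity. Apart from this, the main obstacle is Step 2, namely ensuring that the approximating solutions do not blow up before $t_N$. This is handled by the fact that convergence of the data in $H^s$ propagates on any interval where the limit solution is smooth. This is exactly why the argument needs the contradiction hypothesis of smoothness up to and including time $T$, not merely on $[0,T)$.
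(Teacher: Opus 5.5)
Your proof is correct and follows the paper's argument: assume smoothness through $T$, use $H^s$ continuous dependence to get $\sup_{[0,T]}\|u_N-u\|_{H^s}\to0$, use $H^s\hookrightarrow W^{1,\infty}$ together with the Hodge identities to bound $J_H[u_N](t_N)$ uniformly, and let $T-t_N\to0$. One minor correction: your displayed $H^s$ difference inequality is slightly off, because the $(w\cdot\nabla)u$ term loses a derivative and must be absorbed by the viscous term (giving a $\nu$-dependent coefficient). This does not affect the argument, which the paper simply cites as standard continuous dependence.
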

\begin{proof}
If the solution $u$ from $u_0$ were smooth on $[0,T]$, strong local
well-posedness and continuous dependence in $H^s$ would give
\[
 \sup_{0\le t\le T}\|u_N(t)-u(t)\|_{H^s}\to0.
\]
Since $s>5/2$, Sobolev embedding gives $H^s\hookrightarrow W^{1,\infty}$.
Therefore
\[
 \|J_H[u_N](t_N)-J_H[u](t_N)\|_{L^\infty(B_\rho)}\to0.
\]
Smoothness of $u$ makes $J_H[u]$ bounded near $T$, so the displayed
normalized quantity tends to zero, a contradiction.
\end{proof}

\subsection*{Circulation ledger}
\begin{proposition}\label{prop:absolute-circ-ledger}
For every nonnegative smooth cutoff $\phi$,
\begin{equation}\label{eq:absolute-circ-ledger}
 \frac{d}{dt}\int |\Gamma|\phi\,\frac{dr\,dz}{r}
 \le
 \int |\Gamma|\bigl[b\cdot\nabla\phi-2U\phi+\nu L_0\phi\bigr]
 \frac{dr\,dz}{r}
 -2\nu\int |F(0,z,t)|\phi(0,z)\,dz.
\end{equation}
For $\phi=\phi(t,r,z)$ add
\[
 \int|\Gamma|\phi_t\,\frac{dr\,dz}{r}
\]
to the right side.
\end{proposition}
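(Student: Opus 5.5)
The plan is to avoid working with $|\Gamma|$ directly near the axis, where $\Gamma=r^2F$ degenerates, and to rewrite the weighted stock in terms of $F=u^\theta/r$. Since $|\Gamma|\,dr\,dz/r=|F|\,r\,dr\,dz=|F|\,d\mu$, the left side of \eqref{eq:absolute-circ-ledger} is $\frac{d}{dt}\int|F|\phi\,d\mu$. By \eqref{eq:F}, $F$ solves the lifted five-dimensional equation
\[
(\partial_t+b\cdot\nabla-\nu\Delta_5)F=-2UF,
\]
and the axis is an interior set for this lift. Kato's inequality therefore applies with no boundary issue:
\[
(\partial_t+b\cdot\nabla-\nu\Delta_5)|F|\le-2U|F|
\]
in the distributional sense. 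To make this rigorous, I would use $\eta_\varepsilon(F)=\sqrt{F^2+\varepsilon^2}-\varepsilon$. Convexity gives $\eta_\varepsilon''|\nabla F|^2\ge0$, and $F\eta_\varepsilon'(F)\to|F|$. I would then let $\varepsilon\downarrow0$ at the end by dominated convergence, using the smoothness and decay of the strong solution on preterminal intervals.

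Next, I would test the inequality against $\phi\ge0$ in the physical measure $d\mu$ and treat the three terms separately.

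\emph{Time derivative.} This term gives $\frac{d}{dt}\int|F|\phi\,d\mu$. If $\phi$ depends on $t$, it also produces the extra term $\int|F|\phi_t\,d\mu=\int|\Gamma|\phi_t\,dr\,dz/r$.

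\emph{Transport.} Since $b$ is divergence free with respect to $d\mu$, I get $-\int b\cdot\nabla|F|\,\phi\,d\mu=\int|F|\,b\cdot\nabla\phi\,d\mu$. The boundary term $r u^r|F|\phi$ vanishes at $r=0$ and at infinity. Together with the reaction term $-2U|F|\phi$, this gives the first two entries of the bracket in \eqref{eq:absolute-circ-ledger} after converting back via $|F|\,d\mu=|\Gamma|\,dr\,dz/r$.

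\emph{Diffusion.} This is the only term producing the axis contribution. I would write $\Delta_5=\Delta_3+\frac2r\partial_r$. The $\Delta_3$ part is symmetric in $d\mu$, and its axis boundary terms $r(|F|_r\phi-|F|\phi_r)$ vanish at $r=0$. The extra drift part integrates by parts in $r$ with no weight:
\[
\int\frac2r|F|_r\phi\,r\,dr\,dz=-2\int|F(0,z)|\phi(0,z)\,dz-2\int|F|\,\phi_r\,dr\,dz .
\]
The first piece is exactly the axis debit $-2\nu\int|F(0,z,t)|\phi(0,z)\,dz$ in \eqref{eq:absolute-circ-ledger}. The second piece is $-\int|F|\frac2r\phi_r\,d\mu$.

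Combining the diffusion pieces, the interior viscous term is $\nu\int|F|\bigl(\Delta_3\phi-\frac2r\phi_r\bigr)\,d\mu$. I would identify $L_0\phi:=\phi_{rr}-r^{-1}\phi_r+\phi_{zz}$, which is the operator $\Delta_*$ of \cref{prop:peclet}, and check this against the paper's definition of $L_0$. As a cross-check, integrating the circulation equation \eqref{eq:Gamma} by parts against $\phi/r$ in $dr\,dz$ shows that $\Delta_*$ is formally self-adjoint for the weight $1/r$. That same calculation reproduces the boundary value $\Gamma_r/r\to2F(0,z)$.

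I expect the main obstacle to be the justification at the axis and in the limit $\varepsilon\downarrow0$. Specifically, I must check that the boundary term $r\,\eta_\varepsilon'(F)F_r\phi$ vanishes at $r=0$, which follows from smooth axis compatibility. I must also check that the unweighted boundary term converges to $-2|F(0,z)|\phi(0,z)$, which follows from uniform convergence $\eta_\varepsilon(F)\to|F|$. Working in the $F$ variable is chosen precisely because the alternative regularization of $|\Gamma|$ loses the axis term: $\eta_\varepsilon'(\Gamma)\Gamma_r/r\to\eta_\varepsilon'(0)\cdot2F(0,z)=0$ at the axis, so the debit would have to be recovered from an interior concentration instead.
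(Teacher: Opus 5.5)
Your argument is correct, but it organizes the computation differently from the paper.

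The paper stays with the circulation. It applies Kato's inequality to $|\Gamma|$ through $g_\delta=(\Gamma^2+\delta^2)^{1/2}-\delta$ and lets $\delta\downarrow0$, which gives $\partial_t|\Gamma|+b\cdot\nabla|\Gamma|\le\nu L_0|\Gamma|$ on $r>0$. It then integrates against $\phi\,dr\,dz/r$ over $r>\varepsilon$. The term $-2U\phi$ comes from $\operatorname{div}_{r,z}(b/r)=-2U/r$, and the axis debit is read off the Green boundary term at $r=\varepsilon$ using $\partial_r|\Gamma|/r\to2|F(0,z,t)|$.

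You instead use $|\Gamma|\,dr\,dz/r=|F|\,d\mu$. Then $b$ is divergence free for the measure you integrate against, and $-2U$ enters as the reaction term in \eqref{eq:F}. The debit becomes an honest $r=0$ boundary term from the unweighted integration by parts of the drift $\frac2r\partial_r$ in $\Delta_5=\Delta_3+\frac2r\partial_r$. The leftover $-\frac2r\phi_r$ combines with $\Delta_3\phi$ into $L_0\phi$.

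What your route buys is that the regularization commutes with the axis limit. The function $\eta_\varepsilon(F)$ is smooth through $r=0$, and $\eta_\varepsilon(F(0,z))\to|F(0,z)|$ passes to the limit directly.

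Your closing diagnosis is accurate and explains the paper's order of limits. At fixed $\delta$, $g_\delta(\Gamma)=O(r^4)$ kills the axis boundary term. The debit then hides in the Kato defect $\nu g_\delta''(\Gamma)|\nabla\Gamma|^2$, whose $\phi\,dr\,dz/r$ integral over a layer $r\lesssim\sqrt\delta$ tends to $2\nu\int|F(0,z)|\phi(0,z)\,dz$. That is why the paper sends $\delta\downarrow0$ on $r>0$ first and discards only the defect on interior nodal sets.

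In exchange, the paper's route works natively with the circulation equation \eqref{eq:Gamma} and its weight $1/r$, for which $L_0$ is the symmetric operator. No separate identification of $L_0$ is needed there.

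One wording caveat: the unregularized five-dimensional Kato inequality for $|F|$ cannot literally be tested against $\phi\,d\mu$. That test function is $\phi r^{-2}\,dm_5$, which is singular on the axis, and $\Delta_5(\phi r^{-2})$ carries exactly the axis measure that produces the debit. It is your $\eta_\varepsilon$ computation, not that heuristic line, that carries the proof.
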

\begin{proof}
Kato's inequality, applied to
$g_\delta=(\Gamma^2+\delta^2)^{1/2}-\delta$ and then
$\delta\downarrow0$, gives on $r>0$
\[
 \partial_t|\Gamma|+b\cdot\nabla|\Gamma|\le\nu L_0|\Gamma|.
\]
Integrate first over $r>\varepsilon$ with measure $dr\,dz/r$.
Axisymmetric incompressibility gives
\[
 \partial_ru^r+\partial_zu^z=-\frac{u^r}{r}=-U,
 \qquad
 \operatorname{div}_{r,z}(b/r)=-2U/r.
\]
Hence
\[
 -\int b\cdot\nabla|\Gamma|\,\phi\frac{dr\,dz}{r}
 =\int|\Gamma|(b\cdot\nabla\phi-2U\phi)\frac{dr\,dz}{r}.
\]
For diffusion, Green's identity for
$L_0=\partial_{rr}-r^{-1}\partial_r+\partial_{zz}$ yields
\[
 \int (L_0|\Gamma|)\phi\frac{dr\,dz}{r}
 =\int|\Gamma|L_0\phi\frac{dr\,dz}{r}
 -\int\frac{\partial_r|\Gamma|\,\phi-|\Gamma|\phi_r}{r}
 \Big|_{r=\varepsilon}dz.
\]
Since $\Gamma=r^2F$,
\[
 \frac{\partial_r|\Gamma|}{r}\to2|F(0,z,t)|,
 \qquad
 \frac{|\Gamma|\phi_r}{r}\to0.
\]
Let $\varepsilon\downarrow0$.  The Kato defect on interior nodal sets is
nonnegative and may be discarded.  If $\phi$ depends on time, differentiate
$|\Gamma|\phi$ and retain the $\phi_t$ term.
\end{proof}

Replacing $|\Gamma|$ by $\Gamma$ gives the signed identity; its axis term is
$-2\nu\int F(0,z,t)\phi(0,z)dz$ and has no fixed sign for arbitrary swirl.

\section{Source-memory estimates}\label{sec:memory}
\begin{proposition}\label{prop:causal-phase-aging-d18}
Fix a regular source cut $a$ and let $G_a$ solve
\[
 (\partial_t+b\cdot\nabla-\nu\Delta_5)G_a=j,
 \quad G_a(a)=0,\qquad j=\partial_z(F^2).
\]
Let $S(t,s)$ be its genuine transport--diffusion propagator,
$P=(-\Delta_5)^{-1}$, $q=(u^\theta)^2$, and
$V_*(t)=2\pi^2V_4(t)$. Set
\[
 \mathcal D j=(\partial_t+b\cdot\nabla-\nu\Delta_5)j,
 \qquad
 \mathfrak M_a(t)=\int_a^t(s-a)S(t,s)\mathcal D j(s)\,ds.
\]
On every compact regular time interval where the pairings are defined,
\begin{align}
 G_a(t)&=(t-a)j(t)-\mathfrak M_a(t),\label{eq:causal-phase-aging-d18}\\
 \mathcal W_a(t)&:=\frac{\langle G_a(t),[P,q(t)]j(t)\rangle_5}{V_*(t)}
 =-\frac{\langle\mathfrak M_a(t),[P,q(t)]j(t)\rangle_5}{V_*(t)}.
 \label{eq:phase-commutator-d18}
\end{align}
For $a<b<t$, with both source tags evolved in the \emph{same} actual
meridional drift,
\begin{equation}\label{eq:phase-recut-d18}
 \mathcal W_a(t)=\mathcal W_b(t)+
 \frac{\langle S(t,b)G_a(b),[P,q(t)]j(t)\rangle_5}{V_*(t)}.
\end{equation}
Moreover, with $U=u^r/r$ and $\Gamma=r^2F$, the exact source-production
identity is
\begin{align}
 \mathcal D j
 &=-4\partial_z(UF^2)-(\partial_zb)\cdot\nabla(F^2)
      -2\nu\partial_z|\nabla F|^2\notag\\
 &=\frac{2\Gamma}{r^4}
   \big[(rU_r-2U)\Gamma_z-rU_z\Gamma_r\big]
      -2\nu\partial_z|\nabla F|^2.
 \label{eq:phase-circ-d18}
\end{align}
For $r>0$, setting $\rho=r^2/2$, $A=U/r^2$ and $B=\Gamma^2$,
the inviscid part of \eqref{eq:phase-circ-d18} is precisely
$\{A,B\}_{\rho,z}=A_\rho B_z-A_zB_\rho$.
\end{proposition}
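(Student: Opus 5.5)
The identity \eqref{eq:causal-phase-aging-d18} will come from a Duhamel formula for $(t-a)j(t)$ under the genuine propagator. Set $L=\partial_t+b\cdot\nabla-\nu\Delta_5$ and $w(t)=(t-a)j(t)$. Then
\[
 Lw=j+(t-a)\mathcal Dj,\qquad w(a)=0.
\]
Duhamel with $S(t,s)$ gives
\[
 w(t)=\int_a^tS(t,s)j(s)\,ds+\mathfrak M_a(t).
\]
The first integral is exactly $G_a(t)$ by Duhamel for the source tag, so $G_a=(t-a)j-\mathfrak M_a$. This uses only linearity and uniqueness for the transport--diffusion equation on a compact regular interval, where all fields are smooth and decaying.

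For \eqref{eq:phase-commutator-d18}, pair the identity with $[P,q]j$ in $\langle\cdot,\cdot\rangle_5$. I then need
\[
 \langle j,[P,q]j\rangle_5=0.
\]
This holds because $P$ is self-adjoint for $dm_5$ and multiplication by $q$ is self-adjoint, so $[P,q]$ is antisymmetric and $\langle j,Pqj\rangle=\langle Pj,qj\rangle=\langle qj,Pj\rangle$. The only point to check is that the pairings are finite, so that $j\in L^2(dm_5)$, $Pj$ is defined and the quadratic form is real. On a regular interval this follows from decay of $F$ and the Hodge bounds used earlier.

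The recut formula \eqref{eq:phase-recut-d18} follows from the semigroup property. For $t>b$ we have $G_a(t)=S(t,b)G_a(b)+G_b(t)$, since both tags solve the same forced equation in the same drift. Pairing this with $[P,q(t)]j(t)$ and dividing by $V_*$ gives the claim.

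The main computational step is \eqref{eq:phase-circ-d18}.
\begin{itemize}
\item From \eqref{eq:F}, with $h=F^2$,
\[
 Lh=-4Uh-2\nu|\nabla F|^2 .
\]
Here I use that $\Delta_5$ acting on $F^2$ produces $2F\Delta_5F+2|\nabla F|^2$.
\item Apply $\partial_z$ and commute it through $L$. Since $\partial_z$ commutes with $\Delta_5$ (the coefficients depend only on $r$), the only commutator is $[\partial_z,b\cdot\nabla]h=(\partial_zb)\cdot\nabla h$. This gives the first line.
\item For the second line, substitute $F^2=\Gamma^2/r^4$ and use $\partial_zu^r=rU_z$. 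I also need the incompressibility relation $\partial_zu^z=-\partial_ru^r-U=-2U-rU_r$ together with the axial part $\partial_zu^z\,\partial_z h$. Collecting terms, the $U\Gamma\Gamma_z$ coefficients combine to $(rU_r-2U)$.
\end{itemize}
This bookkeeping is where I expect errors: the $\partial_z u^z$ term must be eliminated via incompressibility before the combination appears. I would verify it by writing everything in terms of $\Gamma_z,\Gamma_r$ and checking that the $\Gamma^2$ terms without derivatives cancel; they do, because $h$ depends on $r$ explicitly only through $r^{-4}$, which $\partial_z$ does not see.

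Finally, the Poisson bracket is a change of variables. With $\rho=r^2/2$ we have $\partial_\rho=r^{-1}\partial_r$. With $A=U/r^2$ this gives
\[
 A_\rho=\frac{rU_r-2U}{r^4},\qquad A_zB_\rho=\frac{U_z}{r^2}\cdot\frac{2\Gamma\Gamma_r}{r},\qquad A_\rho B_z=\frac{(rU_r-2U)\,2\Gamma\Gamma_z}{r^4}.
\]
The difference matches the inviscid part.
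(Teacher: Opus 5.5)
Your proof is correct and takes essentially the paper's route. Applying Duhamel to $(t-a)j$ is the paper's integration by parts in source age (via $\partial_s[S(t,s)j(s)]=S(t,s)\mathcal Dj(s)$) written forwards, and skew-adjointness of $[P,q]$, the splitting $G_a(t)=S(t,b)G_a(b)+G_b(t)$, the commutator $[\partial_z,b\cdot\nabla]h=(\partial_zb)\cdot\nabla h$ and the substitution $\partial_zu^z=-2U-rU_r$ are exactly its steps. One small correction to your bookkeeping remark: the underived terms cancel because $-4\partial_z(Uh)$ gives $-4U_zh$ while $-(\partial_zu^r)h_r=-rU_zh_r$ gives $+4U_zh$ through $r\partial_r(r^{-4})=-4r^{-4}$, not merely because $\partial_z$ does not see $r^{-4}$.
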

\begin{proof}
Since $\partial_s S(t,s)=-S(t,s)L_s$ for
$L_s=-b(s)\cdot\nabla+\nu\Delta_5$, differentiation yields
$\partial_s[S(t,s)j(s)]=S(t,s)\mathcal D j(s)$.
Integrating Duhamel's formula for $G_a$ and then integrating by parts
in source age gives \eqref{eq:causal-phase-aging-d18}. The commutator
$[P,q]$ is skew-adjoint, so $\langle j,[P,q]j\rangle_5=0$,
which proves \eqref{eq:phase-commutator-d18}.
The re-cut formula follows from
$G_a(t)=S(t,b)G_a(b)+G_b(t)$.
Finally $\mathcal D(F^2)=-4UF^2-2\nu|\nabla F|^2$,
while $[\mathcal D,\partial_z]f=-(\partial_zb)\cdot\nabla f$.
These give the first line of \eqref{eq:phase-circ-d18}; substituting
$F=\Gamma/r^2$ and
$\partial_z u^z=-2U-rU_r$ gives its second line.
The change of variables $\partial_\rho=r^{-1}\partial_r$
gives the displayed Jacobian.
\end{proof}

The phase term $\mathcal W_a$ is a component of the complete
quartic-exchange residual. Its backward receiver depends on the reporting
terminal time, and the re-cut identity \eqref{eq:phase-recut-d18}
retains the propagated earlier current.

Fix a core scale $R$, a source chamber with $r\asymp R$, and a source scale
$d\le R/4$.  Let
\[
 (\partial_t+B\cdot\nabla-\nu\Delta_5)N=\partial_z(\chi F^2),
 \qquad N(t_0)=0,
\]
with source support of diameter $O(d)$.  Assume
\begin{equation}\label{eq:kernel-gradient}
 \int|\partial_{y_z}K(t,x;s,y)|\,dy
 \le\frac{Ce^{C\mathcal A_{\rm def}}}{\sqrt{\nu(t-s)}},
 \qquad
 \mathcal A_{\rm def}=\int_{t_0}^t\|\nabla B\|_\infty,
\end{equation}
and
\begin{equation}\label{eq:chamber-hodge}
 R|b[g](x)|\le C_HdR^2\|g\|_\infty.
\end{equation}

\begin{proposition}\label{prop:fresh-filter}
If $t-t_0\le C_0d^2/\nu$, then
\[
 \boxed{
 R|b_N(x)|\le
 Ce^{C\mathcal A_{\rm def}}
 \frac{\Gamma_*^2}{\nu}\left(\frac dR\right)^2.}
\]
\end{proposition}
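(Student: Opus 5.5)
We bound $N$ pointwise through its kernel representation. We then pass to the radial velocity $b_N$ using the chamber Hodge bound \eqref{eq:chamber-hodge}, applied with $g=N$ and source scale $d$.

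\emph{Duhamel with a derivative on the kernel.} Write
\[
 N(t,x)=\int_{t_0}^t\int K(t,x;s,y)\,\partial_{y_z}\bigl(\chi F^2\bigr)(s,y)\,dy\,ds .
\]
Move the axial derivative onto the kernel. The boundary terms vanish because $\chi F^2$ has compact support of diameter $O(d)$ inside the chamber $r\asymp R$, away from the axis. This gives
\[
 |N(t,x)|\le\int_{t_0}^t\|\chi F^2(s)\|_\infty\int|\partial_{y_z}K|\,dy\,ds .
\]
Insert \eqref{eq:kernel-gradient}. On the chamber the circulation maximum principle gives
\[
 |F|=|\Gamma|/r^2\le C\Gamma_*/R^2,
 \qquad\text{so}\qquad
 \|\chi F^2\|_\infty\le C\Gamma_*^2R^{-4}.
\]
Hence
\[
 \|N(t)\|_\infty\le Ce^{C\mathcal A_{\rm def}}\frac{\Gamma_*^2}{R^4}\int_{t_0}^t\frac{ds}{\sqrt{\nu(t-s)}}
 =Ce^{C\mathcal A_{\rm def}}\frac{\Gamma_*^2}{R^4}\cdot\frac{2\sqrt{t-t_0}}{\sqrt\nu}.
\]
The freshness hypothesis $t-t_0\le C_0d^2/\nu$ turns the time factor into $Cd/\nu$. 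Therefore
\[
 \|N(t)\|_\infty\le Ce^{C\mathcal A_{\rm def}}\frac{\Gamma_*^2 d}{\nu R^4}.
\]

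\emph{Hodge step.} Apply \eqref{eq:chamber-hodge} with $g=N(t)$:
\[
 R|b_N(x)|\le C_H dR^2\|N\|_\infty\le Ce^{C\mathcal A_{\rm def}}\frac{\Gamma_*^2}{\nu}\,\frac{d^2}{R^2}.
\]
This is exactly the boxed estimate, with $C$ absorbing $C_H$, $C_0$ and the kernel constant.

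\emph{Main obstacle.} The delicate point is the use of \eqref{eq:chamber-hodge} for $g=N$. That bound is stated for fields with the chamber support geometry at scale $d$. However, $N$ spreads under transport and diffusion beyond the source support. On the fresh window the spreading is $O(\sqrt{\nu(t-t_0)})=O(d)$, and the drift displacement is controlled by the same deformation action $\mathcal A_{\rm def}$. So I would either interpret \eqref{eq:chamber-hodge} as applying to any $g$ supported in an $O(d)$-neighbourhood, or split $N$ into a near part and a far tail. The far tail has Gaussian decay from the kernel, and its contribution is absorbed into $e^{C\mathcal A_{\rm def}}$.

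A second check is the $F$ bound. It requires the chamber to sit at $r\asymp R$ so that $r^{-2}\sim R^{-2}$. The transfer of $\partial_z$ to the kernel is justified because the cutoff $\chi$ is smooth and compactly supported.
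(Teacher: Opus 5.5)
Your argument is correct and is the paper's proof. It moves the axial derivative onto the kernel via Duhamel, bounds $|F|\le C\Gamma_*/R^2$ on the chamber, uses $2\sqrt{(t-t_0)/\nu}\le Cd/\nu$ to get $\|N(t)\|_\infty\le Ce^{C\mathcal A_{\rm def}}\Gamma_*^2d/(\nu R^4)$, and then applies \eqref{eq:chamber-hodge}. The paper applies \eqref{eq:chamber-hodge} directly as a stated hypothesis with $g=N(t)$, so your ``main obstacle'' needs no resolution; your alternative of absorbing a far tail into $e^{C\mathcal A_{\rm def}}$ would not be justified as written, but it is unnecessary.
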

\begin{proof}
Duhamel's formula and integration by parts in $y_z$ give
\[
 N(t,x)
 =-\int_{t_0}^t\int\partial_{y_z}K(t,x;s,y)\,\chi F^2(y,s)\,dy\,ds.
\]
Using \eqref{eq:kernel-gradient} and $|F|\le C\Gamma_*/R^2$ on $r\asymp R$,
\[
 \|N(t)\|_\infty
 \le Ce^{C\mathcal A_{\rm def}}\frac{\Gamma_*^2}{R^4}
 \int_{t_0}^t\frac{ds}{\sqrt{\nu(t-s)}}.
\]
The last integral equals $2\sqrt{(t-t_0)/\nu}\le C d/\nu$.  Thus
\[
 \|N(t)\|_\infty
 \le Ce^{C\mathcal A_{\rm def}}
 \frac{\Gamma_*^2d}{\nu R^4}.
\]
Apply \eqref{eq:chamber-hodge}.
\end{proof}

\begin{proposition}\label{prop:memory}
Let cohorts $N_j$ be born at scales $R_j$ on $O(R_j^2/\nu)$ clocks.  Assume
uniform deformation, spatial tightness on $O(R_j)$, and no unresolved finer
source scale.  Then, for $R_k\le R_j$,
\[
 R_k|b_j|\le C\frac{\Gamma_*^2}{\nu}\frac{R_k}{R_j},
 \qquad
 R_k^2|U_j|\le C\frac{\Gamma_*^2}{\nu}
 \left(\frac{R_k}{R_j}\right)^2.
\]
If $R_{j+1}\le qR_j$, $0<q<1$, then
\[
 \sum_{j\le k}\frac{R_k}{R_j}\le\frac1{1-q}.
\]
\end{proposition}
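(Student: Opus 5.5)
The plan is to apply \cref{prop:fresh-filter} to each cohort separately, evaluating at the receiver scale $R_k$ instead of the birth scale $R_j$, and then to sum the resulting geometric series. Fix $j$ with $R_k\le R_j$. The cohort $N_j$ is born in a chamber $r\asymp R_j$ with source scale $d_j\lesssim R_j$; the hypothesis of no unresolved finer source scale means $d_j\asymp R_j$. It lives on a clock $t-t_0\le C_0R_j^2/\nu$, so the time hypothesis of \cref{prop:fresh-filter} holds with $d=d_j$. Uniform deformation bounds $\mathcal A_{\rm def}$ uniformly in $j$, so the kernel-gradient bound \eqref{eq:kernel-gradient} gives the same $L^\infty$ bound as in that proof:
\[
 \|N_j(t)\|_\infty\le C\frac{\Gamma_*^2 d_j}{\nu R_j^4}\le C\frac{\Gamma_*^2}{\nu R_j^3},
\]
with $|F|\le C\Gamma_*/R_j^2$ on the birth chamber coming from the circulation bound.

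Next I need the Hodge velocity of $N_j$ at a point with $r\lesssim R_k$, which may lie far inside the birth chamber. Spatial tightness on $O(R_j)$ means $N_j$ is essentially supported in a region of diameter $O(R_j)$ at distance $\gtrsim R_j$ from the axis, or at least of size $R_j$. For the receiver point I would use the near-axis regularity of the five-dimensional stream function: $\psi_j=P_5N_j$ is smooth in the lifted $\R^5$ variables near the axis, and $U_j=-\partial_z\psi_j$. The crude whole-chamber bound \eqref{eq:chamber-hodge} at scale $R_j$ gives $|U_j|\lesssim d_j\|N_j\|_\infty\lesssim \Gamma_*^2/(\nu R_j^2)$ on $r\lesssim R_j$. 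Multiplying by $R_k^2$ gives
\[
 R_k^2|U_j|\le C\frac{\Gamma_*^2}{\nu}\Bigl(\frac{R_k}{R_j}\Bigr)^2,
\]
and since $b_j^r=rU_j$ with $r\lesssim R_k$, also $R_k|b_j^r|\lesssim R_k^2|U_j|$. The stated bound on $R_k|b_j|$ is weaker by one factor of $R_k/R_j$. I would obtain it directly from $|b_j|\lesssim R_j\|N_j\|_\infty\cdot R_j$-type Biot--Savart scaling, namely $|b_j|\lesssim \Gamma_*^2/(\nu R_j)$, which also covers the axial component $u^z=2\psi+r\psi_r$. The axial component does not carry the extra factor $r$, and that is exactly why only the ratio $R_k/R_j$ appears.

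Finally, if $R_{j+1}\le qR_j$, then $R_k/R_j\le q^{k-j}$ for $j\le k$. Summing the geometric series gives $\sum_{j\le k}R_k/R_j\le\sum_{m\ge0}q^m=1/(1-q)$.

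The main obstacle is the second step: I have to justify that the velocity bound for $U_j$ holds uniformly on $r\lesssim R_k$ when the cohort sits at scale $R_j$. This needs the tightness assumption to prevent tails of $N_j$ from reaching down to the axis at small radii. I expect to handle it by splitting $N_j$ into a part supported in the chamber and a small tail, and bounding the tail's contribution by its $L^2$ mass via the interpolation $\|Kq\|_\infty\lesssim\|q\|_2^{2/3}\|q\|_\infty^{1/3}$ used in \cref{prop:bounded-fixed-cut-tags}.
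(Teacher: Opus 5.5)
Your proposal follows the paper's proof: the birth estimate from \cref{prop:fresh-filter} with $d_j\asymp R_j$ and uniformly bounded $\mathcal A_{\rm def}$ gives $\|N_j\|_\infty\le C\Gamma_*^2/(\nu R_j^3)$, the five-dimensional Newtonian kernel on an $O(R_j)$-tight support gives $|b_j|\le CR_j^2\|N_j\|_\infty$ and $|U_j|\le CR_j\|N_j\|_\infty$, and the geometric series finishes. One imprecision: \eqref{eq:chamber-hodge} controls $b$, not $U=b^r/r$ near the axis, so the $U_j$ bound should come directly from the gradient of the five-dimensional kernel acting in $U_j=-\partial_zP_5N_j$; and since tightness is a hypothesis, your tail-splitting fallback is not needed.
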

\begin{proof}
The birth estimate gives
\[
 \|N_j\|_\infty\le C\frac{\Gamma_*^2}{\nu R_j^3}.
\]
Spatial tightness and the five-dimensional Newtonian kernel give
\[
 |b_j|\le CR_j^2\|N_j\|_\infty,
 \qquad
 |U_j|\le CR_j\|N_j\|_\infty.
\]
Therefore
\[
 R_k|b_j|\le C\frac{\Gamma_*^2}{\nu}\frac{R_k}{R_j},
 \qquad
 R_k^2|U_j|\le C\frac{\Gamma_*^2}{\nu}
 \left(\frac{R_k}{R_j}\right)^2.
\]
Finally $R_k/R_j\le q^{k-j}$, and the geometric series gives the claim.
\end{proof}

Pan's theorem \cite{Pan2016} imposes a meridional-drift condition; bounded
circulation alone does not activate it.  Accordingly, the compact-memory step
below remains an explicit hypothesis.

\begin{remark}\label{rem:age-tail}
If
\[
 |S_{nj}(E)|\le C\tau_n\kappa_j2^{j-n},
\]
then
\[
 \sum_{j\le n-A}|S_{nj}(E)|
 \le C\tau_n\left(\sup_{j\le n-A}\kappa_j\right)2^{1-A}.
\]
No fixed-age conclusion follows without control of the prefactor.  The scalar
choice
\[
 \tau_n=n,
 \qquad
 \kappa_j=1,
 \qquad
 j_n=n-\lfloor\log_2n\rfloor
\]
satisfies
\[
 \tau_n2^{j_n-n}\in[1,2),
 \qquad
 n-j_n\to\infty.
\]

For a variable cutoff $A_n$, a sufficient condition for the discarded tail to be negligible at the terminal logarithmic scale is
\[
 \sum_{n<N}\tau_n\sum_{j\le n-A_n}\kappa_j2^{j-n}
 =o\!\left(\log\frac{T-a}{T-t_N}\right),
\]
together with control of the unfinished terminal record. Thus $A_n$ may diverge, or retain the entire post-$a$ history.
\end{remark}

\section{Radial dissipation and causal source bounds}\label{sec:radial-lorentz}
Let $v=u^\theta$ and use the reduced physical measure $d\mu=r\,dr\,dz$.
The results of this section hold on regular intervals of the exact
unforced axisymmetric flow. They do not assert the signed continuation
condition in \eqref{eq:signed-source-intro}.

\begin{proposition}\label{prop:radial-floor}
Suppose $v\not\equiv0$ and $M_0=\|rv(a)\|_\infty$ is finite and nonzero.
Put $E_\theta(t)=\int |v(t)|^2\,d\mu$ and
\[
 D_{4,\perp}(t)=\frac34\int|\partial_r(v(t)^2)|^2\,d\mu
      +\int\frac{|v(t)|^4}{r^2}\,d\mu .
\]
Then $D_4\ge D_{4,\perp}$, and at every later regular time,
\begin{equation}\label{eq:radial-floor}
 D_{4,\perp}(t)\ge C_{\rm rad}
 \frac{V_4(t)^2}{M_0^2E_\theta(t)},
 \qquad C_{\rm rad}=\left(1+\frac{\sqrt3}{2}\right)^2
 =\frac74+\sqrt3.
\end{equation}
Moreover, for every $R>0$,
\begin{equation}\label{eq:quartic-radial-tail}
 \int_{r\ge R}|v(t)|^4\,d\mu
 \le \frac{M_0^2}{R^2}E_\theta(t).
\end{equation}
\end{proposition}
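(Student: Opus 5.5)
The plan is to exploit two representations of $V_4$ that each trade one factor of $v^2$ for $rv$, so that the circulation bound enters linearly. I will then combine the two representations with an optimally chosen convex weight, which produces exactly the constant $C_{\rm rad}=(1+\sqrt3/2)^2$.

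\emph{Preliminaries.} Put $w=v^2\ge0$. Since $|\nabla(v^2)|^2=4v^2|\nabla v|^2$, we have
\[
3\int|v|^2|\nabla v|^2\,d\mu=\tfrac34\int|\nabla w|^2\,d\mu\ge\tfrac34\int w_r^2\,d\mu .
\]
This gives $D_4\ge D_{4,\perp}$ immediately. Next, $\Gamma=rv$ solves \eqref{eq:Gamma}, so the circulation maximum principle gives $|rv(t)|\le M_0$ at every later regular time. The tail bound \eqref{eq:quartic-radial-tail} is then trivial: on $r\ge R$ we have $v^2\le M_0^2/R^2$, hence $\int_{r\ge R}v^4\,d\mu\le (M_0^2/R^2)\int v^2\,d\mu\le M_0^2E_\theta/R^2$.

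\emph{Two representations.} Using $\partial_r(r^2)=2r$ and integrating by parts in $r$ gives
\[
V_4=\int w^2\,r\,dr\,dz=-\int r^2ww_r\,dr\,dz=-\int (rv)\,v\,w_r\,d\mu .
\]
The boundary term $r^2w^2$ vanishes at the axis and at infinity by smoothness and decay. Trivially we also have $V_4=\int (rv)\,v\,(w/r)\,d\mu$. For $\alpha\in[0,1]$, averaging the two and applying Cauchy--Schwarz with $|rv|\le M_0$ yields
\[
V_4\le M_0E_\theta^{1/2}\,\bigl\|{-\alpha w_r+(1-\alpha)w/r}\bigr\|_2 .
\]

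\emph{The cross term.} Expanding the square gives
\[
\alpha^2\|w_r\|_2^2+(1-\alpha)^2\|w/r\|_2^2-2\alpha(1-\alpha)\int \frac{ww_r}{r}\,d\mu .
\]
The last integral equals $\int ww_r\,dr\,dz=\tfrac12\int\partial_r(w^2)\,dr\,dz=-\tfrac12\int w(0,z)^2\,dz\le0$; in fact it is zero, since $v$ vanishes on the axis. The cross term therefore has a favorable sign, and
\[
V_4^2\le M_0^2E_\theta\bigl(\alpha^2\|w_r\|_2^2+(1-\alpha)^2\|w/r\|_2^2\bigr).
\]

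\emph{Choice of $\alpha$.} I choose $\alpha$ so that the bracket is a multiple of $D_{4,\perp}=\tfrac34\|w_r\|_2^2+\|w/r\|_2^2$. This requires $\alpha^2/(3/4)=(1-\alpha)^2$, i.e. $\alpha=\tfrac{\sqrt3}{2}(1-\alpha)$, so $1-\alpha=(1+\sqrt3/2)^{-1}$. The bracket is then $(1-\alpha)^2D_{4,\perp}=D_{4,\perp}/C_{\rm rad}$, which rearranges to \eqref{eq:radial-floor}.

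The only delicate point is justifying the integrations by parts. The axis terms vanish because $v=O(r)$ for smooth axis-compatible swirl, and the terms at infinity vanish by finite energy and smooth decay on a regular interval. The sign of the cross term is the key observation that makes the sharp constant available.
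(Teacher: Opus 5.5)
Your argument is correct and is essentially the paper's: Cauchy--Schwarz on $V_4=\int (rv)\,v\,[-\alpha w_r+(1-\alpha)w/r]\,d\mu$ with $\alpha/(1-\alpha)=\sqrt3/2$ is the paper's expansion of $\int|Aw_r-w/r+\lambda rw|^2\,d\mu\ge0$ ($A=\sqrt3/2$) optimized over $\lambda$, with the same integrations by parts and the same bound $\int r^2w^2\,d\mu\le M_0^2E_\theta$. One correction: the cross term is not favorably signed, since $-2\alpha(1-\alpha)\int ww_r\,r^{-1}\,d\mu=\alpha(1-\alpha)\int w(0,z)^2\,dz\ge0$ would enlarge the norm, so the step rests (as in the paper) on this term vanishing because $v(0,z)=0$.
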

\begin{proof}
Set $g=v^2$, $A=\sqrt3/2$, and
$m_2=\int r^2g^2\,d\mu$. For $\lambda>0$, expanding
$\int|Ag_r-g/r+\lambda rg|^2\,d\mu\ge0$ gives
\[
 D_{4,\perp}+\lambda^2m_2
 \ge2\lambda(A+1)V_4.
\]
Indeed $\int g_rg/r\,d\mu=0$ and
$\int rgg_r\,d\mu=-V_4$ by radial integration by parts.
Optimizing $\lambda$ yields
$D_{4,\perp}m_2\ge(A+1)^2V_4^2$.
The circulation maximum principle from \eqref{eq:Gamma} gives
$|rv(r,z,t)|\le M_0$, and therefore
\[
 m_2=\int(rv)^2v^2\,d\mu\le M_0^2E_\theta.
\]
This proves \eqref{eq:radial-floor}; the tail estimate follows directly
from $|v|^4=(rv)^2|v|^2/r^2$. Smooth axial compatibility
and decay justify the integrations, and approximation gives the stated
form inequality. Since
$3|v|^2|\partial_rv|^2=\tfrac34|\partial_r(v^2)|^2$,
the definition in \eqref{eq:quartic-dissipation-def} gives
$D_4\ge D_{4,\perp}$.
\end{proof}

\begin{proposition}\label{prop:axial-uncertainty-d18}
In addition to the notation of \cref{prop:radial-floor}, suppose
$V_4(t)>0$ and the axial second moment of $|v(t)|^4$ is finite. Put
\[
 h_4(t)^2=\inf_{Z\in\mathbb R}\frac{1}{V_4(t)}
       \int (z-Z)^2|v(t)|^4\,d\mu,
 \qquad
 D_{4,z}(t)=\frac34\int|\partial_z(v(t)^2)|^2\,d\mu.
\]
Then
\begin{equation}\label{eq:axial-uncertainty-d18}
 \frac{D_{4,z}}{V_4}\ge\frac{3}{16h_4^2},
 \qquad
 \frac{D_4}{V_4}\ge
 C_{\rm rad}\frac{V_4}{M_0^2E_\theta}+\frac{3}{16h_4^2}.
\end{equation}
In particular, whenever $V_4'(t)\ge0$,
\[
 \left\|\left(\frac{u^r}{r}\right)_-\right\|_\infty
 \ge\nu\left(C_{\rm rad}\frac{V_4}{M_0^2E_\theta}
                        +\frac{3}{16h_4^2}\right).
\]
\end{proposition}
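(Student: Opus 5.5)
The plan is to prove the axial bound by a Heisenberg-type uncertainty inequality in $z$ alone, applied to $g=v^2$. The total bound then follows by adding it to \cref{prop:radial-floor}, and the inflow bound comes from the quartic balance \eqref{eq:V4rate}.

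\emph{Step 1: axial uncertainty.} Fix $Z\in\R$ and write $g=v(t)^2$, so that $V_4=\int g^2\,d\mu$. Since $d\mu=r\,dr\,dz$ carries no $z$-weight, integration by parts in $z$ alone gives
\[
 V_4=\int g^2\,\partial_z(z-Z)\,d\mu=-2\int (z-Z)\,g\,\partial_zg\,d\mu .
\]
The boundary terms vanish by decay: finiteness of the axial second moment, together with smoothness, justifies this after a cutoff approximation in $z$. Cauchy--Schwarz then yields
\[
 V_4^2\le4\Bigl(\int(z-Z)^2g^2\,d\mu\Bigr)\Bigl(\int|\partial_zg|^2\,d\mu\Bigr).
\]
Insert $\int|\partial_zg|^2\,d\mu=\tfrac43D_{4,z}$ and the moment $\int(z-Z)^2g^2\,d\mu=V_4h_Z^2$. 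This gives $V_4^2\le\tfrac{16}{3}V_4h_Z^2D_{4,z}$, that is $D_{4,z}/V_4\ge 3/(16h_Z^2)$. Taking the infimum over $Z$ gives the first inequality in \eqref{eq:axial-uncertainty-d18}.

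\emph{Step 2: combination.} Since $3|v|^2|\nabla v|^2=\tfrac34|\partial_r(v^2)|^2+\tfrac34|\partial_z(v^2)|^2$, the definition \eqref{eq:quartic-dissipation-def} splits exactly as $D_4=D_{4,\perp}+D_{4,z}$. Combining \eqref{eq:radial-floor} with Step 1 gives the second inequality. The point to check is that no cross term is lost: the radial floor uses only $\partial_r$ together with the $|v|^4/r^2$ term, while Step 1 uses only $\partial_z$.

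\emph{Step 3: inflow.} By \eqref{eq:V4rate},
\[
 V_4'/V_4=-4\,\mathbb E_{\pi_4}[U]-4\nu D_4/V_4 .
\]
If $V_4'\ge0$, then $\nu D_4/V_4\le-\mathbb E_{\pi_4}[U]$. Because $\pi_4$ is a probability measure, $-\mathbb E_{\pi_4}[U]\le\|U_-\|_\infty$, and inserting Step 2 gives the stated lower bound.

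I expect the only delicate point to be justifying the boundary terms in Step 1 under merely a finite second moment. The plan is to integrate against $(z-Z)\phi(z/L)$ and let $L\to\infty$, using $g\in L^2$, $\partial_zg\in L^2$ and the moment bound to control the cutoff error by dominated convergence.
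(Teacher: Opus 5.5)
Your proposal is correct and follows the paper's proof essentially step for step. The steps are: the axial integration by parts $V_4=-2\int(z-Z)g\,\partial_zg\,d\mu$, then Cauchy--Schwarz and the infimum over $Z$; next the exact split $D_4=D_{4,\perp}+D_{4,z}$ combined with \eqref{eq:radial-floor}; and finally the quartic balance for the inflow bound. The cutoff justification of the boundary terms under a finite second moment is a detail the paper leaves implicit.
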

\begin{proof}
For $g=v^2$, integration in $z$ gives
$\int g^2\,d\mu=-2\int(z-Z)gg_z\,d\mu$.
Cauchy--Schwarz, followed by the infimum over $Z$, yields
$\int|g_z|^2\,d\mu\ge V_4/(4h_4^2)$.
The first estimate follows by multiplication by $3/4$.
The identity $D_4=D_{4,\perp}+D_{4,z}$ and
\eqref{eq:radial-floor} give the second; the quartic balance
$V_4'=-4\int Uv^4\,d\mu-4\nu D_4$ gives the last assertion.
The two sectors partition the \emph{existing} viscous debit.
\end{proof}

The axial variance is a \emph{global} quartic width; it need not
measure a localized near-circulation-cap ring in the presence of
distant quartic mass. No estimate for the signed source--receiver
interaction follows from \eqref{eq:axial-uncertainty-d18} alone.

\begin{corollary}\label{cor:radial-compression-tax}
Under the hypotheses of \cref{prop:radial-floor}, write
$U_- =(-u^r/r)_+$. Then
\begin{equation}\label{eq:radial-tax}
 V_4'(t)\le4\|U_-(t)\|_\infty V_4(t)
 -\frac{4\nu C_{\rm rad}V_4(t)^2}{M_0^2E_\theta(t)}.
\end{equation}
Consequently at each regular time at which $V_4'(t)\ge0$,
\begin{equation}\label{eq:radial-compressor-threshold}
 \|U_-(t)\|_\infty\ge
 \frac{\nu C_{\rm rad}V_4(t)}{M_0^2E_\theta(t)}.
\end{equation}
\end{corollary}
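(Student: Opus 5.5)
The plan is to start from the exact quartic balance for the swirl, already recorded in \eqref{eq:V4rate} and in the proof of \cref{prop:axial-uncertainty-d18}:
\[
 V_4'=-4\int Uv^4\,d\mu-4\nu D_4 .
\]
The first term is bounded pointwise by keeping only the compressive part of $U$. Since $-U\le U_-$ and $v^4\ge0$, we have
\[
 -4\int Uv^4\,d\mu\le4\int U_-v^4\,d\mu\le4\|U_-\|_\infty V_4 .
\]
This only drops the expansive contribution, which enters with a favorable sign.

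For the dissipation term, use $D_4\ge D_{4,\perp}$ together with the radial floor \eqref{eq:radial-floor} of \cref{prop:radial-floor}. Together they give
\[
 -4\nu D_4\le-4\nu C_{\rm rad}\frac{V_4^2}{M_0^2E_\theta}
\]
at every later regular time. The hypotheses needed are exactly those of \cref{prop:radial-floor}: $M_0$ finite and nonzero, and $v\not\equiv0$, which ensures $E_\theta>0$. Adding the two bounds yields \eqref{eq:radial-tax}.

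For \eqref{eq:radial-compressor-threshold}, suppose $V_4'(t)\ge0$. Inequality \eqref{eq:radial-tax} then gives
\[
 0\le4V_4\Bigl(\|U_-\|_\infty-\frac{\nu C_{\rm rad}V_4}{M_0^2E_\theta}\Bigr).
\]
If $V_4(t)>0$, divide by $4V_4$. If $V_4(t)=0$, the threshold is trivial, since its right side vanishes.

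The only point needing care is justifying the quartic balance itself at regular times. This requires the differentiation and integration by parts with the measure $d\mu$, where the transport term drops out by incompressibility and the axis boundary terms vanish. This has already been done in \cref{prop:sigma4}, so no genuine obstacle is expected.
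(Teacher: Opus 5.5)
Your proposal is correct and follows the paper's proof essentially verbatim. It uses the exact quartic balance $V_4'=-4\int U|v|^4\,d\mu-4\nu D_4$, bounds the stretching term via $-U\le U_-$, and bounds the dissipation via $D_4\ge D_{4,\perp}$ and \eqref{eq:radial-floor}, then rearranges when $V_4'\ge0$; your separate handling of $V_4(t)=0$ is a harmless detail the paper leaves implicit.
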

\begin{proof}
The exact quartic identity is
\[
 V_4'=-4\int U|v|^4\,d\mu-4\nu D_4.
\]
Estimate the first term by $4\|U_-\|_\infty V_4$ and
apply \cref{prop:radial-floor}. The second assertion follows by
rearranging when $V_4'\ge0$.
\end{proof}

The lower bound \eqref{eq:radial-floor} belongs to the \emph{same}
quartic debit already present in the signed source action, not an
independent dissipative budget. No optimality is claimed for the combined circulation bound
\eqref{eq:radial-floor}. Neither
\eqref{eq:radial-compressor-threshold} nor
\eqref{eq:quartic-radial-tail} gives an upper bound for $U_-$ or
excludes a migrating, increasingly thin swirl concentration.

For the next proposition, use \emph{full} Lebesgue measures on
$\mathbb R^3$ and $\mathbb R^5$. The lifted variable $F=v/r$ is
viewed as a radial function of $y\in\mathbb R^4$, with $r=|y|$.
\begin{proposition}\label{prop:lorentz-port}
Let $u$ be a smooth finite-energy unforced solution on $[a,b]$ and put
$E(a)=\frac12\|u(a)\|_{L^2(\mathbb R^3)}^2$. Then
\begin{equation}\label{eq:five-dimensional-fisher}
 \|\nabla_5F(t)\|_{L^2(\mathbb R^5)}^2
 =\pi\|\nabla(v(t)e_\theta)\|_{L^2(\mathbb R^3)}^2,
\end{equation}
and, for a dimensional constant $C_{5,L}$,
\begin{equation}\label{eq:lorentz-port}
 \nu\int_a^b\|F(t)^2\|_{L^{5/3,1}(\mathbb R^5)}\,dt
 \le\pi C_{5,L}E(a).
\end{equation}
No global circulation bound is needed for this estimate.
\end{proposition}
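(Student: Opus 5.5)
The identity \eqref{eq:five-dimensional-fisher} is a direct computation in cylindrical coordinates, and \eqref{eq:lorentz-port} then follows from the Lorentz-refined Sobolev embedding in $\R^5$ combined with the energy inequality.

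\textbf{Step 1: the Fisher identity.} For the axisymmetric field $ve_\theta$ we have
\[
 |\nabla(ve_\theta)|^2=|\nabla_{r,z}v|^2+\frac{v^2}{r^2}.
\]
Hence
\[
 \|\nabla(ve_\theta)\|_{L^2(\R^3)}^2
 =2\pi\int\Bigl(|\nabla v|^2+\frac{v^2}{r^2}\Bigr)r\,dr\,dz .
\]
On the lifted side, $F=v/r$ is radial in $y\in\R^4$. The area of $S^3$ is $2\pi^2$, so
\[
 \|\nabla_5F\|_{L^2(\R^5)}^2=2\pi^2\int\bigl(F_r^2+F_z^2\bigr)r^3\,dr\,dz .
\]
Next write $F_r=v_r/r-v/r^2$ and expand:
\[
 r^3F_r^2=rv_r^2-2vv_r+\frac{v^2}{r}.
\]
The cross term is $-\partial_r(v^2)$. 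Integrating in $r$ it contributes only a boundary term at the axis, which vanishes because $v=O(r)$ there. We are left with
\[
 \int\bigl(v_r^2+v_z^2\bigr)r\,dr\,dz+\int\frac{v^2}{r^2}\,r\,dr\,dz .
\]
Comparing the prefactors $2\pi^2$ and $2\pi$ gives the factor $\pi$ in \eqref{eq:five-dimensional-fisher}. Decay at infinity and smooth axis compatibility justify the integrations by parts.

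\textbf{Step 2: Lorentz Sobolev embedding.} The sharp form of Sobolev's inequality on $\R^5$ (O'Neil, or Peetre's Lorentz refinement) states
\[
 \|F\|_{L^{10/3,2}(\R^5)}\le C\|\nabla_5F\|_{L^2(\R^5)}.
\]
Squaring in Lorentz spaces is a quasi-norm identity, $\|F^2\|_{L^{p/2,q/2}}\simeq\|F\|_{L^{p,q}}^2$. This yields
\[
 \|F^2\|_{L^{5/3,1}(\R^5)}\le C_{5,L}\|\nabla_5F\|_{L^2(\R^5)}^2 .
\]
The second index is $1$ precisely because the embedding holds at second index $2$, which is where the improvement over plain $L^{5/3}$ comes from.

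\textbf{Step 3: time integration and energy.} Combining Step 2 with \eqref{eq:five-dimensional-fisher} gives
\[
 \|F^2(t)\|_{L^{5/3,1}}\le\pi C_{5,L}\|\nabla u(t)\|_{L^2(\R^3)}^2 .
\]
Here we use $\|\nabla(ve_\theta)\|_2\le\|\nabla u\|_2$. This holds because $ve_\theta$ and the meridional part $u^re_r+u^ze_z$ are $L^2$-orthogonal at the gradient level: in cylindrical components their gradient entries occupy disjoint slots. Finally, the energy equality for smooth solutions gives
\[
 \nu\int_a^b\|\nabla u\|_2^2\,dt\le E(a),
\]
which is \eqref{eq:lorentz-port}. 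No circulation bound enters anywhere in this chain.

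\textbf{Main obstacle.} I expect the delicate points to be the following.
\begin{itemize}[label={}]
 \item The orthogonality splitting $\|\nabla u\|_2^2=\|\nabla(ve_\theta)\|_2^2+\|\nabla b\|_2^2$ must be checked. It holds because cross terms pair $e_\theta$-components only with the meridional components, and these vanish by axisymmetry.
 \item The constant normalisation in the quasi-norm squaring step needs care.
 \item The endpoint Lorentz embedding must be stated with a genuinely finite constant; I would cite it from O'Neil's convolution inequality applied to the Riesz potential.
\end{itemize}
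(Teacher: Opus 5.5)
Your proposal is correct and follows essentially the paper's route: a cylindrical-coordinate computation for \eqref{eq:five-dimensional-fisher} with the axis boundary term vanishing and the $2\pi^2$ versus $2\pi$ prefactors giving $\pi$, then the $\dot H^1(\R^5)\hookrightarrow L^{10/3,2}(\R^5)$ embedding squared into $L^{5/3,1}$, and finally the kinetic energy identity. The differences are only cosmetic: you expand $F_r$ in terms of $v$ where the paper expands $v_r=F+rF_r$. You also make explicit the pointwise splitting $|\nabla u|^2=|\nabla(ve_\theta)|^2+|\nabla b|^2$, which the paper uses implicitly when it bounds $\nu\int_a^b\|\nabla_5F\|_2^2\,dt$ by $\pi E(a)$.
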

\begin{proof}
With $v=rF$, the physical angular-gradient identity is
\[
\int_{\mathbb R^3}|\nabla(ve_\theta)|^2dx
 =2\pi\int_0^\infty\!\int_{\mathbb R}
  \bigl(v_r^2+v_z^2+v^2/r^2\bigr)r\,dz\,dr.
\]
Expanding $v_r=F+rF_r$ and integrating
$2r^2FF_r= r^2\partial_r(F^2)$ by parts cancels the
remaining $2rF^2$ term. The last display becomes
$2\pi\int r^3|\nabla_{r,z}F|^2\,dr\,dz$.
Since the area of $S^3$ is $2\pi^2$, this is precisely
\eqref{eq:five-dimensional-fisher}.
The physical energy identity now yields
$\nu\int_a^b\|\nabla_5F\|_2^2dt\le\pi E(a)$.
By the standard five-dimensional Sobolev--Lorentz embedding
$\dot H^1(\mathbb R^5)\hookrightarrow L^{10/3,2}(\mathbb R^5)$,
\[
 \|F^2\|_{L^{5/3,1}}\le C_{5,L}\|\nabla_5F\|_2^2.
\]
Integrating proves \eqref{eq:lorentz-port}.
\end{proof}

The source norm in \eqref{eq:lorentz-port} is paid by the
\emph{existing} kinetic-energy dissipation. It cannot be counted as
additional dissipation in \eqref{eq:signed-source-intro}.
The next identity records the exact causal placement that is still
uncontrolled by that source norm.

\begin{proposition}\label{prop:source-ordered-dual}
Fix one regular source cut $a$. Let $\mathcal S_b(t,s)$ be the true
propagator for $\partial_t+b\cdot\nabla-\nu\Delta_5$, with the
meridional drift of the solution, and put
\[
 f(s)=F(s)^2,\quad H=-\partial_z(-\Delta_5)^{-1},\quad
 W(t)=r^2F(t)^4,\quad \mathcal V(t)=\int_{\mathbb R^5}W(t)\,dX.
\]
For $a\le s\le\tau<T$ define
\[
 p_\tau(s)=\mathcal S_b(\tau,s)^*H^*
            \frac{W(\tau)}{\mathcal V(\tau)},\qquad
 Q_t(s)=\int_s^t\partial_zp_\tau(s)\,d\tau.
\]
Then the actual source-generated cumulative compression obeys
\begin{equation}\label{eq:causal-lorentz-identity}
 C_a(t):=-\int_a^t
 \frac{\int U_{\rm src}(\tau)|v(\tau)|^4\,d\mu}{V_4(\tau)}\,d\tau
 =\int_a^t\langle F(s)^2,Q_t(s)\rangle_{\mathbb R^5}\,ds.
\end{equation}
In particular the signed action of \cref{thm:signed-source-intro}
remains
\begin{equation}\label{eq:causal-action-once}
 N_a(t)=C_a(t)-\nu\int_a^tD_4(\tau)/V_4(\tau)\,d\tau.
\end{equation}
\end{proposition}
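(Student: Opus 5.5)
The identity is a duality computation. I would first rewrite the receiver in lifted five-dimensional variables, then express $U_{\rm src}(\tau)$ by Duhamel through the true propagator, and finally exchange the order of the $s$ and $\tau$ integrations.

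\emph{Step 1: lifted form of the receiver.} Since $v=rF$, we have $|v|^4\,d\mu=r^4F^4\,r\,dr\,dz=r^2F^4\,r^3dr\,dz=W\,dm_5$. With the normalization $dX=2\pi^2 dm_5$ on radial functions, the factor $2\pi^2$ cancels between numerator and $\mathcal V$. Hence
\[
 \frac{\int U_{\rm src}|v|^4\,d\mu}{V_4}=\Bigl\langle U_{\rm src},\frac{W}{\mathcal V}\Bigr\rangle_{\mathbb R^5}.
\]
The normalization has to be matched exactly; otherwise only a harmless constant appears.

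\emph{Step 2: Duhamel for the source current.} The source tag satisfies $(\partial_t+b\cdot\nabla-\nu\Delta_5)G_{\rm src}=\partial_z f$ with $G_{\rm src}(a)=0$. Duhamel gives
\[
 G_{\rm src}(\tau)=\int_a^\tau\mathcal S_b(\tau,s)\partial_zf(s)\,ds.
\]
Moreover $U_{\rm src}=-\partial_zP_5G_{\rm src}=HG_{\rm src}$. Pairing with $W(\tau)/\mathcal V(\tau)$ and moving $H$ and the propagator to the right by adjoints yields
\[
 \langle U_{\rm src}(\tau),W/\mathcal V\rangle=\int_a^\tau\langle\partial_zf(s),p_\tau(s)\rangle\,ds.
\]
Integrating by parts in $z$ (the lifted measure is $z$-translation invariant, and decay kills boundary terms) gives $-\int_a^\tau\langle f(s),\partial_zp_\tau(s)\rangle\,ds$.

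\emph{Step 3: order exchange.} Insert the sign from the definition of $C_a$ and integrate over $\tau\in[a,t]$. Applying Fubini on the triangle $a\le s\le\tau\le t$ turns the double integral into $\int_a^t\langle f(s),\int_s^t\partial_zp_\tau(s)\,d\tau\rangle\,ds$, which is \eqref{eq:causal-lorentz-identity}. Identity \eqref{eq:causal-action-once} is then the definition of $N_a$ with $\sigma_{\rm src}$ integrated, since $\int_a^t\sigma_{\rm src}=C_a(t)$ by Step 1.

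\emph{Main obstacle: justification.} The algebra is formal; the real work is justification. Three things must be checked. First, the adjoint propagator $\mathcal S_b(\tau,s)^*$ must be well defined on the relevant class. It is a backward transport–diffusion with drift $-b$, and it is well posed because $b$ is divergence free for $d\mu$; in the lifted measure the divergence of $b$ is an explicit multiple of $U$, so the adjoint carries a zeroth-order term, which is bounded on compact regular intervals. Second, $H^*W/\mathcal V$ must decay enough for $\partial_zp_\tau$ to lie in the dual of $F^2$. Third, Fubini requires absolute integrability on the triangle. I would handle all three by working on a compact interval $[a,t]\subset[a,T)$, where the solution is smooth and decaying, so every pairing is absolutely convergent. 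Near $s=\tau$ I would use heat-kernel bounds for $\partial_z\mathcal S^*$ of order $(\tau-s)^{-1/2}$, which are integrable.
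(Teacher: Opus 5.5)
Your proposal is correct and follows the paper's proof essentially step for step. You lift $|v|^4\,d\mu=W\,dm_5$ so the angular constants cancel in the ratio, write $U_{\rm src}=HG_{\rm src}$ with Duhamel through the true propagator $\mathcal S_b$, move $H$ and $\mathcal S_b$ onto the receiver by adjoints, integrate by parts in $z$, and exchange the $s$ and $\tau$ integrals on the triangle $a\le s\le\tau\le t$. Your justification remarks go beyond the paper's brief proof and are sound: the lifted adjoint does carry the zeroth-order term from $\operatorname{div}_5 b=2U$, and the argument works on a compact regular interval. Note, however, that no $(\tau-s)^{-1/2}$ singularity actually arises in $\partial_z p_\tau(s)$, because the terminal datum $H^*W(\tau)/\mathcal V(\tau)$ is already smooth.
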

\begin{proof}
By Duhamel's formula and \eqref{eq:G},
$G_{\rm src}(\tau)=\int_a^\tau\mathcal S_b(\tau,s)
\partial_zf(s)\,ds$ and $U_{\rm src}=HG_{\rm src}$.
The reduced $r\,dr\,dz$ pairing for $U_{\rm src}v^4$ is the same
as the reduced $r^3dr\,dz$ pairing for $U_{\rm src}W$.
The ratio removes the common angular constants. Thus
\[
-\frac{\int U_{\rm src}(\tau)|v(\tau)|^4\,d\mu}{V_4(\tau)}
=-\int_a^\tau\langle \partial_zf(s),p_\tau(s)\rangle_{\mathbb R^5}\,ds
=\int_a^\tau\langle f(s),\partial_zp_\tau(s)\rangle\,ds.
\]
Integrate in $\tau$ and reverse time integration on the finite
regular triangle $a\le s\le\tau\le t$.
\end{proof}

The estimate \eqref{eq:lorentz-port} bounds the time integral of the
source norm, whereas \eqref{eq:causal-lorentz-identity} requires a
signed overlap with the actual backward terminal receiver. If
$Q_t(s)\in L^{5/2,\infty}(\mathbb R^5)$, Lorentz duality gives the
\emph{conditional} bound
\[
 C_a(t)\le C\int_a^t\|F(s)^2\|_{L^{5/3,1}}
                  \|[Q_t(s)]_+\|_{L^{5/2,\infty}}\,ds.
\]
No corresponding all-history bound for this true, report-dependent
dual follows from \eqref{eq:lorentz-port}. In particular, the
source-age integrand $\langle F(s)^2,Q_t(s)\rangle$ is not
$\frac{d}{dt}N_a(t)$: changing $t$ also changes the terminal receiver.
Neither a static frozen resolvent nor positive-part optimization
supplies that missing estimate.

\begin{proposition}\label{prop:weighted-axial-variation}
Let $u$ be a smooth finite-energy unforced axisymmetric solution on
$[a,b]$, and set $v=u^\theta$, $F=v/r$ and
$E(t)=\tfrac12\|u(t)\|_{L^2(\R^3)}^2$. Then, with physical measure
$dx=2\pi r\,dr\,dz$,
\begin{equation}\label{eq:weighted-axial-variation}
 \nu\int_a^b\!\int_{\R^3}r|\partial_z(F^2)|\,dx\,ds
 \le E(a)-E(b)\le E(a).
\end{equation}
The power one of $r$ cannot be replaced by any smaller nonnegative
power in a uniform estimate funded only by $E(a)$.
\end{proposition}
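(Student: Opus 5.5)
The estimate follows from a pointwise bound: the weighted axial derivative of $F^2$ is dominated by the swirl part of the physical gradient density. It then suffices to integrate against the kinetic-energy identity. The sharpness claim follows from Navier--Stokes scaling. No circulation bound or moment is used, matching the remark after \cref{prop:lorentz-port}.

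\emph{Pointwise bound.} With $F=v/r$ we have $\partial_zF=v_z/r$, hence
\[
 r|\partial_z(F^2)|=2r|F||F_z|=\frac{2|v||v_z|}{r}\le \frac{v^2}{r^2}+v_z^2
 \le v_r^2+v_z^2+\frac{v^2}{r^2}.
\]
The right side is the density of $|\nabla(ve_\theta)|^2$, as in the angular-gradient identity used in the proof of \cref{prop:lorentz-port}. For an axisymmetric field $u=u^re_r+u^ze_z+ve_\theta$, the swirl and meridional components have orthogonal gradients. Indeed, $\nabla(ve_\theta)$ has only $e_r\otimes e_\theta$, $e_\theta\otimes e_r$ and $e_z\otimes e_\theta$ type entries, and the meridional gradient has none of these. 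Consequently
\[
 |\nabla u|^2=|\nabla(u^re_r+u^ze_z)|^2+|\nabla(ve_\theta)|^2\ge|\nabla(ve_\theta)|^2
\]
pointwise, and so $r|\partial_z(F^2)|\le|\nabla u|^2$ pointwise for $r>0$.

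\emph{Energy step.} For the smooth finite-energy unforced solution on $[a,b]$, the energy identity reads
\[
 E(a)-E(b)=\nu\int_a^b\|\nabla u\|_{L^2(\R^3)}^2\,ds .
\]
Integrating the pointwise bound in $x$ and $s$ gives \eqref{eq:weighted-axial-variation}. The axis $\{r=0\}$ has measure zero, and smooth axis compatibility makes the integrand locally integrable, so no boundary terms arise.

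\emph{Sharpness.} Suppose $0\le\beta<1$ and assume a uniform bound
\[
 \nu\int_a^b\!\int r^\beta|\partial_z(F^2)|\,dx\,ds\le C\,E(a)
\]
holds for all solutions and intervals. Fix one smooth axisymmetric datum with nonzero swirl for which $\partial_z(F^2)\not\equiv0$ at time $0$. On a short interval $[0,b]$ of its strong solution, the left side (with weight $r^\beta$) is a positive number $c_0$.

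Apply the exact Navier--Stokes scaling with the same $\nu$,
\[
 u_\lambda(x,t)=\lambda u(\lambda x,\lambda^2t),
\]
on the interval $[0,b/\lambda^2]$. It preserves axisymmetry and gives $F_\lambda(x,t)=\lambda^2F(\lambda x,\lambda^2t)$. Counting powers for the weighted quantity:
\begin{itemize}
 \item $F^2$ contributes $\lambda^4$ and $\partial_z$ contributes $\lambda$;
 \item the weight $r^\beta$ contributes $\lambda^{-\beta}$;
 \item $dx$ contributes $\lambda^{-3}$ and $ds$ contributes $\lambda^{-2}$.
\end{itemize}
The left side therefore equals $\lambda^{-\beta}c_0$, while $E_\lambda(0)=\lambda^{-1}E(0)$. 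The required inequality becomes $\lambda^{1-\beta}c_0\le CE(0)$, which fails as $\lambda\to\infty$.

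The main point needing care is the pointwise orthogonality of the swirl and meridional gradients, so that only the swirl part $|\nabla(ve_\theta)|^2$ is charged. This is a standard cylindrical-coordinates computation. Positivity of $c_0$ holds by continuity in time, starting from a datum with $\partial_z(F_0^2)\not\equiv0$.
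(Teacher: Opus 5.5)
Your proposal is correct and follows the paper's proof. It uses the same pointwise Young bound $r|\partial_z(F^2)|\le v^2/r^2+v_z^2$ integrated against the kinetic energy identity, and the same Navier--Stokes scaling count ($\lambda^{-\beta}$ against $\lambda^{-1}$) for sharpness; your orthogonality of the swirl and meridional gradients only makes explicit the step $|\nabla(ve_\theta)|^2\le|\nabla u|^2$ that the paper leaves implicit.
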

\begin{proof}
The exact pointwise identity and Young's inequality give
\[
 r|\partial_z(F^2)|
 =2\left|\frac vr\,\partial_zv\right|
 \le \frac{v^2}{r^2}+|\partial_zv|^2.
\]
The right-hand side is a part of
$|\nabla(ve_\theta)|^2$; integrate and apply the ordinary kinetic
energy identity. For the sharpness assertion, choose a nontrivial smooth
whole-space pure-swirl datum for which $\partial_z(F^2)$ is not
identically zero, and restrict to a short interval of smooth evolution.
Under $u_\lambda(x,t)=\lambda u(\lambda x,\lambda^2t)$, its energy scales
as $\lambda^{-1}$, whereas the spacetime integral with weight
$r^\alpha$ scales as $\lambda^{-\alpha}$. Their ratio diverges for
$\alpha<1$. This compares different data, not one singular history.
\end{proof}

\begin{proposition}\label{prop:causal-square-radial}
Use the fixed cut and genuine terminal adjoints of
\cref{prop:source-ordered-dual}. For $a<t<T$ put
\[
 \Pi_t(s)=2\pi^2\int_s^t p_\tau(s)\,d\tau,\quad
 w(s)=v(s)\partial_zv(s),\quad k(s)=\frac{3\nu}{V_4(s)}.
\]
Assume $V_4(s)>0$ and
$\int_a^t V_4(s)\|\Pi_t(s)\|_{L^2(d\mu)}^2ds<\infty$,
and the associated finite source-adjoint pairings are justified. Define
\begin{align*}
 \mathcal H_a(t)&=\int_a^t\frac{V_4(s)}{3\nu}
             \int|\Pi_t(s)|^2\,d\mu\,ds,\\
 \mathcal E_{{\rm sq},a}(t)&=\int_a^t\frac{3\nu}{V_4(s)}
       \int\left|w(s)+\frac{V_4(s)}{3\nu}\Pi_t(s)\right|^2
             d\mu\,ds,\\
 \mathcal E_{\perp,a}(t)&=\nu\int_a^t
                 \frac{D_{4,\perp}(s)}{V_4(s)}\,ds.
\end{align*}
Then the original source-only signed action has the exact decomposition
\begin{equation}\label{eq:causal-square-radial}
 \boxed{N_a(t)=\mathcal H_a(t)-\mathcal E_{{\rm sq},a}(t)
                         -\mathcal E_{\perp,a}(t).}
\end{equation}
All three terms on the right are nonnegative separately. If also
$0<M_0=\|rv(a)\|_\infty<\infty$, then
\begin{equation}\label{eq:causal-square-radial-floor}
 N_a(t)\le\mathcal H_a(t)-\mathcal E_{{\rm sq},a}(t)
 -\frac{\nu C_{\rm rad}}{M_0^2}
       \int_a^t\frac{V_4(s)}{E_\theta(s)}\,ds.
\end{equation}
\end{proposition}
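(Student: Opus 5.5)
The plan is to start from the causal representation of \cref{prop:source-ordered-dual}, namely $N_a(t)=C_a(t)-\nu\int_a^tD_4/V_4$ with $C_a(t)=\int_a^t\langle F(s)^2,Q_t(s)\rangle_{\R^5}\,ds$. I would rewrite $C_a$ as a reduced-measure pairing of $w=v\partial_zv$ with $\Pi_t$, and then complete a square against the axial part of the quartic debit. Nothing new is estimated: the identity \eqref{eq:causal-square-radial} should be pure algebra plus one integration by parts in $z$.

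\emph{Step 1 (rewriting $C_a$).} Since $p_\tau(s)$ is differentiated only in $z$, I interchange $\partial_z$ with the $\tau$-integral, which is justified by the finiteness assumption on $\Pi_t$ and the assumed justified pairings. This gives $Q_t(s)=\partial_z\Pi_t(s)/(2\pi^2)$. Integrating by parts in $z$ then yields $\langle F^2,Q_t\rangle_{\R^5}=-(2\pi^2)^{-1}\langle\partial_z(F^2),\Pi_t\rangle_{\R^5}$. Next I use $\partial_z(F^2)=2FF_z=2v\,\partial_zv/r^2=2w/r^2$. For radial functions of $y\in\R^4$ the full $\R^5$ measure is $2\pi^2r^3\,dr\,dz$, so the angular constant cancels and
\[
 C_a(t)=-2\int_a^t\!\int w(s)\,\Pi_t(s)\,d\mu\,ds .
\]
I will check this sign carefully against the definition of $p_\tau$, since $H=-\partial_zP_5$ and the adjoint already contain signs. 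That consistency check is the main bookkeeping risk.

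\emph{Step 2 (splitting the debit).} From \eqref{eq:quartic-dissipation-def}, $D_4=D_{4,\perp}+D_{4,z}$ with $D_{4,z}=\tfrac34\int|\partial_z(v^2)|^2\,d\mu=3\int w^2\,d\mu$, exactly as in \cref{prop:axial-uncertainty-d18}. With $k=3\nu/V_4$ this gives
\[
 N_a(t)=\int_a^t\!\int\bigl(-2w\Pi_t-k w^2\bigr)\,d\mu\,ds-\mathcal E_{\perp,a}(t).
\]

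\emph{Step 3 (completing the square).} Pointwise,
\[
 -kw^2-2w\Pi_t=-k\,|w+\Pi_t/k|^2+\Pi_t^2/k .
\]
Here $1/k=V_4/(3\nu)$, so the two terms integrate to $-\mathcal E_{{\rm sq},a}$ and $\mathcal H_a$ respectively. This is \eqref{eq:causal-square-radial}. The integrability hypothesis $\int V_4\|\Pi_t\|_2^2<\infty$ makes $\mathcal H_a$ finite, and hence $\mathcal E_{{\rm sq},a}$ finite too, because $w$ enters $D_4$. All three terms are manifestly nonnegative: two are integrals of squares with positive weights, and $D_{4,\perp}\ge0$.

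\emph{Step 4 (radial floor).} For \eqref{eq:causal-square-radial-floor}, insert \eqref{eq:radial-floor} from \cref{prop:radial-floor} into $\mathcal E_{\perp,a}$. This gives $\nu D_{4,\perp}/V_4\ge\nu C_{\rm rad}V_4/(M_0^2E_\theta)$ at each regular time.

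The only genuine obstacle is the justification in Step 1. This covers exchanging $\partial_z$ with the $\tau$-integral of the terminal adjoints, the vanishing of $z$-boundary terms, and tracking the sign conventions of $H^*$ so that the cross term is exactly $-2\int w\Pi_t$. The standing assumption that the finite source-adjoint pairings are justified is what I would invoke there.
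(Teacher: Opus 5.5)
Your proposal is correct and follows the paper's proof essentially line for line: reversing the axial derivative in \eqref{eq:causal-lorentz-identity} gives $C_a(t)=-2\int_a^t\!\int w\,\Pi_t\,d\mu\,ds$, the debit splits as $D_4=D_{4,\perp}+3\int w^2\,d\mu$, the square is completed pointwise with $k=3\nu/V_4$, and \eqref{eq:radial-floor} bounds $\mathcal E_{\perp,a}$ from below. The sign you flag as a risk is already fixed by the identity $C_a(t)=\int_a^t\langle F^2,Q_t\rangle\,ds$ from \cref{prop:source-ordered-dual}, so the only sign to track is the one minus sign from the $z$-integration by parts.
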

\begin{proof}
In \cref{prop:source-ordered-dual}, reverse the axial derivative:
$C_a(t)=-\int_a^t\!\int \partial_z(F^2)
[\int_s^tp_\tau(s)d\tau]dX\,ds$.
Since $\partial_z(F^2)=2v\partial_zv/r^2$ and
$dX=2\pi^2r^3dr\,dz$, the normalization $d\mu=rdr\,dz$ gives
\[
 C_a(t)=-2\int_a^t\!\int w(s)\Pi_t(s)\,d\mu\,ds.
\]
The axial part of the original, single quartic debit is
$D_{4,z}=3\int w^2\,d\mu$; its remaining part is exactly
$D_{4,\perp}$ from \cref{prop:radial-floor}. Applying
$-2wp-kw^2=p^2/k-k|w+p/k|^2$ pointwise proves
\eqref{eq:causal-square-radial}. The lower bound for
$\mathcal E_{\perp,a}$ follows from
\eqref{eq:radial-floor}. No second viscosity or energy payment occurs.
\end{proof}

The quantity $\mathcal H_a$ is an unrestricted squared-adjoint
amplitude, not a signed production estimate; $\mathcal E_{{\rm sq},a}$
retains the actual source--receiver phase mismatch. The receiver
$\Pi_t$ depends on the \emph{reporting time} $t$, so this identity must
not be read as a differential inequality in terminal time. In particular,
\eqref{eq:causal-square-radial-floor} is not known to satisfy the
exponential--Ces\`aro condition of \cref{thm:signed-source-intro}.
The additional square-integrability assumption in
\cref{prop:causal-square-radial} is not imposed in the leading theorem.

For comparison with velocity-space regularity criteria, let $b^0$ be
the globally regular no-swirl Navier--Stokes solution starting from
$b(a)$, with $b=u^re_r+u^ze_z$. The exact difference $w_b=b-b^0$ obeys
\[
 (\partial_t-\nu\Delta)w_b
 +\mathbb P\operatorname{div}
 (b^0\otimes w_b+w_b\otimes b^0+w_b\otimes w_b)
 =-\mathbb P\operatorname{div}(v^2e_\theta\otimes e_\theta),
 \quad w_b(a)=0.
\]
When $M_0<\infty$, Cauchy--Schwarz, circulation control and the kinetic
energy identity also give
\begin{equation}\label{eq:angular-L6L3}
 \int_a^b\|v(s)e_\theta\|_{L^3(\R^3)}^6\,ds
 \le \frac{2M_0^2 E(a)^2}{\nu}.
\end{equation}
This estimate concerns the angular component; the meridional response
$w_b$ enters separately in the full-velocity $L^3$ criterion
\cite{Seregin2012}.

\section{Complete histories and finite prefixes}\label{sec:history}
Let
\[
 \Pi_0f=\frac1{2\pi}\int_0^{2\pi}\mathcal T_\alpha f\,d\alpha,
 \qquad
 \Qop=I-\Pi_0,
\]
and
\[
 \Rres[V]=\partial_tV-\nu\Delta V+\Pp((V\cdot\nabla)V).
\]
For a smooth divergence-free history $V$ on $[0,t]$, set
\[
 \cE[V;u_0]^2
 =\|V(0)-u_0\|_2^2
 +\nu^{-1}\|\Rres[V]\|_{L^2(0,t;\dot H^{-1})}^2,
 \qquad
 \cA[V;u_0]=\frac{\|\Qop V(t)\|_2}{\cE[V;u_0]}.
\]

\begin{proposition}\label{prop:horizontal}
For a smooth finite-energy solution, with
$Y=\|\nabla u\|_2^2$ and $H=\|\nabla_hu\|_2$,
\[
 Y'+\nu\|\Delta u\|_2^2\le C\nu^{-3}H^4Y.
\]
Hence $H\in L^4(0,T)$ implies continuation.  If $u$ is axisymmetric and
smooth on every $t<T$, then
\[
 u\text{ extends through }T
 \iff
 \sup_{t<T}\|\partial_1u(t)\|_2<\infty.
\]
\end{proposition}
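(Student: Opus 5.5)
We establish the differential inequality by an enstrophy estimate that isolates the horizontal gradient, in the spirit of the classical anisotropic criteria. Test the equation by $-\Delta u$. The pressure drops out and
\[
 \tfrac12 Y'+\nu\|\Delta u\|_2^2=\int (u\cdot\nabla)u\cdot\Delta u\,dx .
\]
Integrate by parts once in each Cartesian direction to write the trilinear term as a sum of integrals of the form $\int \partial_k u_i\,\partial_i u_j\,\partial_k u_j$. Then use incompressibility, $\partial_3u_3=-\partial_1u_1-\partial_2u_2$, to rewrite every term so that at least one factor is a horizontal derivative $\nabla_h u$. This is the standard rearrangement behind the $\nabla_h u$ criteria of Cao--Titi type.

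The terms are then bounded by $\int|\nabla_hu|\,|\nabla u|^2$. For this we use H\"older and the anisotropic Sobolev/Gagliardo--Nirenberg inequality
\[
 \int |f|\,|g|\,|h|\le C\|f\|_2\,\|g\|_2^{1/2}\|\partial_1 g\|_2^{1/4}\|\partial_2g\|_2^{1/4}\,\|h\|_2^{1/2}\|\partial_3 h\|_2^{1/2},
\]
or more simply $\|\nabla u\|_{L^4}^2\le C\|\nabla u\|_2^{1/2}\|\Delta u\|_2^{3/2}$. This gives
\[
 \Bigl|\int (u\cdot\nabla)u\cdot\Delta u\Bigr|\le C H\,Y^{1/4}\|\Delta u\|_2^{3/2}.
\]
Young with exponents $4/3$ and $4$ absorbs $\frac\nu2\|\Delta u\|_2^2$ and leaves $C\nu^{-3}H^4Y$, which is the stated inequality. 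Gr\"onwall then gives $Y(t)\le Y(a)\exp(C\nu^{-3}\int_a^TH^4)$. So $H\in L^4(0,T)$ bounds $\|\nabla u\|_2$ up to $T$, and strong-solution continuation in $H^1$ (hence in $H^m$ by parabolic smoothing) follows.

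\textbf{Main obstacle.} The delicate step is the rearrangement of the trilinear form. Terms like $\int \partial_3u_i\partial_iu_j\partial_3u_j$ with $i=3$ must be converted via $\partial_3u_3=-\operatorname{div}_hu_h$. The rest must be split carefully so that one factor is honestly $\nabla_hu$ measured in $L^2$, not $L^\infty$. I would follow the index bookkeeping used for $\partial_3u$- and $\nabla_hu$-type criteria, checking each of the nine $(i,k)$ families separately.

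For the axisymmetric equivalence, the direction ``extends $\Rightarrow$ bounded'' is immediate from smoothness on $[0,T]$. Conversely, axisymmetry makes $\|\partial_1u\|_2=\|\partial_2u\|_2$, since rotation by $\pi/2$ about the $z$-axis commutes with the symmetry and maps $\partial_1$ to $\partial_2$ while preserving $L^2$ norms of the rotated field. Hence $H\le\sqrt2\sup\|\partial_1u\|_2$ is bounded on $[0,T)$, so $H\in L^4(0,T)$ as $T<\infty$, and the first part gives continuation.
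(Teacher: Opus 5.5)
Your proposal is correct and follows the paper's proof step for step. You pair with $-\Delta u$ and reduce to $-\sum\int\partial_ku_i\,\partial_iu_j\,\partial_ku_j$. A horizontal factor is explicit for $k=1,2$ and for $k=3$ with $i\in\{1,2\}$, while $\partial_3u_3=-\partial_1u_1-\partial_2u_2$ handles $k=i=3$. Then come H\"older with $\|\nabla u\|_4^2\le CY^{1/4}\|\Delta u\|_2^{3/2}$, Young with exponents $4/3$ and $4$, Gr\"onwall, and $H^2=2\|\partial_1u\|_2^2$ by rotational symmetry.

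So the bookkeeping is much less delicate than your ``main obstacle'' suggests. You should also drop the anisotropic trilinear inequality you wrote first, because it is not scale-consistent: under $x\mapsto\lambda x$ its left side scales like $\lambda^{-3}$ but its right side like $\lambda^{-7/2}$. The isotropic Gagliardo--Nirenberg bound you fall back on is all that is needed.
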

\begin{proof}
Pair \eqref{eq:NS} with $-\Delta u$.  One integration by parts gives
\[
 \frac12Y'+\nu\|\Delta u\|_2^2
 =-\sum_{i,j,k}\int \partial_ku_i\,\partial_i u_j\,\partial_ku_j\,dx.
\]
For $k=1,2$ a horizontal derivative is explicit.  For $k=3$, either $i=1,2$
or, when $i=3$, incompressibility gives
\[
 \partial_3u_3=-\partial_1u_1-\partial_2u_2.
\]
Thus H\"older's inequality yields
\[
 \left|\int(u\cdot\nabla)u\cdot\Delta u\right|
 \le C\|\nabla_hu\|_2\|\nabla u\|_4^2.
\]
Gagliardo--Nirenberg gives
\[
 \|\nabla u\|_4^2
 \le C\|\nabla u\|_2^{1/2}\|\Delta u\|_2^{3/2}
 =CY^{1/4}\|\Delta u\|_2^{3/2}.
\]
Young's inequality with exponents $4/3$ and $4$ gives
\[
 C H Y^{1/4}\|\Delta u\|_2^{3/2}
 \le\frac\nu2\|\Delta u\|_2^2+C\nu^{-3}H^4Y.
\]
Absorb and apply Gronwall.  For an axisymmetric field,
\[
 \|\partial_1u\|_2=\|\partial_2u\|_2,
 \qquad
 H^2=2\|\partial_1u\|_2^2.
\]
\end{proof}

\begin{lemma}\label{lem:relative}
If the exact solution $u$ from $u_0$ extends through $T$, then every complete
smooth divergence-free history $V$ on $[0,t]$, $t<T$, satisfies
\[
 \|V(t)-u(t)\|_2\le e^{K_u(T)}\cE[V;u_0],
 \qquad
 K_u(T)=\int_0^T\kappa_u(s)\,ds,
\]
where
\[
 \kappa_u=\operatorname*{ess\,sup}
 [-\lambda_{\min}(\operatorname{Sym}\nabla u)]_+.
\]
Hence $\cA[V;u_0]\le e^{K_u(T)}$.
\end{lemma}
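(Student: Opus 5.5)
The plan is a weak--strong relative-energy estimate in which the smooth extended solution $u$ plays the strong role. Put $w=V-u$ on $[0,t]$. Subtracting the equations, with $\Rres[V]=R$ and $\Rres[u]=0$, gives
\[
 \partial_tw-\nu\Delta w+\Pp\bigl((V\cdot\nabla)V-(u\cdot\nabla)u\bigr)=R .
\]
The nonlinear difference I would write as
\[
 (V\cdot\nabla)V-(u\cdot\nabla)u=(V\cdot\nabla)w+(w\cdot\nabla)u .
\]
Pairing with $w$ in $L^2(\R^3)$ and using that $\Pp$ is self-adjoint and fixes divergence-free $w$, the transport term $\int (V\cdot\nabla)w\cdot w$ vanishes because $V$ is divergence free and decays. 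The remaining interaction is $\int (w\cdot\nabla u)\cdot w=\int w^{\mathsf T}(\operatorname{Sym}\nabla u)w\ge-\kappa_u\|w\|_2^2$. The sign is favorable here: it enters as $-\int(w\cdot\nabla u)\cdot w\le\kappa_u\|w\|_2^2$. This is exactly where only the negative part of the smallest eigenvalue is needed. The whole-space integrations by parts are legitimate because the histories are complete and the relative-energy calculation is assumed justified, and $u$ is smooth with finite energy on $[0,T]$.

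This yields
\[
 \tfrac12\tfrac{d}{dt}\|w\|_2^2+\nu\|\nabla w\|_2^2\le\kappa_u\|w\|_2^2+\langle R,w\rangle .
\]
The forcing term is bounded by the $\dot H^{-1}$--$\dot H^1$ duality,
\[
 \langle R,w\rangle\le\|R\|_{\dot H^{-1}}\|\nabla w\|_2\le\tfrac\nu2\|\nabla w\|_2^2+\tfrac1{2\nu}\|R\|_{\dot H^{-1}}^2 .
\]
After absorbing the gradient term, I obtain
\[
 \tfrac{d}{dt}\|w\|_2^2\le2\kappa_u\|w\|_2^2+\nu^{-1}\|R\|_{\dot H^{-1}}^2 .
\]
Gronwall from $0$ to $t$ then gives
\[
 \|w(t)\|_2^2\le e^{2\int_0^t\kappa_u}\bigl(\|w(0)\|_2^2+\nu^{-1}\|R\|_{L^2(0,t;\dot H^{-1})}^2\bigr)\le e^{2K_u(T)}\cE[V;u_0]^2 .
\]
The last step uses $\kappa_u\ge0$ and $t<T$. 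Taking square roots gives the first claim.

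For the second claim, note that $u$ is axisymmetric, so $\Qop u(t)=0$. Averaging over rotations is an $L^2$ contraction, so $\Qop$ is a contraction as well. Hence $\|\Qop V(t)\|_2=\|\Qop(V(t)-u(t))\|_2\le\|V(t)-u(t)\|_2$, and dividing by $\cE[V;u_0]$ gives $\cA[V;u_0]\le e^{K_u(T)}$.

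I expect the main obstacle to be the bookkeeping that makes $K_u(T)$ finite and the energy pairings rigorous. Finiteness of $K_u(T)$ follows because $\nabla u\in L^\infty$ on the compact smooth horizon $[0,T]$ once $u$ extends. The pairing and decay issues I would not redo: they are covered by the standing hypothesis that the relative-energy calculation for complete histories is justified.
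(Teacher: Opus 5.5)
Your proposal is correct and follows the paper's proof essentially step for step: the error equation with $(V\cdot\nabla)w+(w\cdot\nabla)u$, the symmetric-strain lower bound, the $\dot H^{-1}$--$\dot H^1$ Young absorption, Gronwall using $\kappa_u\ge0$, and $\Qop u(t)=0$. One small correction to your justification: $\Qop$ is a contraction because $\Pi_0$, being an average of unitary rotations, is an \emph{orthogonal projection}, so $\Qop=I-\Pi_0$ is the complementary orthogonal projection; the fact that $\Pi_0$ is a contraction would not suffice by itself, since $I-P$ need not be contractive for a general contraction $P$.
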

\begin{proof}
Let $e=V-u$.  Then
\[
 \partial_te-\nu\Delta e+(V\cdot\nabla)e+(e\cdot\nabla)u+\nabla q
 =\Rres[V].
\]
Taking the $L^2$ pairing with $e$ removes the pressure and transport terms.
The symmetric strain satisfies
\[
 \int e^T(\operatorname{Sym}\nabla u)e\,dx
 \ge-\kappa_u\|e\|_2^2.
\]
By $\dot H^{-1}$--$\dot H^1$ duality and Young's inequality,
\[
 |\langle\Rres[V],e\rangle|
 \le\frac\nu2\|\nabla e\|_2^2
 +\frac1{2\nu}\|\Rres[V]\|_{\dot H^{-1}}^2.
\]
Therefore
\[
 \frac d{dt}\|e\|_2^2
 \le2\kappa_u\|e\|_2^2
 +\nu^{-1}\|\Rres[V]\|_{\dot H^{-1}}^2.
\]
Gronwall gives
\[
 \|e(t)\|_2^2
 \le e^{2K_u(T)}\left(
 \|e(0)\|_2^2
 +\nu^{-1}\|\Rres[V]\|_{L^2_t\dot H^{-1}}^2\right).
\]
Since $\Qop u(t)=0$, $\|\Qop V(t)\|_2\le\|e(t)\|_2$.
\end{proof}

\begin{proof}[Proof of \cref{thm:mixed-prefix-intro}]
Assume instead that the exact strong solution $u$ from $u_0$ extends
smoothly through $T$.  For a complete history $V_n$, put $e=V_n-u$.
With $\kappa_u$ defined in \cref{lem:relative}, the relative-energy
calculation with the split residual gives
\[
 \frac12\frac d{dt}\|e\|_2^2+\frac\nu2\|\nabla e\|_2^2
 \le\kappa_u(t)\|e\|_2^2
  +\|R_{a,n}\|_2\|e\|_2
  +\frac1{2\nu}\|R_{b,n}\|_{\dot H^{-1}}^2.
\]
Since $\int_0^T\kappa_u<\infty$, an integrating factor and a
quadratic inequality for $\sup_{t\le t_n}\|e(t)\|_2$ give
\begin{equation}\label{eq:mixed-stability}
 \sup_{0\le t\le t_n}\|V_n(t)-u(t)\|_2
 \le C_{u,T}\varepsilon_n.
\end{equation}
The fixed compact detector rescales as a bounded linear functional on
$L^2$ with norm $\|\Phi\|_2/(\nu\sqrt{\ell_n})$:
\[
 \left|\frac{\ell_n}{\nu}\int e(t_n,x_n+\ell_n y)
                   \cdot\Phi(y)\,dy\right|
 \le\frac{\|e(t_n)\|_2\|\Phi\|_2}{\nu\sqrt{\ell_n}}=o(1).
\]
On the other hand, the smooth reference field satisfies
\[
 \left|\frac{\ell_n}{\nu}\int u(t_n,x_n+\ell_n y)
                   \cdot\Phi(y)\,dy\right|
 \le\frac{\ell_n}{\nu}\|\Phi\|_1
             \sup_{[0,T]}\|u(t)\|_\infty=o(1).
\]
This contradicts the fixed lower bound in
\eqref{eq:mixed-prefix-intro}.  The statement therefore follows
without requiring an exact geometric return or a source-born
successor.  The energy-class requirements in the theorem justify all
integration by parts and cutoff limits; no uniform higher-Sobolev bound on
$V_n$ is needed.
\end{proof}

\begin{proposition}\label{prop:finite-event-cost}
Fix $\nu>0$.  Suppose one smooth compactly supported axisymmetric
pure-swirl datum $w_0$ has a smooth, unforced, whole-space solution $w$
on $[0,s_2]$ and there are $0<s_1<s_2$ and $\gamma>0$ such that
\begin{equation}\label{eq:seed-positive-net}
 N_0^w(s)=\frac14\log\frac{V_4[w(s)]}{V_4[w_0]}
 \ge2\gamma \qquad (s_1\le s\le s_2).
\end{equation}
Let $E_w=\tfrac12\|w_0\|_2^2>0$ and
$\Gamma_w=\|r w_0^\theta\|_\infty>0$.  For each prescribed
$E_*>0$ there is a smooth compact pure-swirl datum $\bar u_0$ with
kinetic energy $E_*$ and a family of smooth compact pure-swirl data
$u_{0,\lambda}$ with exactly the same energy $E_*$, circulation maximum
$\Gamma_w$, and viscosity $\nu$, such that their \emph{exact}
solutions satisfy, for all sufficiently large $\lambda$,
\begin{equation}\label{eq:finite-event-cost}
\begin{gathered}
 N_0^{u_\lambda}(s/\lambda^2)\ge\gamma
 \quad (s_1\le s\le s_2),\qquad
 \nu\int_0^{s_2/\lambda^2}\|\nabla u_\lambda\|_2^2dt
 \le C\lambda^{-1},\\
 \|u_{0,\lambda}-\bar u_0\|_2
 =\sqrt{2E_w}\lambda^{-1/2}(1+o(1)).
\end{gathered}
\end{equation}
Thus a single positive source-funded, viscosity-compensated event need not
consume a fixed positive \emph{unnormalized} energy or dissipation amount
at fixed initial energy and circulation.  This conclusion is conditional
on \eqref{eq:seed-positive-net}; it gives neither recurrent events on
one trajectory nor a singular C2 family.
\end{proposition}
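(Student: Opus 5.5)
The proof will be a scaling-plus-superposition construction. The whole-space Navier--Stokes symmetry
\[
 u_\lambda(x,t)=\lambda w(\lambda x,\lambda^2t)
\]
preserves axisymmetry, the pure-swirl form, the viscosity $\nu$, and the circulation maximum, since $r\lambda w^\theta(\lambda r)=(\lambda r)w^\theta(\lambda r)$. It rescales the energy by $\lambda^{-1}$. Under this scaling the quartic functional $V_4=\int|u^\theta|^4\,r\,dr\,dz$ becomes $\lambda^{4}\lambda^{-3}V_4[w(\lambda^2t)]=\lambda V_4[w(\lambda^2 t)]$. The ratio $V_4[u_\lambda(t)]/V_4[u_\lambda(0)]$ therefore equals $V_4[w(\lambda^2t)]/V_4[w_0]$, so $N_0^{w_\lambda}(s/\lambda^2)=N_0^w(s)\ge2\gamma$. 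The rescaled dissipation is
\[
 \nu\int_0^{s_2/\lambda^2}\|\nabla w_\lambda\|_2^2
 =\lambda^{-1}\nu\int_0^{s_2}\|\nabla w\|_2^2\le\lambda^{-1}E_w .
\]

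To restore the prescribed energy $E_*$, fix a smooth compact pure-swirl background $\bar u_0$ supported far from the origin, with $\|r\bar u_0^\theta\|_\infty\le\Gamma_w$, and set
\[
 u_{0,\lambda}=\bar u_0(\cdot-\text{shift})\cdot c_\lambda+w_\lambda(0).
\]
Here $c_\lambda=1+O(\lambda^{-1})$ is chosen so that the energy is exactly $E_*$; this is possible because the supports are disjoint and the energies add. A pure swirl translated along the $z$-axis is still axisymmetric. Because $w_\lambda(0)$ is supported in a ball of radius $O(\lambda^{-1})$ around the origin, the two pieces have disjoint supports for large $\lambda$. The circulation maximum is then $\max(c_\lambda\Gamma_{\bar u},\Gamma_w)=\Gamma_w$ once $\bar u_0$ has circulation strictly below $\Gamma_w$. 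The $L^2$ distance is $\|w_\lambda(0)\|_2+|c_\lambda-1|\,\|\bar u_0\|_2$. The first term equals $\sqrt{2E_w}\lambda^{-1/2}$, and the second is $O(\lambda^{-1})=o(\lambda^{-1/2})$, which gives the third line of \eqref{eq:finite-event-cost}.

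The main obstacle is controlling the \emph{exact} solution $u_\lambda$ of the superposed datum, since it is no longer the sum of the two solutions. I would write $u_\lambda=w_\lambda+\bar u+e$, where $\bar u$ is the global solution from $c_\lambda\bar u_0$ (smooth on the short interval $[0,s_2/\lambda^2]$). I would then estimate $e$ in the rescaled frame $x\mapsto\lambda x$, where the interval has fixed length $s_2$. In that frame the background becomes $\lambda^{-1}\bar u(x/\lambda,t/\lambda^2)$, which is small in $W^{1,\infty}$ near the origin. Its $L^\infty$ size is $O(\lambda^{-1})$ and its gradient is $O(\lambda^{-2})$, and because the supports are initially separated, its field near the origin is even smaller. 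The rescaled error $\tilde e$ solves a perturbed Navier--Stokes equation with zero data and forcing of the form $\operatorname{div}(\text{cross terms})$. The cross terms are small in $L^2_t\dot H^{-1}$, and $w$ is smooth on $[0,s_2]$. A Gronwall estimate in $H^m$ with $m\ge3$ (stability of smooth solutions, as in \cref{lem:relative} but at higher regularity) should therefore give $\tilde e\to0$ in $H^m$ uniformly on $[0,s_2]$.

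It remains to pass from this error bound to the quartic ratio. Both $V_4[u_\lambda(0)]$ and $V_4[u_\lambda(t)]$ split into the rescaled $w$-part, of size $\lambda V_4[w]$, plus the background contribution $V_4[\bar u]=O(1)$, plus errors. So $V_4$ for the superposition is dominated by the concentrated piece for large $\lambda$. The ratio then converges to $V_4[w(s)]/V_4[w_0]$ uniformly for $s\in[s_1,s_2]$, and the strict margin between $2\gamma$ and $\gamma$ absorbs the error. The dissipation bound follows from the energy identity: the dissipation equals the energy drop over the short interval, namely $\lambda^{-1}$ times the drop for $w$, plus the background's dissipation over a time $O(\lambda^{-2})$, plus the error; all three are $O(\lambda^{-1})$. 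The uniform smallness of $\tilde e$ across the two scales is the step I expect to require the most care.
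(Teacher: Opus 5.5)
Your proposal is correct. The construction is the paper's: a rescaled seed plus a distant spectator normalized by $c_\lambda=(1-E_w/(\lambda E_*))^{1/2}$, disjoint supports giving exact energy and circulation, and the scaling identity $V_4[u_\lambda(s/\lambda^2)]=\lambda V_4[\tilde u_\lambda(s)]$, which reduces the quartic ratio to $L^4$ convergence in the rescaled frame. The genuine difference is the stability step.

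The paper compares $\tilde u_\lambda$ with $w$ alone. It treats the rescaled spectator $r_\lambda(y)=c_\lambda\lambda^{-1}\bar u_0(y/\lambda)$ as an initial perturbation that is small in $L^4\cap L^\infty$, of sizes $\lambda^{-1/4}$ and $\lambda^{-1}$. It then closes a mild Volterra bootstrap using $\|e^{\nu t\Delta}\Pp\nabla\cdot A\|_p\lesssim(\nu t)^{-1/2}\|A\|_p$. It must avoid $L^2$ there because $\|r_\lambda\|_2\sim\lambda^{1/2}$.

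You instead also subtract the evolved spectator. Then $\tilde e$ has zero data, and every forcing term carries a factor $\tilde{\bar u}$, which is $O(\lambda^{-1})$ in every $W^{k,\infty}$ norm. This is exactly what makes an $L^2$-based $H^m$ Gronwall argument work; comparing with $w$ alone in $H^m$ would fail for the same $\lambda^{1/2}$ reason. The smallness you need is in $L^2_tH^{m-1}$, not only $\dot H^{-1}$, but it holds for the same reason. The price is $\lambda$-uniform $W^{m+1,\infty}$ bounds for the spectator's own solution on $[0,s_2/\lambda^2]$. That is local smoothness only, so drop the word ``global''.

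Your route gives stronger ($H^m$) control, and then yields the dissipation bound by scaling the three pieces. The error piece contributes $\nu\lambda^{-1}\int_0^{s_2}\|\nabla\tilde e\|_2^2\,ds=o(\lambda^{-1})$. Use the triangle inequality on gradients here, since the energy drop is not literally additive across the pieces. The paper gets the dissipation more cheaply, from a relative-energy estimate against $\bar u$ in the original frame that is independent of the perturbation argument.

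Three small points:
\begin{itemize}
\item Say why $N_0^{u_\lambda}$ equals the log-ratio: pure-swirl data have $G(0)=0$, so the old-current tag vanishes and the quartic balance applies.
\item By disjointness the $L^2$ distance is the exact identity $2E_w/\lambda+2E_*(c_\lambda-1)^2$, not a sum of norms.
\item A spectator with energy exactly $E_*$ and circulation below $\Gamma_w$ exists by stretching its axial support.
\end{itemize}
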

\begin{proof}
Choose a smooth compact pure-swirl spectator $\bar u_0$ supported in a
distant annulus, with kinetic energy $E_*$ and circulation maximum at most
$\Gamma_w/4$.  It can carry the prescribed energy while keeping that
maximum small by extending its smooth axial support.  For large $\lambda$
its support is disjoint from that of $\lambda w_0(\lambda x)$.  Set
\[
 c_\lambda=\left(1-\frac{E_w}{\lambda E_*}\right)^{1/2},
 \qquad
 u_{0,\lambda}(x)=\lambda w_0(\lambda x)+c_\lambda\bar u_0(x).
\]
Physical disjointness and the exact Navier--Stokes scaling give
$E[u_{0,\lambda}]=E_*$ and the common circulation maximum
$\Gamma_w$.  They also give
$\|u_{0,\lambda}-\bar u_0\|_2^2
 =2E_w/\lambda+2E_*(c_\lambda-1)^2$.

Rescale the \emph{actual} solution by
$\widetilde u_\lambda(s,y)=\lambda^{-1}
 u_\lambda(s/\lambda^2,y/\lambda)$.
Its initial value is $w_0+r_\lambda$, where
$r_\lambda(y)=c_\lambda\lambda^{-1}\bar u_0(y/\lambda)$ and
\[
 \|r_\lambda\|_\infty=O(\lambda^{-1}),
 \qquad \|r_\lambda\|_4=O(\lambda^{-1/4}).
\]
The full Leray heat kernel satisfies
$\|e^{\nu t\Delta}\Pp\nabla\cdot A\|_p
 \le C(\nu t)^{-1/2}\|A\|_p$
for $p=4,\infty$: its kernel is a first Gaussian derivative plus the
third derivative of the integrated Gaussian heat kernel, and both have
$L^1$ norm bounded by $C(\nu t)^{-1/2}$.  The mild difference equation
about the fixed smooth $w$ therefore admits a finite-interval Volterra
bootstrap, giving
\[
 \sup_{0\le s\le s_2}
 (\|\widetilde u_\lambda(s)-w(s)\|_\infty
 +\|\widetilde u_\lambda(s)-w(s)\|_4)
 \longrightarrow0.
\]
In particular these are exact smooth solutions through $s_2$ for large
$\lambda$; no independently evolved velocity fields are superposed.
Since all initial data are pure swirl, their homogeneous old-current tags
vanish and the exact quartic balance gives
$N_0^{u_\lambda}=\tfrac14\log(V_4(t)/V_4(0))$.
Scaling yields
$V_4[u_\lambda(s/\lambda^2)]=\lambda
 V_4[\widetilde u_\lambda(s)]$.
Continuity in $L^4$ transfers the strict margin in
\eqref{eq:seed-positive-net} to the first inequality in
\eqref{eq:finite-event-cost}.

Finally let $\bar u(t)$ be the smooth solution from $\bar u_0$ and
$e=u_\lambda-\bar u$ on $[0,s_2/\lambda^2]$.  Relative energy gives
\[
 \frac12\frac d{dt}\|e\|_2^2
 +\nu\|\nabla e\|_2^2
 \le\|\nabla\bar u\|_\infty\|e\|_2^2.
\]
The initial squared error is $O(\lambda^{-1})$, so Gronwall bounds its
energy and integrated dissipation by $C\lambda^{-1}$.  The fixed
spectator's dissipation over this shrinking interval is
$O(\lambda^{-2})$; the second inequality follows by the triangle
inequality for gradients.  The limit datum $\bar u_0$ is smooth but
$\|u_{0,\lambda}-\bar u_0\|_2$ is of order
$\sqrt{\ell_\lambda}$ for $\ell_\lambda=\lambda^{-1}$,
not $o(\nu\sqrt{\ell_\lambda})$, and the observation times tend to
zero.  The construction therefore does not satisfy
\cref{thm:mixed-prefix-intro}.
\end{proof}

\begin{theorem}\label{thm:complete-history}
Fix nonzero divergence-free axisymmetric $u_0\in H^3(\mathbb R^3)$ and let
$u$ be its unique strong solution, smooth on compact subintervals of $[0,T)$.
Then
\[
 \boxed{
 u\text{ fails to continue through }T
 \iff
 \exists\ t_N\uparrow T,\ V_N:
 \ \cE[V_N;u_0]\to0,
 \ \cA[V_N;u_0]\to\infty.}
\]
\end{theorem}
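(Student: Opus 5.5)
The two directions are handled separately: \cref{lem:relative} gives the necessity of regularity for the bound $\cA\le e^{K_u(T)}$, and an explicit family of nearly exact, strongly non-axisymmetric histories shows that blow-up lets $\cA$ diverge.

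\emph{($\Leftarrow$).} Suppose $t_N\uparrow T$ and histories $V_N$ on $[0,t_N]$ satisfy $\cE[V_N;u_0]\to0$ and $\cA[V_N;u_0]\to\infty$. If $u$ continued smoothly through $T$, then $\operatorname{Sym}\nabla u$ would be bounded on $[0,T]$. Hence $K_u(T)<\infty$, and \cref{lem:relative} would give $\cA[V_N;u_0]\le e^{K_u(T)}$ for every $N$, which is a contradiction. This direction uses only the relative-energy lemma. The condition $\cE\to0$ is not even needed here; it only enters the other direction, where it makes the statement nontrivial.

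\emph{($\Rightarrow$).} Now suppose $u$ does not continue, so $\|\nabla u(t)\|_\infty$, and in fact $\|u(t)\|_{H^3}$, is unbounded as $t\uparrow T$. The plan is to perturb the exact solution by a small non-axisymmetric field whose growth is driven by the unbounded strain. Take $V_N=u+\delta_N\phi_N$ on $[0,t_N]$ with $V_N(0)=u_0+\delta_N\phi_N(0)$. Here $\phi_N$ solves the linearized equation
\[
 \partial_t\phi-\nu\Delta\phi+\Pp\big((u\cdot\nabla)\phi+(\phi\cdot\nabla)u\big)=0,
\]
with initial datum $\phi_N(0)$ satisfying $\Qop\phi_N(0)=\phi_N(0)$ and $\|\phi_N(0)\|_2=1$. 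The linearization commutes with rotations because $u$ is axisymmetric, so the $\Qop$-part of $\phi_N$ evolves autonomously and $\Qop\phi_N(t)=\phi_N(t)$. The residual is then exactly the quadratic term $\delta_N^2\Pp((\phi_N\cdot\nabla)\phi_N)$. Since $\Qop u=0$, we get $\|\Qop V_N(t_N)\|_2=\delta_N\|\phi_N(t_N)\|_2$. Moreover
\[
 \cE^2\le\delta_N^2+\nu^{-1}\delta_N^4\|\phi_N\otimes\phi_N\|_{L^2(0,t_N;L^2)}^2,
\]
so
\[
 \cA\gtrsim\frac{\|\phi_N(t_N)\|_2}{1+\nu^{-1/2}\delta_N\|\phi_N\|_{L^4_{t,x}}^2}.
\]
Choosing $\delta_N$ small enough, depending on the fixed smooth prefix $[0,t_N]$, makes both $\cE\to0$ and the denominator at most $2$. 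The problem therefore reduces to the linear amplification claim
\[
 \sup_{t<T}\ \sup_{\Qop\phi(0)=\phi(0),\ \|\phi(0)\|_2=1}\|\phi(t)\|_2=\infty .
\]

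This amplification is the main obstacle. To prove it I would argue by contradiction. Suppose the non-axisymmetric linearized propagator stayed bounded on $L^2$ up to $T$. Differentiating the axisymmetric solution in a horizontal direction, $\partial_1u$ solves the full linearized equation. Its $\Qop$-component is nonzero, because $\partial_1$ maps the axisymmetric class into angular modes $\pm1$, and in fact $\Qop\partial_1u=\partial_1u$. So $\|\partial_1u(t)\|_2\le C\|\partial_1u_0\|_2$ for $t<T$. \Cref{prop:horizontal} then gives continuation through $T$, contradicting the blow-up. Hence the propagator norm is unbounded, and one may choose the data $\phi_N(0)$, for instance normalized copies of $\partial_1u_0$ or near-maximizers, so that $\|\phi_N(t_N)\|_2\to\infty$.

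It remains to check smoothness and the regularity class. The solution $u$ is smooth on $[0,t_N]$, so $\phi_N$ is smooth and finite-energy there, and $V_N$ is a legitimate smooth divergence-free history with all $\dot H^{-1}$ pairings justified. Finally, $\delta_N\to0$ is chosen fast enough relative to $\|\phi_N\|_{L^4([0,t_N]\times\R^3)}$ so that $\cE[V_N;u_0]\to0$ and $\cA[V_N;u_0]\ge\tfrac12\|\phi_N(t_N)\|_2\to\infty$.
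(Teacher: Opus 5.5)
Your proof is correct and uses the same two ingredients as the paper. For the direction ``regular $\Rightarrow$ $\cA$ bounded'' you use \cref{lem:relative}. For the blow-up direction you use \cref{prop:horizontal} together with $\Qop\partial_1u=\partial_1u$.

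The only difference is the witness history. The paper takes the exact translates $V_N=u(\cdot+h_Ne_1)$, whose residual vanishes identically. So $\cE$ reduces to $\|u_0(\cdot+h_Ne_1)-u_0\|_2\le h_N\|\partial_1u_0\|_2$, and one only needs $L^2$ difference-quotient convergence at $t=0$ and $t=t_N$. You take the tangent vector to that translation orbit, $V_N=u+\delta_N\partial_1u/\|\partial_1u_0\|_2$. This is not an exact solution, so you must pay for the quadratic residual $\delta_N^2\Pp\nabla\cdot(\phi\otimes\phi)$ through an $L^4_{t,x}$ bound on each prefix. That cost is harmless because $\delta_N$ is chosen after $t_N$.

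Your detour through unboundedness of the linearized propagator is unnecessary. $\partial_1u$ is itself an explicit solution of the linearization, and \cref{prop:horizontal} directly supplies $t_N\uparrow T$ with $\|\partial_1u(t_N)\|_2\to\infty$. Note that $\partial_1u_0\ne0$ because $u_0\ne0$ in $L^2$; this is where the nonzero hypothesis enters.
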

\begin{proof}
The forward implication of regularity is \cref{lem:relative}.  Conversely,
if continuation fails, \cref{prop:horizontal} gives $t_N\uparrow T$ with
\[
 \|\partial_1u(t_N)\|_2\to\infty.
\]
For $h\ne0$ set $V_{N,h}(s,x)=u(s,x+he_1)$.  Translation invariance gives
$\Rres[V_{N,h}]=0$.  Moreover,
\[
 \frac{u_0(\cdot+he_1)-u_0}{h}\to\partial_1u_0
 \quad\text{in }L^2,
\]
and, because $u(t_N)$ is axisymmetric,
\[
 \Qop\frac{u(t_N,\cdot+he_1)-u(t_N)}h
 \to\partial_1u(t_N)
 \quad\text{in }L^2.
\]
Choose $h_N\downarrow0$ so both difference quotients are within $1$ of their
limits.  Then
\[
 \cE[V_{N,h_N};u_0]\le h_N(\|\partial_1u_0\|_2+1)\to0,
\]
while
\[
 \cA[V_{N,h_N};u_0]
 \ge\frac{\|\partial_1u(t_N)\|_2-1}{\|\partial_1u_0\|_2+1}\to\infty.
\]
\end{proof}

\begin{proposition}\label{prop:translation-test}
Let $v_N$ be complete smooth axisymmetric divergence-free histories on
$[0,t_N]$ and set
\[
 d_N^2=\|v_N(0)-u_0\|_2^2
 +\nu^{-1}\|\Rres[v_N]\|_{L^2(0,t_N;\dot H^{-1})}^2,
\]
\[
 W_N=\|\partial_1v_N(t_N)\|_2,
 \qquad
 Z_N=\|\partial_{11}v_N(t_N)\|_2,
 \qquad
 A_0=\|\partial_1u_0\|_2.
\]
If $W_N,Z_N>0$ and $h_N=W_N/Z_N$, then
\[
 \cA[V_N;u_0]
 \ge\frac1{2(A_0/W_N+d_NZ_N/W_N^2)},
\]
where $V_N(t,x)=v_N(t,x+h_Ne_1)$.  Thus
\[
 W_N\to\infty,
 \qquad
 \frac{d_NZ_N}{W_N^2}\to0
\]
imply noncontinuation when $t_N\uparrow T$.
\end{proposition}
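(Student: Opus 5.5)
The plan is to estimate the numerator and the denominator of $\cA[V_N;u_0]$ separately for the translated history $V_N=v_N(\cdot+h_Ne_1)$, with $h=h_N=W_N/Z_N$. Noncontinuation will then follow from \cref{lem:relative}.

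\emph{Denominator.} Translation commutes with $\partial_t$, with $\Delta$, with the Leray projector and with the quadratic term. Hence $\Rres[V_N]=\Rres[v_N](\cdot+he_1)$, and since the $\dot H^{-1}$ norm is translation invariant the residual part of $\cE$ is unchanged. For the initial error I would write
\[
 \|V_N(0)-u_0\|_2\le\|v_N(0)-u_0\|_2+\|u_0(\cdot+he_1)-u_0\|_2\le\|v_N(0)-u_0\|_2+hA_0 ,
\]
using $u_0(\cdot+he_1)-u_0=\int_0^h\partial_1u_0(\cdot+\theta e_1)\,d\theta$. Minkowski's inequality in $\R^2$ then gives $\cE[V_N;u_0]\le d_N+hA_0$.

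\emph{Numerator.} The field $v_N(t_N)$ is axisymmetric, so $\Qop v_N(t_N)=0$ and
\[
 \Qop V_N(t_N)=\Qop\bigl(v_N(t_N,\cdot+he_1)-v_N(t_N)\bigr).
\]
Taylor's formula with integral remainder gives
\[
 v_N(t_N,\cdot+he_1)-v_N(t_N)=h\,\partial_1v_N(t_N)+\rho,\qquad \|\rho\|_2\le\tfrac{h^2}{2}Z_N .
\]
The key structural fact is that $\Qop\partial_1v=\partial_1v$ for axisymmetric $v$. To see this, note that the rotation average of $\partial_1v$ equals the average over $\alpha$ of the directional derivatives of $v$ along $R_\alpha e_1$, because each rotated copy of $v$ is $v$ itself. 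This average vanishes because $\int_0^{2\pi}R_\alpha e_1\,d\alpha=0$. So $\Pi_0\partial_1v=0$. Since $\Qop$ is an orthogonal projection, $\|\Qop\rho\|_2\le\|\rho\|_2$, and therefore
\[
 \|\Qop V_N(t_N)\|_2\ge hW_N-\tfrac{h^2}{2}Z_N=\frac{W_N^2}{2Z_N}
\]
with the choice $h=W_N/Z_N$.

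\emph{Conclusion.} Dividing the two estimates gives
\[
 \cA[V_N;u_0]\ge\frac{W_N^2/(2Z_N)}{d_N+A_0W_N/Z_N}=\frac{1}{2\bigl(A_0/W_N+d_NZ_N/W_N^2\bigr)} .
\]
Under $W_N\to\infty$ and $d_NZ_N/W_N^2\to0$ the right-hand side tends to $\infty$. If the solution continued through $T$, \cref{lem:relative} would force $\cA\le e^{K_u(T)}$ for every complete history on $[0,t_N]$ with $t_N<T$, which is a contradiction.

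The only genuinely delicate step is the identity $\Pi_0\partial_1v=0$ for vector-valued axisymmetric fields. The group action $\mathcal T_\alpha$ rotates both the argument and the vector components, so one must verify that the averaged directional derivative is really what appears. Everything else is a triangle-inequality bookkeeping.
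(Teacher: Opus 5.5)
Your proof is correct and matches the paper's argument essentially step for step: Taylor expansion of the translate together with $\Qop v_N(t_N)=0$ for the numerator, and translation invariance of the residual, $\|u_0(\cdot+he_1)-u_0\|_2\le hA_0$ and Minkowski for the denominator, followed by division with $h_N=W_N/Z_N$. Your explicit check that $\Pi_0\partial_1 v=0$ for axisymmetric $v$, so that $\|\Qop\partial_1 v_N(t_N)\|_2=W_N$, supplies a step the paper uses without comment, and your appeal to \cref{lem:relative} is how the noncontinuation conclusion is meant to follow.
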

\begin{proof}
Since $v_N$ is axisymmetric, $\Qop v_N(t_N)=0$.  Taylor's formula in $L^2$
gives
\[
 \tau_{h_N}v_N-v_N
 =h_N\partial_1v_N
 +\int_0^{h_N}(h_N-s)\partial_{11}v_N(\cdot+se_1)\,ds.
\]
Therefore
\[
 \|\Qop V_N(t_N)\|_2
 \ge h_NW_N-\frac{h_N^2}{2}Z_N
 =\frac{W_N^2}{2Z_N}.
\]
Translation equivariance preserves the residual norm, and
\[
 \|\tau_{h_N}u_0-u_0\|_2\le h_NA_0.
\]
Minkowski's inequality in the direct sum defining $\cE$ gives
\[
 \cE[V_N;u_0]\le d_N+h_NA_0.
\]
Divide and use $h_N=W_N/Z_N$.
\end{proof}

\subsection*{Full-state renewal}
For a pure-swirl entrance, the equations give
\[
 G(0)=0,
 \quad
 \partial_z(F_0^2)\not\equiv0
 \quad\Longrightarrow\quad
 G_t(0)=\partial_z(F_0^2)\not\equiv0,
\]
\[
 U=-\partial_z(-\Delta_5)^{-1}G,
\]
\[
 \mathrm{Pe}\ge2\ \text{at isolated points}
 \not\Longrightarrow
 \text{persistent material or full-state return}.
\]
Thus a successor must inherit the generated meridional current and the
nonlocal exterior state; resetting to pure swirl changes the solution.

\subsection*{Finite-prefix criterion}
The complete-history criterion gives
\[
 W_N\to\infty,
 \qquad
 \frac{d_NZ_N}{W_N^2}\to0
 \stackrel{\cref{prop:translation-test}}{\Longrightarrow}
 \text{noncontinuation},
\]
and
\[
 (T-t_N)\|J_H[u_N](t_N)\|_{L^\infty(B_\rho)}\ge\delta
 \stackrel{\cref{cor:hodge-prefix}}{\Longrightarrow}
 \text{noncontinuation of the limit solution}.
\]
Thus exact self-copy is unnecessary.

Let $\ell_n=\ell_0\theta^n$, $0<\theta<1$, and assume on stage $n$
\[
 \|u(t)\|_\infty\le K\frac\nu{\ell_n},
 \qquad
 \Delta t_n\le C\frac{\ell_n^2}{\nu}.
\]
Then
\[
 T-t_n\le\sum_{m\ge n}\Delta t_m
 \le\frac{C\ell_n^2}{\nu(1-\theta^2)},
\]
so
\[
 \sqrt{\frac{T-t}{\nu}}\,\|u(t)\|_\infty
 \le K\sqrt{\frac{C}{1-\theta^2}}.
\]
Such a geometric return is Type I.  A singular construction must leave this
bounded-state, bounded-parabolic-clock regime.

\medskip\noindent\emph{Energy error and receiver fidelity.}
The $L^2$ error in \cref{thm:complete-history} tests the final
symmetry-breaking witness through the relative-energy argument; it does
not control the quartic receiver $V_4$ on intermediate states.  To see the distinction, choose nonzero
$\varphi\in C_c^\infty((1,2)\times(-1,1))$ and let
\[
 e_\ell(r,z)=\frac\nu\ell\varphi(r/\ell,z/\ell)e_\theta,
 \qquad 0<\ell<\tfrac12.
\]
These are smooth, compactly supported divergence-free fields with
$\|r e_\ell^\theta\|_\infty\le C\nu$, and scaling gives
$\|e_\ell\|_2^2=c_2\nu^2\ell$ and
$V_4(e_\ell)=c_4\nu^4/\ell$, where $c_2,c_4>0$ are fixed.
For $v_\ell=e_\ell+e_{\ell^2}$ the radial supports are disjoint, so
\[
 \frac{\|v_\ell-e_\ell\|_2}{\|e_\ell\|_2}
 =\sqrt\ell\longrightarrow0,
 \qquad
 \frac{V_4(v_\ell)}{V_4(e_\ell)}
 =1+\ell^{-1}\longrightarrow\infty.
\]
This is a comparison of admissible smooth states, not a Navier--Stokes
trajectory.  An iterative construction must separately control its
quartic receiver or prove downstream stability that prevents hidden
subscale packets; final $L^2$ accuracy alone is insufficient.

\section{Necessary conditions for a first singular time}\label{sec:necessary}
The following statements are contrapositives of the continuation
criteria. Those involving $\Gamma_*$ are asserted only when
$0<\Gamma_*<\infty$; the signed source condition retains its separate
strong-data hypotheses.

\begin{theorem}\label{thm:necessary-blowup}
Let $T<\infty$ be a first singular time and $a<T$ a fixed regular cut.
With the notation of \cref{thm:cumulative-compression-intro,thm:cumulative-M-intro,thm:quartic-dose-intro,thm:source-inflow-intro},
\begin{align}
 \limsup_{t\uparrow T}\frac{\int_a^t\sigma(q)\,dq}{s(t)}
 &\ge \frac12+\frac{\nu^2}{6\Gamma_*^2},\label{eq:nec-compression}\\
 \limsup_{t\uparrow T}\frac{\nu^{-1}\int_a^tM(q)^2\,dq}{s(t)}
 &\ge\frac16,\label{eq:nec-angular}\\
 \limsup_{t\uparrow T}\frac{\nu^{-7}\int_a^tV_4(q)^2\,dq}{s(t)}
 &\ge\frac{3}{2C_4}.\label{eq:nec-quartic}
\end{align}
For every fixed $R>0$,
\begin{equation}\label{eq:nec-source}
 \sup_{I\subset[a,T)}
 \frac{\int_I g_R^{\rm src}(t)\,dt}{\sqrt{\nu|I|}}=\infty.
\end{equation}
For the signed quartic receiver defined in \eqref{eq:F4},
\begin{equation}\label{eq:nec-F4}
 \limsup_{S\to\infty}\frac1S\int_0^S[\mathscr F_4(\xi)]_+\,d\xi
 \ge\frac18.
\end{equation}
The free first current obeys \eqref{eq:first-current-bound}; it cannot by
itself supply a super-elapsed-Type-I contribution.
\end{theorem}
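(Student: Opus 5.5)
The plan is to prove each item as the contrapositive of the corresponding continuation criterion already established, with $T$ the first singular time and $a<T$ a fixed regular cut. Since the solution does not continue smoothly through $T$, every sufficient condition proved above must fail. Where a criterion requires $0<\Gamma_*<\infty$, the corresponding necessary inequality is asserted only under that hypothesis, as in the preamble of \cref{sec:necessary}.

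The compression, angular and quartic items follow directly from strict-limsup hypotheses.
\begin{itemize}
\item \Cref{thm:cumulative-compression-intro} shows that if the limsup of $\int_a^t\sigma/s(t)$ were $<c_*=\frac12+\frac{\nu^2}{6\Gamma_*^2}$, the solution would continue. Hence singularity forces \eqref{eq:nec-compression}.
\item In the same way, \cref{thm:cumulative-M-intro} gives \eqref{eq:nec-angular}.
\item \Cref{thm:quartic-dose-intro} gives \eqref{eq:nec-quartic}, with the same constant $C_4$ from \eqref{eq:C4-def}.
\end{itemize}
In each case the negation of ``$\limsup<c$'' is exactly ``$\limsup\ge c$''. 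The choice of cut $a$ is irrelevant, because changing $a$ alters $\int_a^t$ by a finite constant and $s(t)$ by an additive constant, both negligible as $s\to\infty$.

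The receiver item \eqref{eq:nec-F4} is the negation of \eqref{eq:F4criterion} in \cref{thm:q4receiver}, which needs no circulation bound.

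For the source-inflow item, fix $R>0$ and suppose \eqref{eq:nec-source} fails. Then the supremum is some finite number $A$, so $\int_I g_R^{\rm src}\le A\sqrt{\nu|I|}$ for every interval $I\subset[a,T)$. This is exactly hypothesis \eqref{eq:intro-source-inflow} of \cref{thm:source-inflow-intro}, which would then give continuation, a contradiction.

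The final sentence of the theorem is not a contrapositive. It restates \cref{thm:first-current-intro}: the bound \eqref{eq:first-current-bound} holds after every regular cut. By \cref{rem:two-clocks}, the free response $B_2^r$ is bounded by $C(t-a)^{-1/2}$, which is the elapsed Type-I scale. So any radial current exceeding that scale must come from the remainder \eqref{eq:recent-remainder} or the inherited field.

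I expect the only real obstacle to be bookkeeping rather than analysis. The hypotheses of each cited criterion must be satisfied by a first-singular-time solution: smoothness on $[a,T)$, existence of derivative-realizing maximizers at almost every time for $\sigma$, and $V_4>0$, i.e.\ nonzero swirl, for $\mathscr F_4$. For the nonzero-swirl case I would argue that if $u^\theta(a)\equiv0$, the no-swirl regularity theorem excludes a singularity, so the swirl is nonzero at every regular cut. This makes every item meaningful.
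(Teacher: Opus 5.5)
Your proposal is correct and matches the paper's proof. Each of \eqref{eq:nec-compression}--\eqref{eq:nec-F4} is the contrapositive of the corresponding continuation criterion (\cref{thm:cumulative-compression-intro,thm:cumulative-M-intro,thm:quartic-dose-intro,thm:source-inflow-intro,thm:q4receiver}), and the final assertion is just \cref{thm:first-current-intro}; your extra bookkeeping (nonvanishing swirl at every regular time via no-swirl regularity, and independence of the cut $a$) is correct but goes beyond what the paper records.
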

\begin{proof}
If any of the three strict reverse inequalities in
\eqref{eq:nec-compression}--\eqref{eq:nec-quartic} held, the corresponding
continuation theorem would contradict the choice of $T$.
Finiteness in \eqref{eq:nec-source} for one fixed $R$ would give the
hypothesis of \cref{thm:source-inflow-intro}.  A strict reverse
inequality in \eqref{eq:nec-F4} would give continuation by
\cref{thm:q4receiver}.  The last assertion is
\cref{thm:first-current-intro}.
\end{proof}

\begin{corollary}\label{cor:signed-source-necessary}
Under the strong Sobolev hypotheses of
\cref{thm:signed-source-intro}, if $T$ is a first singular time then,
for every fixed regular cut $a<T$,
\[
 \limsup_{t\uparrow T}
 \frac{\displaystyle\int_a^t e^{8N_a(q)}\,dq}
      {\displaystyle\log\frac{T-a}{T-t}}>0,
 \qquad
 \int_a^T Z_{\rm src}(q)^2\,dq=\infty.
\]
No global circulation bound is required for this strong-class implication.
\end{corollary}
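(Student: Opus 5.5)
Both assertions are contrapositives, one of \cref{thm:signed-source-intro} and one of \cref{cor:sourced-enstrophy-intro}. I will argue each by contradiction, fixing a first singular time $T$ and a regular cut $a<T$ with $u^\theta(a)\not\equiv0$. The strong Sobolev hypotheses are exactly those under which both results were proved, and neither proof used a bound on $\Gamma_*$. For the case $u^\theta(a)\equiv0$, I note that by uniqueness the swirl then vanishes identically after $a$. The no-swirl regularity theorem would then contradict singularity, so this case does not occur.

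For the first limsup, suppose it vanished. Since $e^{8N_a}>0$, the ratio is nonnegative, so its limsup being zero means the ratio actually tends to zero. That is precisely \eqref{eq:signed-source-intro}, i.e. $\int_a^t e^{8N_a}\,dq=o(s(t))$. The sufficiency direction of \cref{thm:signed-source-intro} then yields smooth continuation through $T$, contradicting the choice of $T$. Hence the limsup is strictly positive, and it may be $+\infty$.

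For the second assertion, suppose $\int_a^T Z_{\rm src}^2\,dq<\infty$. Then \eqref{eq:sourced-enstrophy-intro} gives $N_a(B)\le C_{\rm s}\nu^{-3}\int_a^T Z_{\rm src}^2=:K<\infty$ uniformly in $B<T$. It follows that $\int_a^t e^{8N_a}\le e^{8K}(T-a)$ stays bounded, which is certainly $o(s(t))$ because $s(t)\to\infty$. So again \cref{thm:signed-source-intro} gives continuation, a contradiction. Equivalently, I can cite the sufficiency clause of \cref{cor:sourced-enstrophy-intro} directly.

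The only point requiring care is checking that the hypotheses of those results are in force at an arbitrary regular cut before a first singular time. The solution is smooth in $H^m$ on $[a,t]$ for every $t<T$, and $u^\theta(a)\not\equiv0$, so $V_4>0$ on $[a,T)$ by backward uniqueness/positivity of the swirl equation. With that in hand, the quantities $N_a$ and $Z_{\rm src}$ are well defined, and the argument uses only the already-established sufficiency directions.
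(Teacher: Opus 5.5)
Your proposal is correct and follows the paper's own proof, which simply notes that failure of either conclusion would activate the sufficiency direction of \cref{thm:signed-source-intro} or \cref{cor:sourced-enstrophy-intro}; your added details are sound, namely nonnegativity of the ratio turning $\limsup=0$ into $o(s)$, the bound $\int_a^te^{8N_a}\le e^{8K}(T-a)$, and the exclusion of $u^\theta(a)\equiv0$ via no-swirl regularity. One small remark: $V_4>0$ on $[a,T)$ needs neither backward uniqueness nor ``positivity'' (the latter does not apply to a sign-changing $u^\theta$), because if $u^\theta(t_1)\equiv0$ for some $t_1<T$, then forward uniqueness and no-swirl global regularity from $t_1$ already contradict singularity at $T$, exactly as in your treatment of the cut.
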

\begin{proof}
Otherwise one of the sufficient conclusions of
\cref{thm:signed-source-intro,cor:sourced-enstrophy-intro} would give
smooth continuation through $T$.
\end{proof}

\begin{corollary}\label{cor:radial-record-necessary}
Assume the hypotheses of \cref{prop:radial-floor} at a regular cut
$a<T$, and suppose that $T$ is the first singular time. There are
regular times $t_n\uparrow T$ with $V_4(t_n)\to\infty$ and
$V_4'(t_n)>0$. Along such a sequence,
\[
 \left\|\left(\frac{u^r(t_n)}r\right)_-\right\|_\infty
 \ge\frac{\nu C_{\rm rad}}{M_0^2\|u(a)\|_{L^2(\mathbb R^3)}^2}
 V_4(t_n)\longrightarrow\infty.
\]
For each fixed $R>0$, the fraction of quartic swirl mass in
$\{r\ge R\}$ tends to zero along the sequence.
\end{corollary}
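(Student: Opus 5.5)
The plan has two parts: first show that $V_4$ must blow up, which yields the record times $t_n$; then read off the inflow bound from \cref{cor:radial-compression-tax} and the tail fraction from \eqref{eq:quartic-radial-tail}.

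\emph{Step 1: $V_4$ is unbounded near $T$.} If $V_4$ stayed bounded on $[a,T)$, then $\int_a^tV_4(q)^2\,dq\le C(t-a)=o(s(t))$. \Cref{thm:quartic-dose-intro} would then give continuation, contradicting the choice of $T$. Hence $\limsup_{t\uparrow T}V_4(t)=\infty$.

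\emph{Step 2: record times.} $V_4$ is $C^1$ on every compact subinterval of $[a,T)$. Pick levels $L_n\uparrow\infty$ with $L_n>\max_{[a,a+\delta]}V_4$. Let $t'_n$ be the first time after $a$ at which $V_4=L_n$; it exists by Step 1 and continuity. Then $V_4<L_n$ on $[a,t'_n)$, so $\int_{t'_n-h}^{t'_n}V_4'>0$ for small $h$. Hence there is a $t_n\in(t'_n-h,t'_n)$ with $V_4'(t_n)>0$ and $V_4(t_n)\ge L_n-1$. Since $t'_n$ increases to $T$ (otherwise $V_4$ would be unbounded on a compact regular interval), we obtain $t_n\uparrow T$, $V_4(t_n)\to\infty$ and $V_4'(t_n)>0$. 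This bookkeeping is the only delicate point, and it is routine.

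\emph{Step 3: the inflow lower bound.} Apply \eqref{eq:radial-compressor-threshold} at $t_n$:
\[
\|U_-(t_n)\|_\infty\ge\frac{\nu C_{\rm rad}V_4(t_n)}{M_0^2E_\theta(t_n)}.
\]
Bound $E_\theta(t)=\int|v|^2\,d\mu$ by the kinetic energy: with the reduced measure (angular factor $2\pi$ suppressed), $E_\theta(t)\le\|u(t)\|_{L^2(\R^3)}^2\le\|u(a)\|_{L^2(\R^3)}^2$ by the energy inequality. The convention is harmless, since $2\pi E_\theta\le\|u\|_2^2$ only improves the constant. This gives the displayed bound, and the right side tends to infinity by Step 2.

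\emph{Step 4: the tail fraction.} By \eqref{eq:quartic-radial-tail} and the same energy bound,
\[
\frac{\int_{r\ge R}|v(t_n)|^4\,d\mu}{V_4(t_n)}\le\frac{M_0^2\|u(a)\|_2^2}{R^2V_4(t_n)}\to0.
\]
The circulation bound $|rv|\le M_0$ holds for all later times by the maximum principle for \eqref{eq:Gamma}, as used in \cref{prop:radial-floor}.
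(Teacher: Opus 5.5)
Your proposal is correct and follows the paper's proof essentially step for step: unboundedness of $V_4$ via \cref{thm:quartic-dose-intro}, upcrossing times with $V_4'>0$, then \eqref{eq:radial-compressor-threshold} with the energy bound $E_\theta\le\|u(a)\|_{L^2(\mathbb R^3)}^2$, and \eqref{eq:quartic-radial-tail} divided by $V_4(t_n)$. Your first-hitting-time construction spells out the upcrossing step that the paper states in one line.
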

\begin{proof}
If $V_4$ were bounded on $[a,T)$, then
$\int_a^TV_4(t)^2\,dt<\infty$, which meets the sufficient condition
of \cref{thm:quartic-dose-intro}, a contradiction. Thus $V_4$ is
unbounded, and its continuity and differentiability on each regular
compact interval give arbitrarily high upcrossing times with
$V_4'>0$. Apply \eqref{eq:radial-compressor-threshold} and the
physical energy bound
$E_\theta(t)\le\|u(a)\|_{L^2(\mathbb R^3)}^2$.
The radial-mass assertion follows from
\eqref{eq:quartic-radial-tail} after division by $V_4(t_n)$.
\end{proof}

These conditions are necessary, not sufficient for singularity.  At equality in \eqref{eq:nec-compression} or \eqref{eq:nec-angular}, the endpoint logarithmic continuation allowances of \cref{thm:cumulative-compression-intro,thm:cumulative-M-intro} must also fail.

A singular solution must also violate the hypotheses of each applicable regularity theorem: the Type-I and Liouville criteria \cite{CSTY2008,KNSS2009,SereginSverak2009}, logarithmic circulation criteria \cite{LeiZhang2017,Wei2016}, slightly supercritical meridional criteria \cite{Pan2016}, angular Prodi--Serrin criteria \cite{ChenFangZhang2017}, the one-sided radial Type-I criterion \cite{Zhang2026}, and the Euler-length criteria \cite{CIVEulerLength2026}.  These exclusions do not, without further hypotheses, imply multiscale source ancestry.

None of our direct criteria prescribes a near-axis modulus for $\Gamma$.
They bound cumulative quantities rather than pointwise values.  In particular,
\cref{thm:source-inflow-intro} controls only the radial velocity generated after
$a$, not the total radial velocity in Q.~S.~Zhang's criterion.  No inclusion
between our criteria and the Euler-length criteria of
Constantin--Ignatova--Vicol is claimed.
With $\|\Gamma(a)\|_\infty<\infty$, a uniform bound on
$\|r(u^re_r+u^ze_z)\|_\infty$ implies $|u|\le C/r$.
Under the hypotheses of the global $C/r$ criteria
\cite{CSTY2009II,KNSS2009}, it therefore suffices for continuation.
The reverse implication is not asserted: unbounded $r|b|$ at spatial
infinity may occur for smooth, regular fields.

The continuation criteria do not classify every possible terminal
source history. In particular, no global finite-age, source-compactness,
or first-use packing hypothesis is assumed.

\section{Discussion}\label{sec:relation}
For a fixed regular cut $a<T$, use the signed source action $N_a$
defined before \cref{thm:signed-source-intro}, including the single
physical quartic viscous debit. By
\cref{thm:signed-source-intro}, the signed condition (A) below is sufficient
without a global circulation cap in the strong $H^m$ class.  The
source-enstrophy bound in \cref{cor:sourced-enstrophy-intro} yields an
equivalent continuation condition for actual strong histories, but its
unsigned \emph{a priori} estimate is stronger and generally less economical
than the signed criterion (A).  The remaining routes can be written as
\[
 \tag{A}
 \int_a^t e^{8N_a(q)}\,dq
 =o\!\left(\log\frac{T-a}{T-t}\right),
\]
\[
 \tag{B}
 \int_I\sup_{0<r<R,z}(-u^r_{\rm src})_+\,dt
 \lesssim\sqrt{\nu|I|}
 \quad\text{for every terminal }I,
\]
or, on the bounded-record branch, a same-history upper response contradicting
\eqref{eq:signed-response}; and
\[
 \tag{C}
 \text{the complete mixed-residual and detector hypotheses of}
 \ \cref{thm:mixed-prefix-intro}.
\]
(A) or (B) gives regularity.  Condition (C) proves noncontinuation.  A physical full-state Type-II
successor is one possible way to construct such histories, but exact
self-copy or a prescribed relay profile is not required.

The auxiliary estimates in \cref{sec:radial-lorentz} isolate the radial
and axial parts of the single quartic debit, energy-funded source norms,
and the exact source--receiver completed square. The causal phase-aging
identity \eqref{eq:phase-commutator-d18} further isolates the effect of
a changing source history. None supplies a uniform signed bound for
the true terminal adjoint or establishes (A). The newborn circulation-curvature identity
\eqref{eq:newborn-curvature-r96} and the example
\cref{prop:newborn-positive-example-r96} show why a universal
negative newborn-sign hypothesis cannot supply the missing estimate.

The conditional finite-event construction in
\cref{prop:finite-event-cost} also shows that charging each
productive source episode an absolute positive energy cost cannot, by
itself, rule out a singularity.  Its solutions have different initial
data and the source events occur at shrinking initial times; it proves
nothing about repeated events on one actual terminal history.  In particular
the common-datum, scale-relative error condition in (C) remains necessary
for this constructive test.

The weighted-circulation carrier estimates give additional conditional
geometric constraints under their stated hypotheses, but are not a
classification of singular profiles. A hypothetical singular solution
must violate every applicable continuation criterion, including the
classical Type-I, circulation, radial-inflow, and Euler-length criteria.

\par\smallskip\noindent

\appendix
\section{Positive newborn source production}\label{app:positive-newborn-r96}

\begin{proposition}\label{prop:newborn-positive-example-r96}
There exists smooth, finite-energy, divergence-free, axisymmetric
pure-swirl initial data on $\mathbb R^3$, with bounded circulation, for
which the actual source-generated quartic stretching rate satisfies
\[
 \sigma_{\rm src}(0)=0,\qquad \sigma_{\rm src}'(0)>0.
\]
Thus no universal nonpositive sign holds for the newborn source term.
\end{proposition}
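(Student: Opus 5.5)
We must exhibit an explicit pure-swirl datum for which $\sigma_{\rm src}'(0)>0$. The plan is to use the identity in \cref{prop:newborn-curvature-r96}, \eqref{eq:newborn-curvature-r96}, at the cut $a=0$, with the test quantity
\[
 V_4(0)\sigma_{\rm src}'(0)=-2\int\Gamma^2|\nabla\chi|^2\,d\mu+\int\Delta_{-1}(\Gamma^2)\chi^2\,d\mu,
 \qquad -\Delta_5\chi=\partial_z(F_0^2).
\]
The value $\sigma_{\rm src}(0)=0$ is automatic, since $G_{\rm src}(0)=0$ gives $U_{\rm src}(0)=0$. For pure swirl, $G(0)=0$, so the datum is divergence free, axisymmetric and of finite energy as soon as $u_0^\theta$ is smooth, compactly supported (or Gaussian-decaying), and vanishes to order $r$ at the axis. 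This makes $F_0=u_0^\theta/r$ axis-compatible and $\Gamma$ bounded.

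To make the positive term dominate, I would choose $\Gamma^2$ whose $\Delta_{-1}$-curvature is strongly positive where $\chi^2$ lives, while $\Gamma$ is small where $\nabla\chi$ lives. The natural choice is a two-part profile. A source ring $F_0^2$ is concentrated near a thin annulus $r\approx 1$, $|z|\lesssim 1$, with nonzero $\partial_z$. The circulation $\Gamma^2$ itself is then the sum of that ring's contribution and a second, weaker but broad, convex-in-$r$ piece.

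There is an obstruction: $\Gamma^2$ is tied to $F_0^2$, so a separate Ansatz is needed. Take $F_0=\epsilon\phi+\psi$, where $\psi$ is a fixed smooth bump producing the source and $\phi$ is a broad profile with $\Gamma_\phi=r^2\phi$. The function $\chi$ depends on $F_0^2$ quadratically, so another route is simpler: scale-separate instead. Let $F_0=\psi(r,z)$ be a compact ring, and observe that $\chi$ extends far beyond the support of $\psi$, decaying like the five-dimensional dipole kernel $|X|^{-4}$. Outside the support, $\Gamma=0$, so only the support region contributes. On it, the sign of $\Delta_{-1}(\Gamma^2)$ is governed by the profile choice.

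Concretely, I would pick $\Gamma^2=g(r)h(z)$ on the support, where $g$ is strictly convex with $g''-g'/r>0$ there. An example is $g=r^4$ near the axis, for which $\Delta_{-1}r^4=8r^2>0$. Cutting off only far out in $z$ and $r$, $h(z)=e^{-z^2/L^2}$ with $L$ large, makes $h''$ negligible. Then $F_0^2=g h/r^4=h$ near the axis, so $\partial_zF_0^2=h'$ is of size $1/L$, and $\chi$ is of size $L\cdot$ (coarse), with $|\nabla\chi|\sim\chi/L$. Thus the gradient term is $O(L^{-2})$ relative to the curvature term on the region where $g''-g'/r>0$. The danger is the radial cutoff of $g$, where $\Delta_{-1}g$ turns negative. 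I would handle this by taking the radial cutoff scale $R\gg L$, so that $\chi^2$, which decays like the dipole field outside $r\lesssim L$, contributes negligibly there. This comparison of scales, an $L$-dependent and $R$-dependent asymptotic expansion of both integrals, is the main obstacle.

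A cleaner fallback is to evaluate everything for an explicit Gaussian $F_0=e^{-(r^2+z^2)/2}$ times a $z$-asymmetry (e.g.\ $(1+\beta z)$), where $\chi$ is computable via the five-dimensional heat representation $P_5=\int_0^\infty e^{A\Delta_5}dA$ used in the proof of \cref{thm:first-current-intro}. One then reduces the two integrals to one-dimensional integrals in $A$, evaluates them (numerically if necessary, with rigorous error bounds), and checks \eqref{eq:newborn-curvature-necessary-r96} together with its sufficiency, since the slope formula is an identity. Finally, bounded circulation and smoothness of the Gaussian datum are immediate.
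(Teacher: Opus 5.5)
Your framing is sound: $\sigma_{\rm src}(0)=0$ because $U_{\rm src}(0)=0$, pure-swirl data are admissible, and the curvature identity is an exact formula for the slope. The gap is that the proposal never produces data with $\sigma_{\rm src}'(0)>0$, and your concrete construction gives the opposite sign.

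With $\Gamma^2=r^4h(z)$ on $r<R$, you have $F_0^2=h(z)$ throughout $r<R$. The source $\partial_z(F_0^2)=h'(z)$ therefore fills a wide slab. So $\chi$ is not localized to $r\lesssim L$; it is a capacitor-type field with $\chi_z\approx-h$ on the whole slab.

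In $\Delta_{-1}(\Gamma^2)=8r^2h+r^4h''$ the second term is not negligible. Its ratio to the first is about $r^2/L^2$, which is huge at $r\sim R\gg L$, where both integrals live. In the slab approximation, $\chi'=-h$ gives $\int h''\chi^2\,dz=\int h^3\,dz$. The two leading contributions are then
\[
 -2\int\Gamma^2|\nabla\chi|^2\,d\mu+\int r^4h''\chi^2\,d\mu
 \approx-\frac{R^6}{3}\int h^3\,dz+\frac{R^6}{6}\int h^3\,dz
 =-\frac{R^6}{6}\int h^3\,dz .
\]
The positive term you rely on, $\int 8r^2h\chi^2\,d\mu$, is only $O(R^4L^3)$.

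The multiplier form shows this most cleanly. Write $V_4\sigma_{\rm src}'(0)=-\int r^2F_0^4\,\mathcal K(F_0^2)\,dm_5$, where $\mathcal K=-\partial_{zz}(-\Delta_5)^{-1}$ has symbol $\xi^2/(|k|^2+\xi^2)$. When the radial scale greatly exceeds the axial one, $\mathcal K\approx I$, so $V_4\sigma_{\rm src}'(0)\approx-\int r^2F_0^6\,dm_5<0$. The opposite regime $L\gg R$ also brakes: there $\mathcal K(\phi h)\approx-h''\,(-\Delta_4)^{-1}\phi$ with $\phi\ge0$, and $\int h^2h''\,dz<0$. So your product ansatz with nonnegative factors gives newborn braking in both limits.

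The Gaussian fallback is only a plan to compute and hope. At $\beta=0$ the heat representation gives
\[
 \int r^2F_0^4\,\mathcal K(F_0^2)\,dm_5
 =\sqrt\pi\int_1^\infty\frac{s^{-5/2}\,ds}{(2+1/s)^{7/2}(2s+1)}>0,
\]
that is, $\sigma_{\rm src}'(0)<0$. Nothing in the proposal points to a $\beta$ that flips this sign.

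The missing idea is a way to decouple the quartic weight from the source. You noticed this obstruction in your $F_0=\epsilon\phi+\psi$ attempt and then abandoned it. The paper prescribes $F_0^2$ rather than $F_0$: $F_0^2=f_0(r^2)+f_1(r^2)\cos z$ with $f_0>|f_1|$.

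The axially constant background is annihilated by $\mathcal K$. On the single axial mode, $\mathcal K$ acts as a radial resolvent: $\mathcal K(F_0^2)=w_1\cos z$ with $(B+1)w_1=f_1$ and $B=-\partial_r^2-3r^{-1}\partial_r$. Averaging over one period, only the cross term $2f_0f_1\cos z$ of $F_0^4$ survives. This leaves the explicit bilinear integral $-2\pi\int_0^\infty r^5f_0f_1w_1\,dr$. Its sign is made positive by loading the weight $f_0$ (the $x^9$ term) where $f_1w_1<0$.

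Finite energy is then restored with an axial envelope $\eta(\varepsilon z)$. The sign survives as $\varepsilon\downarrow0$ by Plancherel, the bound $\|\xi^2(B+\xi^2)^{-1}\|\le1$, and $\ker B=\{0\}$. Without such a sign-changing oscillation riding on a source-free background, your scale heuristics have no mechanism to escape the $\mathcal K\approx I$ and $\mathcal K\approx0$ braking regimes.
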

\begin{proof}
Put $x=r^2$ and define
\begin{align*}
 w_1(x)&=(x-2)e^{-x},
 & f_1(x)&=(-4x^2+25x-26)e^{-x},\\
 f_0(x)&=(x^9+4x^2+25x+27)e^{-x}.
\end{align*}
These smooth radial functions obey
\[
 f_0+f_1=(x^9+50x+1)e^{-x}>0,\qquad
 f_0-f_1=(x^9+8x^2+53)e^{-x}>0.
\]
Hence $f_0(r^2)+f_1(r^2)\cos z>0$. Let
$B=-\partial_r^2-3r^{-1}\partial_r$ be the nonnegative radial
Laplacian on $H=L^2((0,\infty),r^3\,dr)$. Direct differentiation gives
\[
 (B+1)w_1(r^2)=f_1(r^2).
\]
For the formal $2\pi$-periodic profile $F^2=f_0+f_1\cos z$,
the multiplier $\mathcal K=-\partial_{zz}(-\Delta_5)^{-1}$ therefore
satisfies $\mathcal K(F^2)=w_1\cos z$.
Since $u^\theta=rF$, the source-born derivative is
$V_4\sigma_{\rm src}'(0)=-\int r^2F^4\mathcal K(F^2)\,dm_5$.
Integrating over one axial period gives
\[
 V_4\sigma_{\rm src}'(0)
  =-2\pi\int_0^\infty r^5 f_0(r^2)f_1(r^2)w_1(r^2)\,dr.
\]
All integrals are elementary after $x=r^2$. In particular,
\begin{align*}
 \int_0^\infty r^5f_0f_1w_1\,dr
 &=\frac{12908}{729}-\frac{7884800}{59049}
 =-\frac{6839252}{59049}<0,\\
 \int_0^\infty r^5f_1w_1\,dr&=\frac78.
\end{align*}
The periodic newborn numerator is consequently $2\pi\cdot 6839252/59049>0$ in the
reduced measure. Periodicity itself does not give finite energy in
$\mathbb R^3$, so we next localize the axial variable.

Choose $0\le\eta\in C_c^\infty(\mathbb R)$ with $\eta\not\equiv0$;
for $\varepsilon>0$ define
\[
 F_\varepsilon(r,z)=\eta(\varepsilon z)
  \sqrt{f_0(r^2)+f_1(r^2)\cos z},\qquad
 u_{0,\varepsilon}=rF_\varepsilon\,e_\theta.
\]
This datum is smooth in Cartesian coordinates, divergence free,
axis-compatible, and rapidly decaying, hence belongs to every
$H^m(\mathbb R^3)$. Its circulation $r^2F_\varepsilon$ is bounded.
Local strong well-posedness provides the corresponding genuine
Navier--Stokes evolution on a positive time interval, so the
source-generated derivative at $t=0$ is well defined.

It remains to check that localization preserves the sign. Write
$h=\eta^2$, $h_\varepsilon(z)=h(\varepsilon z)$ and
$K_\xi=\xi^2(B+\xi^2)^{-1}$ on $H$. For all real $\xi$,
$\|K_\xi\|_{H\to H}\le1$. The only radial solutions of $Bf=0$
are $f=A+Cr^{-2}$, so $\ker B=\{0\}$ in $H$. Consequently,
$K_\xi f_0\to0$ in $H$ as $\xi\to0$;
$K_\xi f_1\to K_1f_1=w_1$ as $\xi\to\pm1$.
Plancherel, with the substitutions $\xi=\varepsilon\zeta$
and $\xi=\pm1+\varepsilon\zeta$, and dominated convergence against
$|\widehat h(\zeta)|^2$, give
\begin{align*}
 \sqrt\varepsilon\,\|\mathcal K(h_\varepsilon f_0)\|_{L^2(dm_5)}&\longrightarrow0,\\
 \sqrt\varepsilon\,\|\mathcal K(h_\varepsilon f_1\cos z)
             -h_\varepsilon w_1\cos z\|_{L^2(dm_5)}&\longrightarrow0.
\end{align*}
Here the products use radial functions of $r^2$; the notation is
suppressed for readability. Meanwhile
$\|r^2 F_\varepsilon^4\|_{L^2(dm_5)}=O(\varepsilon^{-1/2})$.
By Cauchy--Schwarz, all multiplier errors vanish after multiplying
the newborn numerator by $\varepsilon$. The remaining periodic factor
averages in the slowly varying axial envelope, so
\begin{align*}
 \lim_{\varepsilon\downarrow0}
 \varepsilon V_{4,\varepsilon}\sigma_{\rm src}'(0)
 &=-\left(\int_{\mathbb R}\eta(Z)^6\,dZ\right)
    \int_0^\infty r^5 f_0f_1w_1\,dr\\
 &=\frac{6839252}{59049}\int_{\mathbb R}\eta(Z)^6\,dZ>0.
\end{align*}
Thus some sufficiently small $\varepsilon>0$ gives
$\sigma_{\rm src}'(0)>0$ for actual smooth finite-energy whole-space
data. This sign statement concerns the infinitesimal source-generated
quartic work only, not the full viscous debit or a singularity.
\end{proof}

\end{document}